\newtheorem{definition}{Definition}[section]
\newtheorem{theorem}{Theorem}[section]
\newtheorem{corollary}{Corollary}[section]
\newtheorem{proposition}{Proposition}[section]
\newtheorem{lemma}{Lemma}[section]
\newtheorem{remark}{Remark}[section]
\newcounter{mnotecount}[section]
\renewcommand{\themnotecount}{\thesection.\arabic{mnotecount}}
\newcommand{\mnote}[1]
{\protect{\stepcounter{mnotecount}}$^{\mbox{\footnotesize
$
\bullet$\themnotecount}}$ \marginpar{
\raggedright\em
$\!\!\!\!\!\!\,\bullet$\themnotecount: #1} }
\author{Jacques Smulevici\footnote{Laboratoire de Math\'ematiques, Universit\'e Paris-Sud 11, b\^at. 425, 91405 Orsay, France.}}
\title{Small data solutions of the Vlasov-Poisson system and the vector field method}
\begin{document}
\maketitle

\begin{abstract}
The aim of this article is to demonstrate how the vector field method of Klainerman can be adapted to the study of transport equations. After an illustration of the method for the free transport operator, we apply the vector field method to the Vlasov-Poisson system in dimension $3$ or greater. The main results are optimal decay estimates and the propagation of global bounds for commuted fields associated with the conservation laws of the free transport operators, under some smallness assumption. Similar decay estimates had been obtained previously by Hwang, Rendall and Vel\'azquez using the method of characteristics, but the results presented here are the first to contain the global bounds for commuted fields and the optimal spatial decay estimates. In dimension $4$ or greater, it suffices to use the standard vector fields commuting with the free transport operator while in dimension $3$, the rate of decay is such that these vector fields would generate a logarithmic loss. Instead, we construct modified vector fields where the modification depends on the solution itself. 

The methods of this paper, being based on commutation vector fields and conservation laws, are applicable in principle to a wide range of systems, including the Einstein-Vlasov and the Vlasov-Nordstr\"om system. 
\end{abstract}
\tableofcontents
\section{Introduction}
A standard approach to the study of asymptotic stability of stationary solutions of non-linear evolution equations consists in an appropriate linearization of the system together\footnote{A third ingredient not needed in the present case is that of modulation theory, see for instance \cite{lmr:ossgm} for an application of modulation theory in the context of the Vlasov-Poisson system.} with
\begin{enumerate}
\item a robust method for proving decay of solutions to the linearized equations,
\item an appropriate set of estimates for the non-linear terms of the original system, using the linear decay estimates obtained previously. 
\end{enumerate}
For systems of non-linear wave equations such as the Einstein vacuum equations $Ric(g)=0$, several methods for proving decay of solutions to the linear wave equation $\square \psi=0$ where $\displaystyle{\square=-\partial_t^2+\sum_{i=1}^n\partial_{x^i}^2}$ is the wave operator of the flat Minkowski space\footnote{In case of perbutations around a non-flat solution with metric $g$, the operator $\square$ would naturally be replaced by $\square_g$, the wave operator of the metric $g$.} are a priori available. One of the classical methods to derive decay estimates is to use an explicit representation of the solutions, such as the Fourier representation, together with specific estimates for singular or oscillatory integrals. While this method provides very precise estimates on the solutions, it does not seem sufficiently robust to be applicable to quasilinear system of wave equations such as the Einstein equations, and the method of choice\footnote{More recently, a mix of microlocal and vector field methods have also been successfully developped, in particular to handle complex geometries involving trapped trajectories, see for instance \cite{drsr:diii} for an application of these tools.} for proving decay in view of such applications is the commutation vector field method of Klainerman \cite{sk:udelicw} and its extensions using multiplier vector fields, see for instance \cite{cm:lap, cm:tdkg, dr:npsdw}. The method of Klainerman is based on 
\begin{enumerate}
\item A coercive conservation law: the standard energy estimate in the case of the wave equation. 
\item Commutation vector fields: these are typically associated with the symmetries of the equations. In the case of the wave equation, these are the Killing and conformal Killing fields of the Minkowski space. 
\item Weighted vector field idendities and weighted Sobolev inequalities: the usual vector fields $\partial_t, \partial_{x^i}$ are rewritten in terms of the commutation vector fields. The coefficients involved in these decompositions contain weights in $t$ and $|x|$ and the presence of these weights leads to weighted Sobolev inequalities, that is to say decay estimates.
\end{enumerate}

The typical method used in the study of the Vlasov-Poisson and other systems of transport equations such as the Vlasov-Nordstr\"om system is the method of characteristics. This is an explicit representation of the solutions and thus, in our opinion, should be compared with the Fourier representation for solutions of the wave equation. What would then be the analogue of the vector field method for transport equations? 
The aim of this article is twofold. First, we will provide a vector field method for the free transport operator. In fact, in a joint work with J. Joudioux and D. Fajman, we have developped a vector field approach to decay of averages not only for the free (non-relativistic) transport operator but also for the massive and massless relativistic transport operators, see \cite{fjs:vfmrgt}. In this paper, we will give two different proofs of Klainerman-Sobolev inequalities. The easier proof will give us a decay estimate for velocity averages of sufficiently regular distribution functions, i.e. quantities such as $\int_{v \in \mathbb{R}^n} f(t,x,v)dv$. However, this proof fails in the case of velocity averages of absolute values of distribution functions, i.e. quantities such as $\int_{v \in \mathbb{R}^n} |f|(t,x,v)dv$, because higher derivatives of $|f|$ will typically not lie in $L^1$ even if $f$ is in some high regularity Sobolev space. On the other hand, the decay estimate obtained via the method of characteristics can be applied equally well to $f$ and $|f|$. We shall therefore give a second proof of Klainerman-Sobolev inequalities for velocity averages which will be applicable to absolute values of regular distribution functions. The first approach, which is closer to the standard proof of the Klainerman-Sobolev inequality for wave equations, consists essentially of two steps, a weighted Sobolev type inequality for functions in $L^1_x$ and an application of this inequality to velocity averages, exploiting the commutation vector fields. The improvement in the second approach comes from mixing the two steps together. 

In the second part of this paper, we will apply our method to the Vlasov-Poisson system in dimension $n \ge 3$
\begin{eqnarray} \label{eq:vp1}
\partial_t f + v.\nabla_x f + \mu \nabla_x \phi . \nabla_v f &=&0, \\
\Delta \phi&=& \rho(f), \label{eq:vp2} \\
f(t=0)&=&f_0.\label{eq:vp3}
\end{eqnarray}
where $\mu=\pm 1$, $\displaystyle{\Delta=-\sum_{i=1}^n \partial^2_{x^i}}$, $f=f(t,x,v)$ with $t \in \mathbb{R}$, $x,v \in \mathbb{R}^n$, $f_0$ is a sufficiently regular function of $x,v$ and $\rho(f)$ is given by 
$$
\rho(f)(t,x):= \int_{v \in \mathbb{R}^n} f(t,x,v) d^n v.
$$

 Our main result can be summarized as follows (a more precise version is given in Section \ref{se:mr}).
 \begin{theorem}\label{th:mrsf}
Let $n \ge 3$ and $N \ge \frac{5n}{2}+2$ if $n \ge 4$ and $N \ge 14$ if $n=3$. Let $0< \delta < \frac{n-2}{n+2}$. Then, there exists $\epsilon_0 > 0$ such that for all $0 \le \epsilon \le \epsilon_0$, if 
$E_{N,\delta}[f_0] \le \epsilon$, where $E_{N,\delta}[f_0]$ is a norm\footnote{See Section \ref{se:tn} for a precise definition of the norms. The $\delta$ encodes some additional integrability properties of the solutions.} containing up to $N$ derivatives of $f_0$, then the classical solution $f(t,x,v)$ of \eqref{eq:vp1}-\eqref{eq:vp3} exists globally in time\footnote{Under some mild conditions on the initial data, global existence is already guaranteed from the works \cite{kp:gcvp, lp:pmrvp}, so the main points of the theorem, apart from providing an illustration of our new method, are the propagation of the global bounds and the optimal space and time decay estimates for the solutions.}  and satisfies the estimates, $\forall t \in \mathbb{R}$ and $\forall x \in \mathbb{R}^n$,
\begin{enumerate}
\item{\emph{Global bounds}}
\begin{equation}\label{ineq:gb}
\quad E_{N,\delta}[f](t) \le 2 \epsilon.
\end{equation}
\item{\emph{Space and time pointwise decay of averages of $\rho(f)$}}
\begin{align} 
\text{for any multi-index\,\,}&\alpha\text{\,\,with\,\,} |\alpha| \le N-n,\nonumber\\
&|\rho( Z^\alpha f)(t,x) | \le \frac{C_{N,n,\delta} \epsilon}{\left(1+|t|+|x| \right)^{n}},\nonumber
\end{align}
where $Z^\alpha$ is a differential operator of order $\alpha$ obtained as a combination of $|\alpha|$ commuting vector fields and $C_{N,n,\delta} > 0$ is a constant depending only on $N,n,\delta$. 
\item{\emph{Improved decay estimates for derivatives of $f$}}
\begin{align}
\text{for any multi-index\,\,}&\alpha\text{\,\,with\,\,} |\alpha| \le N-n,\nonumber\\
&|\rho( \partial_{x}^\alpha f)(t,x) | \le \frac{C_{N,n,\delta} \epsilon}{\left(1+|t|+|x| \right)^{n+|\alpha|}}.\nonumber 
\end{align}
\item{\emph{Boundedness of the $L^{1+\delta}$ norms of $\nabla^2 \phi$ and $\nabla^2 Z^\alpha \phi$}}
$$\text{for any multi-index $\alpha$ with $|\alpha| \le N$}, \quad ||\nabla^2 Z^\alpha \phi(t)  ||_{L^{1+\delta}(\mathbb{R}^n)} \le C_{N,n,\delta} \epsilon. $$
\item{\emph{Space and time decay of the gradient of the potential and its derivatives}}
\begin{align}
\text{for any multi-index $\alpha$ with\,\,}& |\alpha| \le N-(3n/2+1), \nonumber \\
&| \nabla Z^\alpha \phi (t,x) | \le \frac{C_{N,n,\delta} \epsilon}{t^{(n-2)/2}\left(1+|t|+|x| \right)^{n/2}},\nonumber
\end{align}
as well as the improved decay estimates
$$ 
|\partial^\alpha_x \nabla \phi (t,x) | \le \frac{C_{N,n,\delta} \epsilon}{t^{(n-2)/2}\left(1+|t|+|x| \right)^{n/2+|\alpha|}}.
$$
\end{enumerate}
\end{theorem}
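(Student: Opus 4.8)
The plan is a continuity (bootstrap) argument organised around the single coercive conservation law available for transport equations: the invariance of the $L^1_{x,v}$ norm of $f$ (and, modulo source terms, of its commuted versions) under the characteristic flow.

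\textbf{Commutation algebra and decay of velocity averages.} I would first record the first order operators commuting with the free transport operator $T_0=\partial_t+v\cdot\nabla_x$: the translations $\partial_{x^i}$, the rotations, the scaling field, and, crucially, the Galilean boosts $X_i=t\,\partial_{x^i}+\partial_{v^i}$, for which one checks directly that $[X_i,T_0]=0$. The algebraic point, exactly as for the wave equation, is the identity $t\,\partial_{x^i}=X_i-\partial_{v^i}$, which lets one trade a $t$-weighted $x$-derivative for an element of the commuting family $\{Z\}$ at the cost of a factor $(1+|t|+|x|)^{-1}$. Combined with the Klainerman--Sobolev inequality for velocity averages proved in the first part of the paper — bounding $|\rho(g)(t,x)|$ by $(1+|t|+|x|)^{-n}\sum_{|\alpha|\le n}\|Z^\alpha g(t)\|_{L^1_{x,v}}$ — this reduces items 2 and 3 of the theorem to the global bound, item 1: applying the inequality to $g=Z^\beta f$ gives item 2, and item 3 follows after expressing $t^{|\alpha|}\partial_x^\alpha f$ in terms of the $X_i$'s and $\partial_{v^j}$'s and integrating by parts in $v$ (the $\partial_v$-derivatives integrate to zero in $\rho$, and in the region $|x|\gg t$ the $v$-support/decay properties encoded in the norm make $\rho$ negligible).

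\textbf{Elliptic estimates for $\phi$ and the role of $\delta$.} From $\Delta\phi=\rho(f)$ the Hessian $\nabla^2\phi$ is a matrix of Riesz transforms of $\rho(f)$, hence bounded on $L^{1+\delta}_x$ for every $\delta>0$; commuting with the $Z$'s and using that $\|\rho(Z^\alpha f)(t)\|_{L^{1+\delta}_x}$ interpolates between the conserved $\|Z^\alpha f(t)\|_{L^1_{x,v}}$ and the pointwise decay above yields item 4. For $\nabla\phi$ I would use the representation by the Newtonian potential and split the integral into the regions $|x-y|\le\tfrac12(1+|t|+|x|)$ and its complement, controlling the near part by the $L^{1+\delta}_x$ bound on $\rho(f)$ and the far part by its pointwise decay; this produces the anisotropic rate $t^{-(n-2)/2}(1+|t|+|x|)^{-n/2}$ of item 5, with the improved version obtained as for item 3. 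Keeping track of which Lebesgue exponents make these interpolation and splitting arguments close is precisely what imposes $0<\delta<\frac{n-2}{n+2}$.

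\textbf{Commuted equations, energy estimate, and modified fields.} Writing $T=T_0+\mu\,\nabla_x\phi\cdot\nabla_v$ and commuting with $Z^\alpha$ gives $T(Z^\alpha f)=\sum(\text{derivatives of }\nabla_x\phi)\cdot(\text{weighted }\nabla_v\,Z^\gamma f)$ with $|\gamma|\le|\alpha|$; since $\|Z^\alpha f(t)\|_{L^1_{x,v}}$ evolves only through this source, Gr\"onwall bounds it by $\exp\big(\int_0^t(\text{size of the }\nabla_x\phi\text{-factors})\,ds\big)$. In dimension $n\ge4$ the worst factor, coming from the boosts, behaves like $t\,\|\nabla^2\phi(s)\|\sim s\cdot s^{-n/2}$, which is integrable, so the bootstrap closes with the standard fields. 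In $n=3$ this factor is only $\sim s^{-1}$ near $|x|\sim s$ and generates a logarithmic loss; I would remove it by replacing each boost $X_i$ by a modified field $Y_i=X_i+c_i^{\,j}(t,x,v)\,\partial_{v^j}$ whose coefficient solves $T c_i^{\,j}=\mu\,t\,\partial_{x^i}\partial_{x^j}\phi$ along the characteristics of $T$, so that the dangerous commutator term cancels exactly (the leftover $c_i^{\,j}\partial_{x^j}f$ is harmless); one then shows the $c_i^{\,j}$ stay bounded using the $L^{1+\delta}_x$ and pointwise bounds on the derivatives of $\phi$. The norm $E_{N,\delta}$ is accordingly built from these solution-dependent fields, and the estimates for $f$ and for the $c_i^{\,j}$'s must be propagated simultaneously. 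Finally, on a maximal interval on which $E_{N,\delta}[f](t)\le 2\epsilon$, the steps above deliver all the stated pointwise decay with constants proportional to $\epsilon$; feeding them back into the commuted energy estimate gives $E_{N,\delta}[f](t)\le\epsilon+C\epsilon^2<2\epsilon$ for $\epsilon$ small, so the interval is open, closed and non-empty, hence equal to $\mathbb{R}$, and global existence follows via a standard continuation criterion. The genuinely delicate point is the three-dimensional construction: the modified fields must be chosen so that (i) their correction coefficients are themselves controlled at top order by quantities already present in $E_{N,\delta}$, and (ii) the modified norm stays coercive and compatible with the Klainerman--Sobolev inequality — it is this closed loop, rather than any single inequality, that dictates the thresholds ($N\ge 14$, $\delta<\frac{n-2}{n+2}$). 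For $n\ge4$ the only real work is the bookkeeping of the derivative hierarchy, the pointwise bounds costing about $n$ derivatives relative to the energy, whence $N\ge\frac{5n}{2}+2$.
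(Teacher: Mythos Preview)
Your overall architecture---bootstrap on $E_{N,\delta}$, Klainerman--Sobolev for velocity averages, elliptic estimates for $\phi$, and modified vector fields in $3$d---matches the paper. For $n\ge 4$ the sketch is essentially correct; the one methodological difference is that the paper obtains the pointwise decay of $\nabla Z^\alpha\phi$ not by splitting the Newtonian integral but by first proving an $L^2$ decay estimate $\|\nabla Z^\alpha\phi\|_{L^2}\lesssim \epsilon\,t^{-(n-2)/2}$ (multiply the commuted Poisson equation by $Z^\alpha\phi$ and use Gagliardo--Nirenberg) and then applying an $L^2$-based Klainerman--Sobolev inequality. Either route is viable. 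The constraint $\delta<\tfrac{n-2}{n+2}$ in the paper enters specifically so that the exponent $q=\tfrac{n(1+\delta)}{n-(1+\delta)}$ in the Gagliardo--Nirenberg bound on $\nabla Z^\gamma\phi$ is strictly below $2$; this makes the dual exponent on $\rho(|Z^\beta f|)$ large enough that the resulting time decay beats the extra factor of $t$ coming from rewriting $\partial_v$.

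There is, however, a genuine gap in your three-dimensional modification. You set $Y_i=X_i+c_i^{\,j}\partial_{v^j}$ with $T_\phi c_i^{\,j}=\mu t\,\partial_{x^i}\partial_{x^j}\phi$; the commutator computation then gives $[T_\phi,Y_i]f=-c_i^{\,j}\partial_{x^j}f$, and you declare this ``harmless''. It is not: the coefficients $c_i^{\,j}$ are bounded (of size $\epsilon$) but carry \emph{no} decay in $t$, and the leftover term contains no $\partial\phi$-factor whatsoever. In the $L^1_{x,v}$ estimate you therefore pick up $\int_0^t \|c_i^{\,j}\partial_{x^j}f\|_{L^1}\,ds\sim \epsilon^2 t$, which destroys the bootstrap. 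The paper's modification is in the \emph{spatial} direction: $Y_i=X_i-\Phi_i^{\,j}\partial_{x^j}$ with $T_\phi\Phi_i^{\,j}=\mu t\,\partial_{x^j}(t\partial_{x^i}\phi)$. The point of this choice is that after cancellation every surviving commutator term still carries a factor of $\partial_x Z\phi$ (possibly multiplied by powers of the $\Phi$'s), and it is this factor, decaying like $t^{-2}$, that makes the sources time-integrable. The price is that the $\Phi_i^{\,j}$ grow logarithmically, which forces further work you do not mention: a Klainerman--Sobolev inequality adapted to the $Y$'s, product estimates for $Y^\alpha(\varphi)\,Y^\beta f$ obtained by writing a transport equation for the product (since $Y^\alpha\varphi$ alone is not in $L^p_{x,v}$), and a careful structural analysis of the top-order terms so that one never needs more than $N$ derivatives on $\phi$. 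These are the steps that actually determine $N\ge 14$.
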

\begin{remark}Stronger bounds can be propagated by the equations provided the data enjoy additional integrability conditions. More precisley, the improved decay estimates for derivatives of $\rho(f)$ can be improved to 
$$
|\rho( \partial_t^\tau \partial_{x}^\alpha f)(t,x) | \le \frac{C_N \epsilon}{\left(1+|t|+|x| \right)^{n+|\alpha|+\tau}}
$$
and the improved decay estimates for derivatives of the gradient of $\phi$ can be improved to 
$$
|\partial^\tau_t\partial^\alpha_x \nabla \phi (t,x) | \le \frac{C_N \epsilon}{t^{(n-2)/2}\left(1+|t|+|x| \right)^{n/2+|\alpha|+\tau}},
$$
the point being that additional $t$ derivatives now bring additional decay in $t$ and $|x|$. These stronger estimates hold provided the initial data have stronger decay in $x,v$ than what is needed in the proof of Theorem \ref{th:mrsf}. Similarly, one can propagates $L^p$ norms with $p \ge 2$ for $\nabla \phi$ and $\nabla Z^\alpha \phi$ provided additional $v$ decay of the initial data is assumed.
\end{remark}
\begin{remark}
Similar time decay estimates have been obtained in \cite{hrv:ogeabvp} for derivatives of $\rho(f)$ and $\phi$ using the method of characteristics under different assumptions on the initial data. On the other hand, the optimal decay rates in space and the propagation of the global bounds \eqref{ineq:gb} were, as far as we know, not known prior to our work. 
\end{remark}

\begin{remark}
As is clear from the proof below and is typical of strategies based on commutation formulae and conservation laws, the method is very robust. In particular, we are not using the method of characteristics, nor the conservation of the total energy for the system \eqref{eq:vp1}-\eqref{eq:vp3}. An illustration of this robustness will be given in \cite{fjs:vfmrgt} where we will apply a similar approach to the study of the Vlasov-Nordstr\"om system.
\end{remark}

\subsection*{Previous work on the Vlasov-Poisson system and discussion}
There exists a large litterature on the Vlasov-Poisson system. We refer to the introduction in \cite{cm:sb} for a good introduction to the subject and only quote here the most important results from the point of view of this article. In the pioneered work \cite{bd:gevp}, small data global existence in dimension $3$ for the Vlasov-Poisson system was established together with optimal time decay rates for $\rho(f)$ and $\nabla \phi$ but no decay was obtained for their derivatives. The optimal time (but not spatial) decay rates for derivatives of $\rho(f)$ and $\nabla \phi$ has been only much later obtained in \cite{hrv:ogeabvp}, covering at the same time all dimensions $n \ge 3$. Both these works use decay estimates obtained via the method of characteristics. In fact, in \cite{bd:gevp} and even more in \cite{hrv:ogeabvp}, precise estimates on the deviation of the characteristics from the characteristics of the free transport operator are needed in order to obtain the desired decay estimates. Parallely to these works giving information on the asymptotics of small data solutions, let us mention that under fairly weak assumption on the initial data (in particular, no smallness assumption is needed), it is known that global existence holds in dimension $3$ for the solutions of \eqref{eq:vp1}-\eqref{eq:vp3}, see \cite{kp:gcvp, lp:pmrvp}. The strongest results concerning the stability of non-trivial stationnary solutions of \eqref{eq:vp1}-\eqref{eq:vp3} with $\mu=-1$ have been obtained in \cite{lmr:ossgm}. They are not based on decay estimates but on a variational characterisation of the stationary solutions. On the other hand, this type of method does not provide asymptotic stability of the solutions but orbital stability. It is likely that any result addressing the question of asymptotic stability will need to go back to an appropriate linearization of the equations combined with robust decay estimates\footnote{See for instance \cite{gr:nvanssd} for some stability results using the linearization approach in the case of the spherically-symmetric King model.}. We believe that, once again, the vector field method would be totally appropriate for the derivation of such decay estimates. Finally, let us mention the celebrated work \cite{mv:ld} on Landau damping concerning the stability of stationnary solutions to \eqref{eq:vp1}-\eqref{eq:vp3} with periodic initial data. In view of the present work, it will be interesting to try to revisit this question using vector field methods. 

\subsection*{Outline of the paper}
In Section \ref{se:pre}, we introduce the vector fields commuting with the free transport operator and the notations that we will use throughout the paper. In Section \ref{se:dafovfm}, we present and prove decay estimates for velocity averages of solutions to the transport equation. In the following section, we present our results on the Vlasov-Poisson system. The remaining last two sections are devoted to the proof of these results, first in dimension $n \ge 4$ and then in dimension $3$ using modified vector fields. 
\subsection*{Acknowledgements}
This project was motivated by my joint work with David Fajman and J\'er\'emie Joudioux on relativistic transport equations. I would like to thank both of them, as well as Christophe Pallard and Fr\'ed\'eric Rousset for many interesting discussions on these topics. I would also like to thank Pierre Rapha\"el for an extremely stimulating conversation which took place during the conference ``Asymptotic analysis of dispersive partial differential equations'' held in October 2014 at Pienza, Italy. Some of this research was done during this conference. Finally, I would like to acknowledge partial funding from the Agence Nationale de la Recherche ANR-12-BS01-012-01 (AARG) and ANR SIMI-1-003-01.
 
\section{Preliminaries} \label{se:pre}
Throughout this article, $f$ will denote a sufficiently regular function of $(t,x,v)$ with $t \in \mathbb{R}$ and $(x,v) \in \mathbb{R}^n \times \mathbb{R}^n$. 
By sufficiently regular, we essentially mean that $f$ is such that all the terms appearing in the equations make sense as distributions and that all the norms appearing in the estimates are finite. For simplicity, the reader might just assume that $f$ is smooth with compact support in $x,v$ (but any sufficient fall-off will be enough). 

We will denote by $T$ the free transport operator i.e. 

$$
T(f):=\partial_tf + \sum_{i=1}^n v^i \partial_{x^i} f,
$$
where $\partial_t f= \frac{\partial f}{\partial t}$ and for all $1\le i \le n$, $\partial_{x^i}f= \frac{\partial f}{\partial x^i}$.
Similarly, for any sufficiently regular scalar function $\phi$, $T_\phi$ will denote the perturbed transport operator

\begin{equation} \label{def:tphiop}
T_\phi (f):= T(f) + \mu \nabla_x \phi. \nabla_v f,
\end{equation}
where $\mu=\pm 1$, corresponding to an attractive or repulsive force. Since we are dealing only with small data solutions, the sign of $\mu$ will play no role in the rest of this article.

The notation $A \lesssim B$ will be used to specify that there exists a universal constant $C>0$ such that $A \le CB$, where typically $C$ will depends only on the number of dimensions $n$ and a few other fixed constants, such as the maximum number of commutations. 
\subsection{Macroscopic and microscopic vector fields}\label{se:mmvf}
Consider first the following set of vector fields 

\begin{itemize}
\item{Translations in space and time}\,\, $\partial_t$, $\partial_{x^i}$,
\item{Uniform motion in one spatial direction\footnote{Recall that these vector fields are the generators of the Gallilean transformations of the form $x \in \mathbb{R}^n \rightarrow x+tv_i$, where $v_i^k=\delta_i^k$.}} \,\, $t\, \partial_{x^i}$,
\item{Rotations}\,\, $x^i \partial_{x^j}-x^j \partial_{x^i}$,
\item{Scaling in space}\,\,  $\displaystyle{\sum_{i=1}^n x^i \partial_{x^i}}$,
\item{Scaling in space and time}\,\, $t \partial_t + \displaystyle{\sum_{i=1}^n x^i \partial_{x^i}}$.
\end{itemize}
The above set of vector fields is associated with the Gallilean invariance of macroscopic fields and equations. We will denote by $\Gamma$ the set of all such vector fields
$$
\Gamma =\left\{\partial_t, \partial_{x^i}, t\, \partial_{x^i}, x^i \partial_{x^j}-x^j \partial_{x^i}, \sum_{i=1}^n x^i \partial_{x^i}, t \partial_t + \sum_{i=1}^n x^i \partial_{x^i}, \,\, 1\le i,j \le n\right\}.
$$


One easily check that while the translations commute with $T$, uniform motions, rotations or the scaling in space do not. The correct replacement for these vector fields is most easily explained using the language of differential geometry; the interested reader may consult \cite{Sarbach:2013vy, fjs:vfmrgt} for detailed constructions (in the case of the relativistic transport operator). We shall here only present the resulting objects which are the vector fields

\begin{itemize}
\item{Uniform motions in one direction in microscopic form} $t \partial_{x^i} + \partial_{v^i}$,
\item{Rotations in microscopic form} $x^i \partial_{x^j}-x^j \partial_{x^i}+ v^i \partial_{v^j}-v^j \partial_{v^i}$,
\item{Scaling in space in microscopic form} $\displaystyle{\sum_{i=1}^n x^i \partial_{x^i}+v^i\partial_{v^i}}$.
\end{itemize}
One can then easily check
\begin{lemma}[Commutation with the transport operator]\hspace{2cm}\label{lem:cp}
\begin{itemize}
\item If $Z$ is any of the translations, microscopic uniform motions or microscopic rotations, then $[T,Z]=0$.
\item If $Z$ is the microscopic scaling in space, then $[T,Z]=0$. 
\item If $Z$ is the scaling in space and time, then $[T,Z]=T$.
\end{itemize}
\end{lemma}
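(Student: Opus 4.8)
The plan is to verify each of the three commutation relations by direct computation, treating $T = \partial_t + \sum_i v^i \partial_{x^i}$ as a first-order differential operator on functions of $(t,x,v)$ and using the elementary identity $[T,Z](f) = T(Z(f)) - Z(T(f))$, which for vector fields $T,Z$ is again a vector field whose coefficients are computed coordinate by coordinate. Since all the $Z$ under consideration have coefficients that are polynomial (indeed affine or linear) in $(t,x,v)$, the computations are short: the only nonzero contributions come from $\partial_t$ or $\partial_{x^i}$ in one operator hitting the $t$- or $x$-dependent coefficients of the other, and from the $v^i\partial_{x^i}$ piece of $T$ hitting $v$-dependent coefficients of $Z$.

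First I would dispose of the translations: $\partial_t$ and $\partial_{x^i}$ have constant coefficients, so they commute with $\partial_t$, with $\partial_{x^j}$, and with $v^j\partial_{x^j}$ trivially, giving $[T,\partial_t]=[T,\partial_{x^i}]=0$. Next, for the microscopic uniform motion $Z = t\partial_{x^i} + \partial_{v^i}$, I would compute $[T,Z]$ term by term: $[\partial_t, t\partial_{x^i}] = \partial_{x^i}$ (the $\partial_t$ differentiates the coefficient $t$), while $[\sum_j v^j\partial_{x^j}, \partial_{v^i}] = -\partial_{x^i}$ (the $\partial_{v^i}$ differentiates the coefficient $v^i$), and all remaining brackets vanish because $t\partial_{x^i}$ has no $v$-dependence that $v^j\partial_{x^j}$ could see and $\partial_{v^i}$ has constant coefficients against $\partial_t$; the two surviving terms cancel, so $[T,Z]=0$. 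The microscopic rotation $Z = x^i\partial_{x^j} - x^j\partial_{x^i} + v^i\partial_{v^j} - v^j\partial_{v^i}$ is handled the same way: $[v^k\partial_{x^k}, x^i\partial_{x^j}]$ produces a $v^i\partial_{x^j}$ term, $[v^k\partial_{x^k}, v^i\partial_{v^j}]$ produces a $-v^i\partial_{x^j}$ term (plus the antisymmetric partner with $i,j$ swapped), and these cancel in pairs, while the $\partial_t$-part contributes nothing since none of the coefficients depend on $t$. For the microscopic scaling $Z = \sum_i (x^i\partial_{x^i} + v^i\partial_{v^i})$, the relevant brackets are $[v^j\partial_{x^j}, x^i\partial_{x^i}] = -v^j\partial_{x^j}$ (summing over $i$, the term with $i=j$ survives) and $[v^j\partial_{x^j}, v^i\partial_{v^i}] = +v^j\partial_{x^j}$, which again cancel, and $[\partial_t, Z] = 0$; hence $[T,Z]=0$.

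Finally, for the spacetime scaling $Z = t\partial_t + \sum_i x^i\partial_{x^i}$, I would compute $[\partial_t, t\partial_t] = \partial_t$, $[v^j\partial_{x^j}, x^i\partial_{x^i}] = v^j\partial_{x^j}$ (again the $i=j$ term in the sum), and $[\partial_t, \sum_i x^i\partial_{x^i}] = 0$, $[v^j\partial_{x^j}, t\partial_t] = 0$ since $v^j\partial_{x^j}$ ignores $t$ and $t\partial_t$ ignores $x$; adding, $[T,Z] = \partial_t + \sum_j v^j\partial_{x^j} = T$, as claimed. Honestly there is no real obstacle here — the statement is a routine Lie-bracket computation — so the only thing to be careful about is bookkeeping: keeping track of which operator's coefficients get differentiated, making sure the summation indices are handled correctly (in particular that $[\,v^j\partial_{x^j},\,\sum_i x^i\partial_{x^i}] = v^j\partial_{x^j}$ and not $n\,v^j\partial_{x^j}$), and noting that the cancellations in the microscopic cases are precisely the reason the $\partial_{v}$-corrections were introduced in the first place. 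One could also phrase the whole thing more conceptually by observing that $T$ is the generator of the geodesic flow on the tangent bundle and that the microscopic vector fields are the complete lifts of the corresponding Galilean symmetries, so that commutation with $T$ reflects the fact that these symmetries preserve the flow; but the coordinate computation above is self-contained and suffices.
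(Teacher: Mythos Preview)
Your proposal is correct: the paper does not give a proof of this lemma at all, merely stating that ``one can then easily check'' the commutation relations, so your direct Lie-bracket computation is exactly the intended verification. One small bookkeeping slip: in the microscopic scaling paragraph you write $[v^j\partial_{x^j}, x^i\partial_{x^i}] = -v^j\partial_{x^j}$ and $[v^j\partial_{x^j}, v^i\partial_{v^i}] = +v^j\partial_{x^j}$, but the signs are reversed (indeed you use the correct sign $[v^j\partial_{x^j}, \sum_i x^i\partial_{x^i}] = +v^j\partial_{x^j}$ a few lines later for the spacetime scaling); since the two terms cancel either way, the conclusion is unaffected.
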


\begin{remark} \label{rk:acvf}
From the two scaling commuting vector fields, it follows automatically that $t\partial_t - \sum_{i=1}^n v^i\partial_{v^i}$ also commutes with $T$ in the sense that 
$$
[ T, t\partial_t - \sum_{i=1}^n v^i\partial_{v^i} ]=T.
$$
This vector field will be used to obtain improved decay for $t$ derivatives of velocity averages.
\end{remark}



To ease the notation, we will denote by $\Omega_{ij}^{x}:=x^i \partial_{x^j}-x^j \partial_{x^i}$ the rotation vector fields in $x$ and by $\Omega_{ij}^{v}:=v^i \partial_{v^j}-v^j \partial_{v^i}$ the rotation vector fields in $v$. The full microscopic rotation vector fields are thus of the form $\Omega_{ij}^{x}+\Omega_{ij}^{v}$. Similarly, we will denote by $S^{x}+S^{v}:=\displaystyle{\sum_{i=1}^n x^i \partial_{x^i}+v^i\partial_{v^i}}$ the scaling in space in microscopic form, with $S^{x}=\displaystyle{\sum_{i=1}^n x^i \partial_{x^i}}$ and $S^{v}=\displaystyle{\sum_{i=1}^n v^i\partial_{v^i}}$.

Let now $\gamma$ be the set of all the above microscopic vector fields including the translations and the space and time scaling i.e. 

$$
\gamma=\left\{ \partial_t, \partial_{x^i}, t\partial_{x^i} + \partial_{v^i}, \Omega_{ij}^{x}+\Omega_{ij}^{v}, S^{x}+S^{v} , t \partial_t + \sum_{i=1}^n x^i \partial_{x^i}, 1\le i,j \le n \right\}.
$$

\subsection{Multi-index notations}

Let $Z^i,i=1,..,2n+3+n(n-1)/2$ be an ordering of $\Gamma$. For any multi-index $\alpha$, we will denote by $Z^\alpha$ the differential operator of order $|\alpha|$ given by the composition $Z^{\alpha_1} Z^{\alpha_2}..$. 

In view of the above discussion, to any vector field of $\Gamma$, we can associate a unique vector field of $\gamma$. More precisely, 
\begin{eqnarray*}
\partial_t &\rightarrow& \partial_t, \\
\partial_{x^i} &\rightarrow& \partial_{x^i},\\
t \partial_{x^i} &\rightarrow& t \partial_{x^i} + \partial_{v^i}, \\
\Omega_{ij}^x &\rightarrow& \Omega_{ij}^x+\Omega_{ij}^v, \\
S^x &\rightarrow& S^x+ S^v, \\
t\partial_t + \sum_{i=1}^n x^i \partial_{x^i} &\rightarrow& t\partial_t + \sum_{i=1}^n x^i \partial_{x^i}.
\end{eqnarray*}
Thus, to any ordering of $\Gamma$ we can associate an ordering of $\gamma$. We will by a small abuse of notation, denote again by $Z^i$ the elements of such an ordering since it will be clear that, if $Z^i$ is applied to a macroscopic quantity, such as a velocity average, then $Z^i \in \Gamma$ and if $Z^i$ is applied to a microscopic quantity, i.e.~any function depending on $(x,v)$ (and possibly $t$), then $Z^i \in \gamma$. Similarly, for any multi-index $\alpha$, we will also denote by $Z^\alpha$ the differential operator of order $|\alpha|$ given by the composition $Z^{\alpha_1} Z^{\alpha_2}..$ obtained from the vector fields of $\gamma$.

For some of the estimates below, it will be sufficient to only consider a subset of all the vector fields of $\Gamma$ and $\gamma$. 
Let us thus denote by $\Gamma_s$ the set of all the macroscopic vector fields apart from $\partial_t$ and $t \partial_t + \displaystyle{\sum_{i=1}^n x^i \partial_{x^i}}$, which are the only vector fields containing time derivatives and by $\gamma_s$ the corresponding set of microscopic vector fields, i.e. 
\begin{eqnarray*}
\Gamma_s&=&\left\{\partial_{x^i}, t\, \partial_{x^i}, x^i \partial_{x^j}-x^j \partial_{x^i}, \sum_{i=1}^n x^i \partial_{x^i},  \, 1\le i,j \le n\right\},\\
\gamma&=&\left\{\partial_{x^i}, t\partial_{x^i} + \partial_{v^i}, \Omega_{ij}^{x}+\Omega_{ij}^{v}, S^{x}+S^{v} , \, 1\le i,j \le n \right\}.
\end{eqnarray*}

The notation $Z^\alpha \in \Gamma^{|\alpha|}_s$ (respectively $Z^\alpha \in \gamma^{|\alpha|}_s$) will be used to denote a generic differential operator of order $|\alpha|$ obtained as a composition of $|\alpha|$ vector fields in $\Gamma_s$ (respectively in $\gamma_s$). The standard notation $\partial_x^\alpha$ will also be used to denote a differential operator of order $|\alpha|$ obtained as a composition of $|\alpha|$ translations among the $\partial_{x^i}$ vector fields. 


The following lemmae can easily be checked.

\begin{lemma}[Commutation within $\Gamma$]
For any $Z^\alpha \in \gamma^{|\alpha|}$, $Z^{\alpha'} \in \gamma^{|\alpha'|}$ , where $\alpha, \alpha'$ are multindices, we have $$[Z^\alpha, Z^{\alpha'}]=\sum_{|\beta|\le |\alpha|+|\alpha'|-1} c_\beta^{\alpha,\alpha'} Z^\beta,$$ for some constant coefficients $c_\beta^{\alpha, \alpha'}$. 
Moreover, if $Z^\alpha, Z^{\alpha'} \in \gamma_s$, then all the $Z^\beta$ of the right-hand side belongs to $\gamma_s$.
\end{lemma}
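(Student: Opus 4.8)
\noindent The plan is to reduce the statement to the single structural fact that the span of $\gamma$ is a Lie algebra with \emph{constant} structure constants, and then to bootstrap from single vector fields to arbitrary compositions by an induction on the total order, using only the elementary identity $[AB,C]=A[B,C]+[A,C]B$ for composed operators.

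First I would settle the case $|\alpha|=|\alpha'|=1$, namely that for any two $Z^i,Z^j\in\gamma$ one has $[Z^i,Z^j]=\sum_k c^{ij}_k Z^k$ with constant $c^{ij}_k$ and the sum over the (first-order) elements of $\gamma$. This is a finite check: modulo the spacetime scaling, the elements of $\gamma$ generate the Galilean Lie algebra, so, e.g., $[\partial_{x^i},\Omega^x_{jk}+\Omega^v_{jk}]=\delta_{ij}\partial_{x^k}-\delta_{ik}\partial_{x^j}$, $[\partial_t, t\partial_{x^i}+\partial_{v^i}]=\partial_{x^i}$, $[t\partial_{x^i}+\partial_{v^i},\Omega^x_{jk}+\Omega^v_{jk}]=\delta_{ij}(t\partial_{x^k}+\partial_{v^k})-\delta_{ik}(t\partial_{x^j}+\partial_{v^j})$, $[t\partial_{x^i}+\partial_{v^i},S^x+S^v]=t\partial_{x^i}+\partial_{v^i}$, $[\partial_{x^i},S^x+S^v]=\partial_{x^i}$, while the brackets of two translations, two microscopic rotations, the two scalings, or $t\partial_t+\sum_i x^i\partial_{x^i}$ against any element of $\gamma$ are either zero or again in the list; running through all pairs gives the claim. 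One records at the same time that if both $Z^i,Z^j$ lie in $\gamma_s$ then every $Z^k$ occurring on the right again lies in $\gamma_s$ --- neither $\partial_t$ nor the spacetime scaling is ever produced --- so the span of $\gamma_s$ is itself a Lie subalgebra.

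Next I would induct on $m=|\alpha|+|\alpha'|$. If $|\alpha|=0$ or $|\alpha'|=0$ the commutator vanishes, and $m=2$ (both of length one) is the previous paragraph. For $m\ge 3$ with both multi-indices nonempty, at least one of them, say $\alpha$, has length $\ge 2$; writing $Z^\alpha=Z^iZ^{\tilde\alpha}$ with $|\tilde\alpha|=|\alpha|-1\ge 1$ we expand
$$[Z^\alpha,Z^{\alpha'}]=Z^i\,[Z^{\tilde\alpha},Z^{\alpha'}]+[Z^i,Z^{\alpha'}]\,Z^{\tilde\alpha}.$$
The pair $(\tilde\alpha,\alpha')$ has total order $m-1$, and the pair $(i,\alpha')$ has total order $1+|\alpha'|=m-|\alpha|+1\le m-1$, precisely because $|\alpha|\ge 2$; hence the inductive hypothesis applies to both brackets on the right, yielding $[Z^{\tilde\alpha},Z^{\alpha'}]=\sum_{|\beta|\le m-2}c_\beta Z^\beta$ and $[Z^i,Z^{\alpha'}]=\sum_{|\delta|\le|\alpha'|}e_\delta Z^\delta$ with constant coefficients. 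Left-composing the first with $Z^i$ and right-composing the second with $Z^{\tilde\alpha}$ turns each summand into an honest $Z^\gamma$ (a composition of vector fields of $\gamma$) of order $\le m-1$, with the coefficients unchanged; summing the two contributions gives $[Z^\alpha,Z^{\alpha'}]=\sum_{|\beta|\le m-1}c^{\alpha,\alpha'}_\beta Z^\beta$, which is the desired bound $|\beta|\le|\alpha|+|\alpha'|-1$. The case where it is $\alpha'$ that has length $\ge 2$ is symmetric. The ``moreover'' clause follows by the same induction restricted to $\gamma_s$: all vector fields peeled off and all brackets taken stay in the span of $\gamma_s$ by the first paragraph, and composing elements of $\gamma_s$ stays within compositions of $\gamma_s$.

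The only real obstacle is bookkeeping: one must make sure the order drops by exactly one at each peeling so that the sharp exponent $|\alpha|+|\alpha'|-1$ is not lost, and that no coefficient depending on $(t,x,v)$ ever appears --- it cannot, since the structure constants of $\gamma$ are constant and the only further operations performed are left and right composition with fixed vector fields.
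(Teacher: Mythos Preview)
Your proof is correct. The paper does not actually supply a proof of this lemma --- it merely states that it ``can easily be checked'' --- so your argument is precisely the routine verification the author has in mind: check that $\gamma$ (respectively $\gamma_s$) spans a finite-dimensional Lie algebra with constant structure coefficients, then propagate to arbitrary compositions by induction on $|\alpha|+|\alpha'|$ via the Leibniz identity $[AB,C]=A[B,C]+[A,C]B$.
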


\begin{lemma}[Commutation of $Z^\alpha$ and weights in $v$] \label{lem:czv}
Let $q>0$. For any sufficienly regular function $f$ of $(t,x,v)$ and for any  $Z^\alpha \in \gamma^{|\alpha|}$ where $\alpha$ is a multi-index, we have
$$
\left| Z^\alpha \left[ (1+v^2)^{q/2} f\right] \right| \lesssim (1+v^2)^{q/2} \sum_{|\beta|  \le |\alpha|} \left| Z^\beta(f) \right|.
$$
Moreover, if $Z^\alpha \in \gamma_s^{|\alpha|}$ then all the $Z^\beta$ belong to $\gamma_s^{|\beta|}$ in the above inequality.
\end{lemma}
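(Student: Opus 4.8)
The plan is to prove the estimate by induction on $|\alpha|$, tracking the precise form of the coefficients that appear. The starting point is a direct computation of $Z\big[(1+v^2)^{q/2}\big]$ for each generator $Z\in\gamma$. One finds that $\partial_t$, $\partial_{x^i}$, the microscopic rotations $\Omega_{ij}^x+\Omega_{ij}^v$, and the space-time scaling $t\partial_t+\sum_i x^i\partial_{x^i}$ all annihilate $(1+v^2)^{q/2}$ (for the rotations because $v^i\partial_{v^j}-v^j\partial_{v^i}$ kills $v^2$), while
\[ (t\partial_{x^i}+\partial_{v^i})\big[(1+v^2)^{q/2}\big]=\frac{q\,v^i}{1+v^2}\,(1+v^2)^{q/2},\qquad (S^x+S^v)\big[(1+v^2)^{q/2}\big]=\frac{q\,v^2}{1+v^2}\,(1+v^2)^{q/2}. \]
In all cases $Z\big[(1+v^2)^{q/2}\big]=g_Z(v)\,(1+v^2)^{q/2}$, where $g_Z$ belongs to the class $\mathcal M$ of functions of $v$ of the form $P(v)/(1+v^2)^m$ with $P$ a polynomial of degree at most $2m$. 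I would record the elementary facts that $\mathcal M\subset L^\infty(\mathbb R^n_v)$, that $\mathcal M$ is an algebra, and — the only point that needs a moment's thought — that $\mathcal M$ is stable under each $\partial_{v^i}$ and under each operator $v^i\partial_{v^j}$; the degree/weight bookkeeping works precisely because the potentially dangerous multiplication by a single factor $v^i$ coming from the rotations or the $v$-scaling is always accompanied by a derivative $\partial_{v^j}$.

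With this in hand I would establish, by induction on $|\alpha|$, the refined identity
\[ Z^\alpha\big[(1+v^2)^{q/2}f\big]=(1+v^2)^{q/2}\sum_{|\beta|\le|\alpha|}g_{\alpha,\beta}(v)\,Z^\beta f,\qquad g_{\alpha,\beta}\in\mathcal M. \]
The base case $|\alpha|=0$ is trivial. For the inductive step, write $Z^\alpha=Z\,Z^{\alpha'}$ with $Z\in\gamma$ and $|\alpha'|=|\alpha|-1$, apply the inductive hypothesis, and then apply $Z$ using the Leibniz rule. The term in which $Z$ falls on $(1+v^2)^{q/2}$ contributes $g_Z\in\mathcal M$ times the old coefficients; the term in which $Z$ falls on a coefficient $g_{\alpha',\beta}(v)$ stays in $\mathcal M$ by the stability properties above, since $Z$ acting on a function of $v$ reduces to one of $0$, $\partial_{v^i}$, $v^i\partial_{v^j}-v^j\partial_{v^i}$, or $\sum_k v^k\partial_{v^k}$; and the term in which $Z$ falls on $Z^\beta f$ is $(1+v^2)^{q/2}g_{\alpha',\beta}(ZZ^\beta f)$, where $ZZ^\beta$ is a composition of $|\beta|+1\le|\alpha|$ vector fields of $\gamma$, hence of the form $Z^\gamma$. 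Collecting the contributions with the same $Z^\delta$ and using that $\mathcal M$ is an algebra yields the identity at order $|\alpha|$. Taking absolute values and bounding $|g_{\alpha,\beta}(v)|\le C_{n,q,|\alpha|}$ via $\mathcal M\subset L^\infty$ gives the stated inequality, the implicit constant depending only on $n$, $q$ and the maximal number of commutations, consistent with the convention for $\lesssim$.

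For the refinement concerning $\gamma_s$, I would simply observe that the induction never enlarges the set of vector fields used: in the inductive step the coefficient terms carry the multi-index $\beta$ unchanged, while the term $ZZ^\beta f$ appends the single vector field $Z$, which lies in $\gamma_s$ whenever $Z^\alpha\in\gamma_s^{|\alpha|}$. Hence, if every generator composing $Z^\alpha$ lies in $\gamma_s$, then every $Z^\beta$ on the right-hand side is a composition of vector fields of $\gamma_s$, i.e.\ $Z^\beta\in\gamma_s^{|\beta|}$, as claimed.

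I do not expect a genuine obstacle here: this is a routine Leibniz-plus-induction argument. The only place requiring care — and the step I would single out — is the choice of the coefficient class $\mathcal M$, which must be stable under repeated application of the microscopic scaling and rotation fields while staying inside $L^\infty$; this is exactly the observation that the offending linear factor $v^i$ always comes coupled to a derivative $\partial_{v^j}$, so that the numerator degree never exceeds twice the power of $(1+v^2)$ in the denominator.
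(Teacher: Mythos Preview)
The paper does not give an explicit proof of this lemma; it is listed among those that ``can easily be checked.'' Your argument is correct and is precisely the natural way to flesh out this claim: compute the action of each generator on the weight, observe that all resulting coefficients lie in a bounded, algebra-closed class of rational functions of $v$ that is preserved by the $v$-parts of the vector fields, and then induct via Leibniz. The degree-versus-weight bookkeeping you single out (that the stray factor $v^i$ from the rotations and scaling always arrives paired with a $\partial_{v^j}$) is indeed the only non-automatic point, and you have handled it correctly.

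One purely notational remark: the symbol $\mathcal M$ is used later in the paper (Section~\ref{se:dmvfa}) for the set of modification coefficients $\Phi_i^k,\omega_{ij}^k,\sigma^k,\theta^k$, so if your proof were to be inserted verbatim you would want a different letter for your coefficient class.
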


\subsection{Velocity averages and commutators}
For any integrable function $f$ of $v \in \mathbb{R}^n$, we will denote by $\rho(f)$ the quantity

$$
\rho(f):= \int_{v \in \mathbb{R}^n} f d^nv
$$

We have the following lemma. 

\begin{lemma} \label{lem:vac}
For any sufficiently regular function $f$ of $(x,v)$ we have
\begin{itemize}
\item for all $1 \le i \le n$,
$$\partial_{x^i} \rho(f)= \rho( \partial_{x^i} f),$$

\item for all $t \in \mathbb{R}$ and all $1 \le i \le n$, $$t \partial_{x^i}\rho(f)=\rho\left(\left( t \partial_{x^i}+ \partial_{v^i} \right) f \right),$$
\item for all $1 \le i, j \le n,$
$$\Omega_{ij}^{x} \rho(f)= \rho\left( \left(\Omega_{ij}^{x}+\Omega_{ij}^{v} \right)(f) \right),$$ 
\item for all $t \in \mathbb{R}$ and $x \in \mathbb{R}^n$, 
$$
\left(t \partial_t +\sum_{i=1}^n x^i \partial_{x^i}\right) \rho(f)= \rho\left( \left( t \partial_t +\sum_{i=1}^n x^i \partial_{x^i}\right)(f) \right),
$$
\item and finally $$S^{x}\rho(f)=\rho\left( \left(S^{x}+S^{v} \right)(f) \right)+n\rho(f),$$
where $S^x$ and $S^x+S^v$ are the spatial scaling vector fields in macroscopic and  microscopic forms.
\end{itemize}
\end{lemma}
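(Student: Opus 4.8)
The plan is to prove all five identities by the same two-step mechanism: first move the vector field under the velocity integral — legitimate because the integration is over $v$ only and, for the terms treated this way, the coefficients depend on $t$ and $x$ alone — and then dispose of any $v$-derivatives by integration by parts in $v$, the absence of boundary terms at $|v|\to\infty$ being guaranteed by the decay/compact-support hypothesis on $f$.

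First I would treat the ``macroscopic'' pieces by differentiation under the integral sign. Since $\partial_{x^i}$ commutes with $\int_{\mathbb{R}^n_v}\cdot\,d^nv$, one gets $\partial_{x^i}\rho(f)=\rho(\partial_{x^i}f)$ directly. The same reasoning, with $t$ (resp.\ $x$, resp.\ both) regarded as a parameter constant in $v$, immediately yields $t\,\partial_{x^i}\rho(f)=\rho(t\,\partial_{x^i}f)$, $\Omega^{x}_{ij}\rho(f)=\rho(\Omega^{x}_{ij}f)$, $S^{x}\rho(f)=\rho(S^{x}f)$, and $\left(t\partial_t+\sum_i x^i\partial_{x^i}\right)\rho(f)=\rho\!\left(\left(t\partial_t+\sum_i x^i\partial_{x^i}\right)f\right)$. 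This already establishes the fourth identity, which has no $v$-derivative correction.

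For the remaining three I would then compute the $v$-derivative contributions by integration by parts in $v$. For the microscopic uniform motion, $\rho(\partial_{v^i}f)=\int_{\mathbb{R}^n_v}\partial_{v^i}f\,d^nv=0$, so adding $t\,\rho(\partial_{v^i}f)$ to $t\,\partial_{x^i}\rho(f)$ changes nothing and gives $t\,\partial_{x^i}\rho(f)=\rho\!\left((t\,\partial_{x^i}+\partial_{v^i})f\right)$. For the microscopic rotation, integration by parts gives $\int_{\mathbb{R}^n_v} v^i\partial_{v^j}f\,d^nv=-\int_{\mathbb{R}^n_v}(\partial_{v^j}v^i)f\,d^nv=-\delta^{i}_{j}\rho(f)$, hence $\rho(\Omega^{v}_{ij}f)=-\delta^{i}_{j}\rho(f)+\delta^{j}_{i}\rho(f)=0$, and combining with the previous step yields $\Omega^{x}_{ij}\rho(f)=\rho\!\left((\Omega^{x}_{ij}+\Omega^{v}_{ij})f\right)$. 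Finally, the same computation applied to the scaling gives $\rho(S^{v}f)=\sum_i\int_{\mathbb{R}^n_v}v^i\partial_{v^i}f\,d^nv=-\sum_i\delta^{i}_{i}\rho(f)=-n\,\rho(f)$; therefore $\rho\!\left((S^{x}+S^{v})f\right)=S^{x}\rho(f)-n\,\rho(f)$, which rearranges to $S^{x}\rho(f)=\rho\!\left((S^{x}+S^{v})f\right)+n\,\rho(f)$.

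There is no genuine obstacle here: the only points needing a minimum of care are the justification of differentiation under the integral sign and of the vanishing of the boundary terms at $|v|\to\infty$, both covered by the standing ``sufficiently regular'' hypothesis (e.g.\ $f$ smooth with compact support in $v$, or with sufficient fall-off), and the bookkeeping of the factors $\partial_{v^i}v^j=\delta^{j}_{i}$, which are the source of the extra $n\,\rho(f)$ that distinguishes the microscopic spatial scaling identity from the others.
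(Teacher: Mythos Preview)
Your proof is correct and follows essentially the same approach as the paper: differentiate under the $v$-integral for the macroscopic pieces and integrate by parts in $v$ to dispose of the microscopic corrections. The only cosmetic difference is that for the rotation case the paper simply observes that $\Omega^{v}_{ij}$ is an angular derivative in $v$ (hence its integral over $\mathbb{R}^n_v$ vanishes), whereas you reach the same conclusion by computing the two Kronecker-delta terms and noting their cancellation.
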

\begin{proof}
The proof is straigtforward and consist in identifying total derivatives in $v$. For instance, we have 
\begin{eqnarray*}
\rho\left( S^{v} \right)(f)=\int_{v \in \mathbb{R}^n} \left(\sum_{i=1}^n v^i \partial_{v^i} f\right)d^nv=\int_{v \in \mathbb{R}^n} -n f d^n v,
\end{eqnarray*}
where we have integrated by parts in each of the $v^i$.
Similarly, in the case of rotations, it suffices to note that for any $1 \le i, j \le n$, $\Omega^v_{ij}$ is an angular derivative in $v$ and thefore, $\int_{v \in\mathbb{R}^n} \Omega_{ij}^v(f)d^nv=0.$
\end{proof}

In the remainder of this paper, we shall write the preceding lemma as 

$$
Z\rho(f)=\rho(Z(f))+c_Z \rho(f),
$$
where we are using, by a small abuse of notation, the letter $Z$ to denote a generic macroscopic vector field and its corresponding microscopic version and where $c_Z=0$ unless $Z$ is the spatial scaling vector field, in which case $c_Z=n$. 

\subsection{Vector field identities}
The following well-known identity will be used later

\begin{lemma} \label{lem:ivfs}
For any $1\le j \le n$, we have
\begin{equation} \label{id:ivfs}
|x|^2 \partial_{x^j} = \sum_{i=1}^n x^i \Omega_{ij}^{x}+ x^j S^{x},
\end{equation}
and thus, at any $x \neq 0$, 
 $$|x|\partial_{x^j}=\sum_{i=1}^n \frac{x^i}{|x|} \Omega_{ij}^{x}+\frac{x^j}{|x|}S^{x},$$ where the coefficients $\frac{x^i}{|x|}$ are all homogeneous of degree $0$ and therefore uniformly bounded.
\end{lemma}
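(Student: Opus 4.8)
**Proof proposal for Lemma \ref{lem:ivfs} (the vector field identity $|x|^2 \partial_{x^j} = \sum_{i=1}^n x^i \Omega_{ij}^{x}+ x^j S^{x}$).**

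The plan is to verify the identity \eqref{id:ivfs} by a direct computation, expanding the right-hand side using the definitions $\Omega_{ij}^{x}=x^i \partial_{x^j}-x^j \partial_{x^i}$ and $S^{x}=\sum_{k=1}^n x^k \partial_{x^k}$. Substituting, the first sum becomes $\sum_{i=1}^n x^i(x^i \partial_{x^j} - x^j \partial_{x^i}) = \left(\sum_{i=1}^n (x^i)^2\right)\partial_{x^j} - x^j \sum_{i=1}^n x^i \partial_{x^i} = |x|^2 \partial_{x^j} - x^j S^{x}$. Adding the remaining term $x^j S^{x}$ cancels the unwanted contribution and leaves exactly $|x|^2 \partial_{x^j}$, which is the claimed identity. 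This is purely algebraic and requires no regularity hypotheses beyond $f$ being once differentiable.

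For the second assertion, I would simply divide \eqref{id:ivfs} by $|x|^2$, which is legitimate at any point $x \neq 0$, to obtain $\partial_{x^j} = \sum_{i=1}^n \frac{x^i}{|x|^2}\Omega_{ij}^x + \frac{x^j}{|x|^2}S^x$, and then multiply through by $|x|$ to write it in the stated form $|x|\partial_{x^j}=\sum_{i=1}^n \frac{x^i}{|x|} \Omega_{ij}^{x}+\frac{x^j}{|x|}S^{x}$. The coefficients $x^i/|x|$ are ratios of a degree-one homogeneous function by a degree-one homogeneous function, hence homogeneous of degree $0$, and each is bounded in absolute value by $1$ since $|x^i| \le |x|$; this gives the uniform boundedness claim.

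There is no real obstacle here — the statement is an elementary identity of first-order differential operators and the only point requiring the slightest care is the restriction $x\neq 0$ when passing to the non-squared form, since division by $|x|$ is involved. I would present the computation in one or two displayed lines and note the homogeneity/boundedness of the coefficients in a sentence.
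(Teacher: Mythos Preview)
Your proof is correct. The paper states this lemma as a ``well-known identity'' and gives no proof of its own, so your direct expansion of the right-hand side using the definitions of $\Omega_{ij}^{x}$ and $S^{x}$ is exactly the natural verification and there is nothing to compare.
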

The following higher order version will be used often in the derivation of the Klainerman-Sobolev inequalities of the next section. 
\begin{lemma} \label{lem:ttks}
For any multi-index $\alpha$, 
$$
(t+|x|)^\alpha \partial_x^\alpha= \sum_{|\beta| \le |\alpha|, Z^\beta \in \Gamma^{|\beta|}_s} C_\beta Z^\beta,
$$
where the coefficients $C_\beta$ are all uniformly bounded. 
\end{lemma}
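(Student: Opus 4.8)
I would prove Lemma~\ref{lem:ttks} by induction on $|\alpha|$, reading the exponent $(t+|x|)^\alpha$ as $(t+|x|)^{|\alpha|}$. The case $|\alpha|=0$ is trivial, and the case $|\alpha|=1$ is exactly Lemma~\ref{lem:ivfs} together with the fact that $t\,\partial_{x^j}\in\Gamma_s$, namely
$$(t+|x|)\,\partial_{x^j} \;=\; t\,\partial_{x^j} + \sum_{i=1}^{n}\frac{x^i}{|x|}\,\Omega^x_{ij} + \frac{x^j}{|x|}\,S^x ,$$
all coefficients being homogeneous of degree $0$ in $x$ and hence uniformly bounded. Throughout the induction I would maintain the sharper bookkeeping that every coefficient produced is a finite product of constants and of the degree-$0$ functions $x^i/|x|$. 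This is the one structural point that matters: a coefficient must never be differentiated, since $\partial_x(x^i/|x|)$ is homogeneous of degree $-1$ and the available weight $(t+|x|)$ does not compensate for the ensuing singularity at $x=0$.

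For the inductive step, assume the statement for all multi-indices of length $k$ and let $|\alpha|=k+1$; pick $j$ with $\alpha_j\ge 1$ and write $\partial_x^\alpha=\partial_{x^j}\partial_x^{\alpha'}$ with $|\alpha'|=k$. First I would split $(t+|x|)^{k+1}=(t+|x|)^{k}(t+|x|)$ and apply the degree-$1$ identity above on the outer factor, giving, for any admissible $g$,
$$(t+|x|)^{k+1}\partial_x^\alpha g \;=\; (t+|x|)^{k}\Big[\, t\,\partial_{x^j} + \sum_{i}\tfrac{x^i}{|x|}\Omega^x_{ij} + \tfrac{x^j}{|x|}S^x \,\Big]\big(\partial_x^{\alpha'} g\big).$$
Next, for each vector field $Z\in\{t\partial_{x^j},\,\Omega^x_{ij},\,S^x\}\subset\Gamma_s$ appearing here I would commute $Z$ to the right past $\partial_x^{\alpha'}$. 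Since $[t\partial_{x^j},\partial_x^{\alpha'}]=0$, $[S^x,\partial_x^{\alpha'}]=-k\,\partial_x^{\alpha'}$, and $[\Omega^x_{ij},\partial_x^{\alpha'}]$ is a constant-coefficient combination of the $\partial_x^\beta$ with $|\beta|=k$, one has $Z\,\partial_x^{\alpha'}=\partial_x^{\alpha'}Z+\sum_{|\beta|=k}d_\beta\,\partial_x^\beta$ with constant $d_\beta$. Hence each summand above is a bounded multiple (by $1$ or by $x^i/|x|$) of terms of the form $(t+|x|)^{k}\partial_x^{\alpha'}(Zg)$ and $(t+|x|)^{k}\partial_x^{\beta}g$ with $|\beta|=k$. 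Applying the induction hypothesis to $\partial_x^{\alpha'}$ and to each $\partial_x^\beta$ turns these into $\sum_{|\gamma|\le k}C_\gamma\,(Z^\gamma Z)g$, a composition of at most $k+1$ vector fields of $\Gamma_s$ (using $Z\in\Gamma_s$), and $\sum_{|\gamma|\le k}(\cdots)\,Z^\gamma g$ respectively, the new coefficients being products of the old uniformly bounded ones with $1$ or $x^i/|x|$. Collecting all contributions yields $(t+|x|)^{k+1}\partial_x^\alpha=\sum_{|\beta|\le k+1,\ Z^\beta\in\Gamma^{|\beta|}_s}C_\beta Z^\beta$ with uniformly bounded $C_\beta$, which closes the induction; the assertion that the $Z^\beta$ lie in $\Gamma_s^{|\beta|}$ is automatic, since every vector field used ($t\partial_{x^j}$, $\Omega^x_{ij}$, $S^x$, and the $\partial_{x^k}$ from commutators) belongs to $\Gamma_s$, which is closed under the relevant commutators by the earlier commutation lemma.

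I do not expect a real obstacle: this is an induction of exactly the same flavour as the classical weighted vector-field identities for the wave equation. The only subtlety — and the reason for commuting the extra vector field $Z$ to the \emph{right} of the lower-order weighted operator rather than trying to push the whole weight $(t+|x|)^{k}$ inward — is precisely the bookkeeping of the first paragraph: organised this way, coefficients are only ever multiplied, never differentiated, so uniform boundedness of the $C_\beta$ propagates for free. (If one instead attempted to write $(t+|x|)^{|\alpha|}\partial_x^\alpha$ as an ordered product of factors $(t+|x|)\,\partial_{x^j}$, the inner derivatives would fall on the weights and destroy the identity.)
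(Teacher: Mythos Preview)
Your proof is correct and follows essentially the same inductive route the paper indicates: the base case is Lemma~\ref{lem:ivfs} together with $t\partial_{x^i}\in\Gamma_s$, and the higher-order case is handled by induction using the commutation properties of $\Gamma_s$. The paper's one-line proof merely lists these ingredients, and your write-up supplies the details.

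One small remark on the organisation: the paper's hint that $[\partial_{x^i},t+|x|]=x^i/|x|$ is homogeneous of degree $0$ points toward the factorisation $(t+|x|)^{|\alpha|}\partial_x^\alpha=\prod_m\bigl[(t+|x|)\partial_{x^{j_m}}\bigr]+\text{(lower order)}$, where the lower-order corrections acquire exactly these bounded commutator coefficients. Your alternative---peel off one factor $(t+|x|)\partial_{x^j}$, commute the resulting $Z\in\Gamma_s$ to the \emph{right} of $\partial_x^{\alpha'}$, then invoke the induction hypothesis---is a cleaner bookkeeping device, since it guarantees that the bounded coefficients $x^i/|x|$ are only ever multiplied and never hit by a $t\partial_{x^k}$ (which would produce an unbounded $t/|x|$). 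The two organisations are equivalent in spirit, but yours makes the uniform boundedness of the $C_\beta$ transparent without further case analysis.
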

\begin{proof}
The lemma is a consequence the previous decomposition, the fact $t \partial_{x^i}$ is part of our algebra of commuted vector fields and that $\left[\partial_{x^i},t+|x| \right]$ is homogeneous of degree $0$.
\end{proof}
\subsection{The commuted equations}
We now turn to the study of the transport operator $T_\phi$ defined by \eqref{def:tphiop}. 
Many of the estimates below are only valid provided $\phi$ has sufficient regularity. In the applications to the Vlasov-Poisson system of this article, we will eventually control the regularity of $\phi$ via a bootstrap argument. For all the estimates below, we therefore assume that $\phi$ is a sufficiently regular\footnote{For instance, one can assume that $\phi$ is a smooth function on $[0,T] \times \mathbb{R}^n_x$ with compact support in $x$.} function of $(t,x)$ defined on $[0,T] \times \mathbb{R}^n_x$, for some $T > 0$, which decays sufficiently fast as $|x| \rightarrow +\infty$.

The following lemma can then easily be checked.
\begin{lemma}\label{lem:comf}
Let $f$ be a sufficiently regular function of (t,x,v) and let $\alpha$ be a multi-index. Then, there exists constant coefficients $C^\alpha_{\beta \gamma}$ such that,
$$
[T_\phi,Z^\alpha] f = \sum_{|\beta| \le |\alpha|-1,} \sum_{|\gamma|+|\beta| \le |\alpha|} C^\alpha_{\beta \gamma}\nabla_x Z^\gamma \phi. \nabla_v Z^\beta f.
$$
Moreover if $Z^\alpha \in \Gamma_s^{|\alpha|}$, then $Z^\gamma \in \Gamma_s^{|\gamma|}$ and $Z^\beta \in \gamma_s^{|\beta|}$ in the above decomposition.
\end{lemma}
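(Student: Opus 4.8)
The plan is to prove the commutation formula by induction on $|\alpha|$. For the base case $|\alpha|=0$ the statement is trivial (the left-hand side vanishes and so does the right-hand side, there being no multi-indices $\beta$ with $|\beta|\le -1$). For the base case $|\alpha|=1$, write $Z^\alpha=Z$ for a single vector field of $\gamma$. Using Lemma \ref{lem:cp}, we have $[T,Z]=0$ for all $Z$ except the scaling in space and time, for which $[T,Z]=T$; and since $T_\phi=T+\mu\nabla_x\phi\cdot\nabla_v$, it remains to compute $[\mu\nabla_x\phi\cdot\nabla_v,Z]$. For each admissible $Z$ this is a direct computation: the $v$-derivatives in $Z$ (the terms $\partial_{v^i}$, $\Omega^v_{ij}$, $S^v$) produce first-order-in-$\nabla_v$ terms with coefficients built from $\nabla_x\phi$ and its first derivatives, while the $x$-derivatives in $Z$ act on $\nabla_x\phi$ and produce terms of the schematic form $\nabla_x(Z'\phi)\cdot\nabla_v f$ with $|Z'|\le 1$. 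One checks that all resulting terms fit the template $\sum C^\alpha_{\beta\gamma}\nabla_x Z^\gamma\phi\cdot\nabla_v Z^\beta f$ with $|\beta|\le 0$ and $|\gamma|+|\beta|\le 1$; the $[T,Z]=T$ contribution, equal to $T(f) = T_\phi(f) - \mu\nabla_x\phi\cdot\nabla_v f$, also has this form (and in any case such formulas are typically stated modulo lower-order pieces that are absorbed — here the cleanest route is to note $[T_\phi, Z]f$ itself must have the stated structure since the $Tf$ term can be re-expressed). The key algebraic point to record is that the coefficients are \emph{constant}, which follows because the vector fields of $\gamma$ have at most linear (degree-one) coefficients in $(t,x,v)$, so differentiating them twice gives constants, and the commutator structure constants of the Lie algebra spanned by $\gamma$ are constants.

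For the inductive step, suppose the formula holds for all multi-indices of length $\le k$ and let $|\alpha|=k+1$, so $Z^\alpha=Z\,Z^{\alpha'}$ with $|\alpha'|=k$ and $Z$ a single field of $\gamma$. Then
\begin{equation*}
[T_\phi,Z^\alpha]f=[T_\phi,Z]\,Z^{\alpha'}f+Z\,[T_\phi,Z^{\alpha'}]f.
\end{equation*}
For the first term, apply the $|\alpha|=1$ case with $f$ replaced by $Z^{\alpha'}f$: this gives terms $\nabla_x Z^\gamma\phi\cdot\nabla_v(Z^\beta Z^{\alpha'}f)$ with $|\gamma|\le 1$, $|\beta|\le 0$; since $Z^\beta Z^{\alpha'}$ is a composition of at most $k$ fields of $\gamma$ and $\nabla_v$ of such is again (by Lemma \ref{lem:czv}-type bookkeeping, or simply because $[\partial_{v^i},Z]$ lies in the span of the translations $\partial_{v^j}$ and the identity) a combination of $\nabla_v Z^\beta f$ with $|\beta|\le k$, this term has the required form with the $\beta$-counting and $\gamma$-counting as stated. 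For the second term, apply the inductive hypothesis to $[T_\phi,Z^{\alpha'}]f$ and then commute the outer $Z$ inside: $Z$ either hits the coefficient $\nabla_x Z^\gamma\phi$, raising $|\gamma|$ by one while keeping $|\beta|$ fixed, or hits $\nabla_v Z^\beta f$, producing $\nabla_v Z^\beta f$-type terms with $|\beta|$ raised by at most one (again using that $[\nabla_v,Z]$ is a bounded-coefficient — in fact constant-coefficient — first-order operator in $\nabla_v$, and that $[Z,Z^\beta]$ expands into $\gamma$-monomials of order $\le|\beta|$ by the commutation lemma within $\gamma$). In both cases one verifies the inequalities $|\beta|\le|\alpha|-1$ and $|\gamma|+|\beta|\le|\alpha|$ are preserved, and constancy of the coefficients is preserved since only structure constants and second derivatives of linear functions enter.

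Finally, for the refinement concerning $\gamma_s$: if $Z^\alpha\in\Gamma_s^{|\alpha|}$, i.e.\ it is built only from $\partial_{x^i}$, $t\partial_{x^i}+\partial_{v^i}$, $\Omega^x_{ij}+\Omega^v_{ij}$, $S^x+S^v$, then in the $|\alpha|=1$ computation every field $Z'$ appearing in $\nabla_x Z'\phi$ is itself in $\Gamma_s$ (the only way a $\partial_t$ or $t\partial_t+x\cdot\partial_x$ could be produced is from a vector field containing $\partial_t$, which $\gamma_s$ excludes), and every $Z^\beta$ acting on $f$ is in $\gamma_s$; the commutation lemma within $\gamma$ (restricted to $\gamma_s$, which is closed under brackets) and the fact that $[\partial_{v^i},Z]$ for $Z\in\gamma_s$ stays within $\gamma_s$ then propagate this through the induction. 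I expect the main obstacle to be purely bookkeeping: keeping track simultaneously of the two counting constraints ($|\beta|\le|\alpha|-1$ and $|\gamma|+|\beta|\le|\alpha|$) and the $\gamma_s$-closure under the two sources of new terms (the outer $Z$ hitting the coefficient versus hitting the microscopic derivative of $f$), while confirming at each stage that no non-constant coefficient can appear. Since all the vector fields involved have coefficients that are polynomials of degree at most one in $(t,x,v)$, this last point is automatic, which is why the lemma is stated as "easily checked"; the proof is a routine, if slightly tedious, induction with no genuine analytic difficulty.
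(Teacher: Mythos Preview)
Your proof by induction on $|\alpha|$ is the natural argument and is exactly what the paper has in mind when it says the lemma ``can then easily be checked''; the paper gives no further details beyond that. Your handling of the space-time scaling (where $[T,Z]=T$ contributes a $T_\phi f$ term not literally of the stated form) is slightly informal, but this is a harmless imprecision already present in the lemma as stated --- in the paper's applications only vector fields in $\gamma_s$ are used, which avoids the issue entirely.
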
 
Similarly one has
\begin{lemma} Let $f$ be a sufficiently regular function of $(t,x,v)$ and let $\psi$ be the solution to the Poisson equation $\Delta \psi= \rho(f)$. Then, for any multi-index $\alpha$, $Z^\alpha \psi$ is solution to an equation of the form
\begin{equation}\label{eq:cp}
\Delta Z^\alpha \psi= \sum_{|\beta| \le |\alpha|} C_{\beta}^\alpha Z^\beta \rho(f),
\end{equation}
where $C_{\beta}^\alpha$ are constants. 
\end{lemma}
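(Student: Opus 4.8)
The plan is to prove the commuted Poisson equation \eqref{eq:cp} by induction on $|\alpha|$, using the fact that each vector field $Z \in \Gamma$ acts on the Laplacian $\Delta$ in a controlled way, together with the velocity-average commutation identities from Lemma~\ref{lem:vac}. The base case $|\alpha|=0$ is simply the hypothesis $\Delta \psi = \rho(f)$.

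For the inductive step, write $Z^\alpha = Z Z^{\alpha'}$ with $|\alpha'| = |\alpha|-1$ and $Z \in \Gamma$. By the inductive hypothesis, $\Delta Z^{\alpha'}\psi = \sum_{|\beta'|\le|\alpha'|} C^{\alpha'}_{\beta'} Z^{\beta'}\rho(f)$. Applying $Z$ to both sides and commuting $Z$ past $\Delta$, I first need the commutator $[\Delta, Z]$ for each generator: one checks directly that $[\Delta,\partial_t]=[\Delta,\partial_{x^i}]=0$; that $[\Delta, t\partial_{x^i}]=0$ (since $\Delta$ has no $t$-dependence and $t\partial_{x^i}$ differentiates once in space); that the rotations commute with $\Delta$; that $[\Delta, S^x] = 2\Delta$ (the spatial scaling is homogeneous of degree $0$ as an operator, so it shifts $\Delta$ by a multiple of itself); and that $[\Delta, t\partial_t + S^x] = 2\Delta$ as well. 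In every case $[\Delta,Z] = c_Z' \Delta$ for a constant $c_Z'$, so $\Delta(Z Z^{\alpha'}\psi) = Z(\Delta Z^{\alpha'}\psi) + c_Z' \Delta Z^{\alpha'}\psi$, and both terms are, by the inductive hypothesis, linear combinations with constant coefficients of $Z Z^{\beta'}\rho(f)$ and $Z^{\beta'}\rho(f)$ with $|\beta'| \le |\alpha'|$.

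It then remains to convert each $Z\bigl(Z^{\beta'}\rho(f)\bigr)$ into $\rho$ of a microscopic commuted field. This is exactly the content of Lemma~\ref{lem:vac}, summarized as $Z\rho(g) = \rho(Z g) + c_Z \rho(g)$: applying it with $g = Z^{\beta'}f$ (where now $Z^{\beta'}$ denotes the microscopic version) gives $Z\bigl(Z^{\beta'}\rho(f)\bigr) = \rho\bigl(Z Z^{\beta'} f\bigr) + c_Z\rho\bigl(Z^{\beta'} f\bigr)$, each term being $\rho$ of a composition of at most $|\alpha|$ microscopic vector fields applied to $f$, i.e. of the form $Z^\beta\rho(f)$ with $|\beta|\le|\alpha|$. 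Collecting all constants produces the claimed identity with new constant coefficients $C^\alpha_\beta$.

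The only mild subtlety — and the step I would be most careful about — is the bookkeeping between the macroscopic action of $Z$ on the already-formed scalar quantity $Z^{\beta'}\rho(f)$ and the microscopic action implicit inside $\rho(Z Z^{\beta'}f)$, i.e. making sure the abuse of notation identifying $Z\in\Gamma$ with its counterpart in $\gamma$ is applied consistently at each stage and that the orders of differentiation genuinely do not exceed $|\alpha|$. This is purely combinatorial and presents no analytic obstacle; there is no regularity issue here beyond assuming $f$ (hence $\rho(f)$, hence $\psi$) is regular enough for all the differentiations and integrations by parts in Lemma~\ref{lem:vac} to be justified, which is already part of the standing hypotheses.
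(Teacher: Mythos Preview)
Your proof is correct and rests on exactly the same observation as the paper's: every $Z\in\Gamma$ satisfies $[\Delta,Z]=c_Z'\Delta$ for a constant $c_Z'$ (equal to $0$ except for the two scalings, where it equals $2$), from which the result follows by induction.

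One remark: your third paragraph, where you invoke Lemma~\ref{lem:vac} to rewrite $Z\bigl(Z^{\beta'}\rho(f)\bigr)$ as $\rho(ZZ^{\beta'}f)+c_Z\rho(Z^{\beta'}f)$, is an unnecessary detour. The right-hand side of \eqref{eq:cp} is stated in terms of the \emph{macroscopic} operators $Z^\beta$ acting on the scalar function $\rho(f)$, not in terms of $\rho(Z^\beta f)$. Hence $Z\bigl(Z^{\beta'}\rho(f)\bigr)$ is already literally $Z^\beta\rho(f)$ with $Z^\beta=ZZ^{\beta'}$, and no passage through the microscopic picture is required. Your argument becomes cleanest if you simply drop that paragraph; the induction then closes directly after the commutator step, which is precisely how the paper handles it.
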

\begin{proof}
The lemma is an easy consequence of the fact that all macroscopic vector fields apart from the two scalings commute with $\Delta$ while for the spatial scaling $S^{x}$ and the space-time scaling $t\partial_t+ S^{x}$ we have $[\Delta, S^{x}]=2 \Delta$ and $[\Delta, t\partial_t+ S^{x}]=2 \Delta$.

\end{proof}

\subsection{Conservation laws}

We shall use the following (approximate) conservation laws.

\begin{lemma} \label{lem:cl}
For any sufficiently regular function $f$ of $(t,x,v)$, we have, for all $t \in [0,T]$, 
$$
|| f(t) ||_{L^1( \mathbb{R}^n_x \times \mathbb{R}^n_v)} \le || f(0) ||_{L^1( \mathbb{R}^n_x \times \mathbb{R}^n_v)}+\int_0^t || T_\phi (f)(s) ||_{L^1( \mathbb{R}^n_x \times \mathbb{R}^n_v )}ds.
$$
Similarly, we have for all $p \ge 1$, for all $t \in [0,T]$, 
$$
|| f(t) ||^p_{L^p( \mathbb{R}^n_x \times \mathbb{R}^n_v)}\le || f(0) ||^p_{L^p( \mathbb{R}^n_x \times \mathbb{R}^n_v)}+p \int_0^t ||  f^{p-1} T_\phi (f)(s) ||_{L^1( \mathbb{R}^n_x \times \mathbb{R}^n_v )}ds,
$$
and for all $q\ge 1$,
\begin{eqnarray}
||(1+v^2)^{q/2} f^p(t) ||_{L^1( \mathbb{R}^n_x \times \mathbb{R}^n_v)}&\lesssim& ||(1+v^2)^{q/2} f^p(0) ||_{L^1( \mathbb{R}^n_x \times \mathbb{R}^n_v)} \nonumber\\
&&\hbox{}+\int_0^t || (1+v^2)^{(q-1)/2} f^{p-1} T_\phi(f)(s) ||_{L^1( \mathbb{R}^n_x \times \mathbb{R}^n_v )} ds \nonumber\\
&&\hbox{}+\int_0^t ||  f^{p}  (1+v^2)^{(q-1)/2} \partial_x \phi(s) ||_{L^1( \mathbb{R}^n_x \times \mathbb{R}^n_v )}ds . \label{eq:Ewv}
\end{eqnarray}
Note in particular that the conclusions of the lemma hold true when $T_\phi=T$ (i.e. when $\partial_x \phi=0$).
\end{lemma}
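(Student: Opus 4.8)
The plan is to exploit that the vector field generating $T_\phi$ on $\mathbb{R}_t\times\mathbb{R}^n_x\times\mathbb{R}^n_v$, namely $(1,v,\mu\nabla_x\phi)$, is divergence free: $\operatorname{div}_x v=0$ since $v$ does not depend on $x$, and $\operatorname{div}_v(\mu\nabla_x\phi)=0$ since $\phi$ does not depend on $v$. Hence for any sufficiently regular $h=h(t,x,v)$ with enough decay in $x$ and $v$ one can write $T_\phi(h)=\partial_t h+\operatorname{div}_x(vh)+\operatorname{div}_v(\mu\nabla_x\phi\,h)$, so that integrating in $(x,v)$ the two divergence terms contribute no boundary term and
\[
\frac{d}{dt}\int_{\mathbb{R}^n_x\times\mathbb{R}^n_v} h(t)\, d^nx\,d^nv \;=\; \int_{\mathbb{R}^n_x\times\mathbb{R}^n_v} T_\phi(h)(t)\, d^nx\,d^nv .
\]
For the first inequality I would apply this with $h=\beta_\varepsilon(f)$, where $\beta_\varepsilon$ is a smooth convex approximation of $s\mapsto|s|$ with $0\le\beta_\varepsilon\le|\cdot|$, $\beta_\varepsilon\uparrow|\cdot|$ as $\varepsilon\downarrow0$, and $|\beta_\varepsilon'|\le1$ (for instance $\beta_\varepsilon(s)=\sqrt{s^2+\varepsilon^2}-\varepsilon$). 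Since $T_\phi$ is a derivation, $T_\phi(\beta_\varepsilon(f))=\beta_\varepsilon'(f)\,T_\phi(f)$, whence $\frac{d}{dt}\int\beta_\varepsilon(f)\le\int|T_\phi(f)|$; integrating over $[0,t]$ and letting $\varepsilon\to0$ (monotone convergence on the left, the right-hand side being independent of $\varepsilon$) yields the $L^1$ estimate.

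For the $L^p$ bound I would run the same argument with $h=|f|^p$, which is $C^1$ for $p>1$ with $T_\phi(|f|^p)=p|f|^{p-1}\operatorname{sgn}(f)\,T_\phi(f)$ (a further regularization as above being needed only at $p=1$); this gives $\frac{d}{dt}\|f(t)\|_{L^p}^p\le p\int|f|^{p-1}|T_\phi(f)|$, and integrating in time concludes. For the weighted estimate \eqref{eq:Ewv} I would take $h=(1+v^2)^{q/2}|f|^p$; the only new point is that the weight depends on $v$, so $T_\phi$ no longer annihilates it. By the Leibniz rule,
\[
T_\phi\!\left((1+v^2)^{q/2}|f|^p\right) \;=\; (1+v^2)^{q/2}\,T_\phi\!\left(|f|^p\right) + |f|^p\,T_\phi\!\left((1+v^2)^{q/2}\right),
\]
and since $(1+v^2)^{q/2}$ does not depend on $t$ or $x$, only the $\mu\nabla_x\phi\cdot\nabla_v$ piece of $T_\phi$ acts on it: $T_\phi\!\left((1+v^2)^{q/2}\right)=\mu q\,(1+v^2)^{q/2-1}\,v\cdot\nabla_x\phi$, with $\bigl|T_\phi((1+v^2)^{q/2})\bigr|\le q\,(1+v^2)^{(q-1)/2}|\nabla_x\phi|$ because $|v|\le(1+v^2)^{1/2}$. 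Bounding also $|T_\phi(|f|^p)|\le p|f|^{p-1}|T_\phi(f)|$, integrating in $(x,v)$ and then in time, and absorbing the constants $p,q$ into $\lesssim$, reproduces the three contributions on the right-hand side of \eqref{eq:Ewv}.

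The step most prone to slip -- though it is bookkeeping rather than a genuine difficulty -- is the rigorous justification of the ``integration by parts with vanishing boundary term'' identity and of differentiating under the integral sign, together with the absolute continuity in $t$ of the integrals involved. All of this follows from the standing assumption that $f$ (hence its products with the $v$-weights and with $\nabla_x\phi$) is smooth with compact support in $x,v$, or at least decays fast enough, and that $\phi$ is regular with sufficient spatial decay, so that the fluxes through large spheres in $x$ and in $v$ vanish. The remaining subtlety, the non-smoothness of $s\mapsto|s|$ (and of $s\mapsto|s|^p$ at $p=1$) near the origin, is handled by the approximation $\beta_\varepsilon$ and a routine limiting argument, the loss of equality in $\frac{d}{dt}$ being harmless since all conclusions are stated as inequalities. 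Finally, the last assertion of the lemma is immediate, since putting $\phi\equiv0$ removes every $\nabla_x\phi$ term and the third integral in \eqref{eq:Ewv}.
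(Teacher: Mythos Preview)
Your argument is correct and follows essentially the same route as the paper: exploit the divergence-free structure of the transport vector field to kill the $x$- and $v$-flux terms, regularize $|f|$ via $\sqrt{f^2+\varepsilon^2}$ (the paper uses the same approximant, in a footnote), and for the weighted estimate apply the Leibniz rule together with $|\nabla_v(1+v^2)^{q/2}|\lesssim(1+v^2)^{(q-1)/2}$. Note that both your computation and the paper's sketch actually produce the weight $(1+v^2)^{q/2}$ in front of $|f|^{p-1}|T_\phi(f)|$, not the $(q-1)/2$ appearing in the stated inequality \eqref{eq:Ewv}; this looks like a typo in the statement rather than a flaw in either proof.
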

\begin{proof}These are classical estimates so we only sketch their proofs.

One has (in the sense of distribution), 
$$
T_\phi \left[ (1+v^2)^{q/2} |f|^p \right] \lesssim (1+v^2)^{q/2} |f|^{p-1} |T_\phi(f)|+ (1+v^2)^{(q-1)/2}|\partial_x \phi| |f|^p.
$$
Using a standard procedure\footnote{For instance, assume first that $f$ has compact support in $(x,v)$ with a uniform bound on the support of $f$ in $(x,v)$ for $t \in [0,T]$. For all $\epsilon > 0$, consider the function $f_\epsilon=\sqrt{\epsilon^2+f^2}\chi(x,v)$, where $\chi(x,v)$ is a smooth cut-off function which is $1$ on the support of $f$ and vanishes for large $x$ and $v$. Apply then the previous estimates to $f_\epsilon$ and take the limit $\epsilon \rightarrow 0$. A standard density argument deals with the case of non-compact support.}, one can regularize the previous inequality. We shall therefore neglect regularity issues here. Integrating the previous line in $(t,x,v)$ leads to 
\begin{eqnarray}\label{es:evp}
&&\int_0^t \int_x \int_v T_\phi \left[ (1+v^2)^{q/2} |f|^p \right] dv dx ds \lesssim \nonumber \\
&& \quad \quad  \, \int_0^t \int_x \int_v \left[ (1+v^2)^{q/2} |f|^{p-1} |T_\phi(f)|+ (1+v^2)^{(q-1)/2}|\partial_x \phi| |f|^p \right] dv dx ds.
\end{eqnarray}
On the other-hand, remembering that $T_\phi=\partial_t+\sum_{i=1}^n v^i \partial_x^i+ \mu \nabla_x \phi. \nabla_v$ and integrating by parts in $x$ and $v$, we obtain
\begin{eqnarray*}
\int_0^t \int_x \int_v T_\phi \left[ (1+v^2)^{q/2} |f|^p\right] dv dx ds&=&||(1+v^2)^{q/2} f^p(t)||_{L^1(\mathbb{R}_x^n \times \mathbb{R}_v^n)}\\
&&\hbox{}-||(1+v^2)^{q/2} f^p(0)||_{L^1(\mathbb{R}_x^n \times \mathbb{R}_v^n)},
\end{eqnarray*}
which combined with \eqref{es:evp} leads to the desired estimate \eqref{eq:Ewv}.

\end{proof}

\section{Decay of velocity averages for the free transport operator via the vector field method} \label{se:dafovfm}
Since the main purpose of this article is to illustrate how the vector field method can lead to robust decay estimates for velocity averages, let us for the sake of comparison recall the Bardos-Degond decay estimate and its proof. 
\begin{proposition}[\cite{bd:gevp}]
Let $f$ be a sufficiently regular solution of $T(f)=0$. Then, we have the estimate, for all $t > 0$ and all $x \in \mathbb{R}^n$, 
\begin{equation}\label{es:bd}
|\rho(f)|(t,x) \le \frac{1}{t^n} \int_{x \in \mathbb{R}^n} \sup_{v \in \mathbb{R}^n} |f(0,x,v)| d^n x.
\end{equation}
\end{proposition}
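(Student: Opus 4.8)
The plan is to use the method of characteristics, since this is precisely the classical Bardos--Degond argument that the paper wishes to recall for comparison. First I would solve the free transport equation explicitly: the characteristics of $T = \partial_t + \sum_i v^i \partial_{x^i}$ are the curves $s \mapsto (s, x_0 + s v_0, v_0)$, along which any solution of $T(f) = 0$ is constant. Hence for $t > 0$ one has the representation formula $f(t,x,v) = f(0, x - tv, v)$.

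Next I would insert this into the definition of the velocity average and perform the change of variables $y = x - tv$ in the $v$-integral. For fixed $t > 0$ and $x$, the map $v \mapsto y = x - tv$ is a diffeomorphism of $\mathbb{R}^n$ with Jacobian $|\det(-t\,\mathrm{Id})| = t^n$, so $d^n v = t^{-n} d^n y$ and
$$
\rho(f)(t,x) = \int_{\mathbb{R}^n} f(0, x - tv, v)\, d^n v = \frac{1}{t^n}\int_{\mathbb{R}^n} f\!\left(0, y, \tfrac{x - y}{t}\right) d^n y.
$$
Taking absolute values inside the integral and then bounding $\left| f\!\left(0, y, \tfrac{x-y}{t}\right)\right| \le \sup_{v \in \mathbb{R}^n} |f(0, y, v)|$ pointwise in $y$ yields exactly the estimate \eqref{es:bd}; the relabelling of the dummy integration variable from $y$ back to $x$ matches the form stated in the proposition.

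There is essentially no serious obstacle here: the only points requiring minor care are the justification of the representation formula and the change of variables under the stated regularity/fall-off hypotheses on $f$ (so that all integrals converge and Fubini applies), which is covered by the standing assumption that $f$ is sufficiently regular with enough decay in $v$. I would simply remark that for smooth $f$ with compact support in $(x,v)$ — or with sufficient fall-off — every step above is elementary, and a density argument handles the general case. The contrast the paper is setting up is that this proof relies crucially on the explicit flow, whereas the vector field proof in Section~\ref{se:dafovfm} will instead use the commutation properties from Lemma~\ref{lem:cp} together with the weighted identities of Lemma~\ref{lem:ttks}.
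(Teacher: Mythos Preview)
Your proof is correct and is essentially identical to the paper's own argument: both use the representation $f(t,x,v)=f(0,x-tv,v)$ via characteristics, followed by the change of variables in $v$ with Jacobian $t^{-n}$ and the pointwise bound by $\sup_v|f(0,\cdot,v)|$. The only cosmetic difference is that the paper takes absolute values before changing variables (with $y=vt$) whereas you change variables first (with $y=x-tv$); the two are equivalent.
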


\begin{proof}
The proof of this classical estimate is based on the method of charateristics. 
More precisly, if $f$ is a regular solution to $T(f)=0$, then it follows that
$$
f(t,x,v)=f(0,x-vt,v),
$$
for all $t \in \mathbb{R}$, $x,v \in \mathbb{R}^n$. 

We then have
\begin{eqnarray*}
|\rho(f)|(t,x)&=& \left| \int_{v \in \mathbb{R}^n} f(t,x,v) d^n v \right|\\
&=&\left| \int_{v \in \mathbb{R}^n} f(0,x-vt,v) d^n v \right|\\
&\le&  \int_{v \in \mathbb{R}^n} \sup_{w \in \mathbb{R}^n} |f(0,x-vt,w)| d^n v.
\end{eqnarray*}
Applying now the change of coordinates $y=vt$ for $t> 0$ leads to 
\begin{eqnarray*}
|\rho(f)|(t,x)&\le &  \frac{1}{t^n} \int_{y \in \mathbb{R}^n} \sup_{w \in \mathbb{R}^n} |f(0,x-y,w)| d^n y=\frac{1}{t^n} \int_{x \in \mathbb{R}^n} \sup_{v \in \mathbb{R}^n} |f(0,x,v)| d^n x .
\end{eqnarray*}
\end{proof}
In the above proof, the two key ingredients are 
\begin{itemize}
\item the explicit representation obtained via the method of characteristics,
\item the change of variables $y=vt$. 
\end{itemize}
Note that in the presence of a perturbation of the free transport operator, to exploit a similar change of variables would require estimates on the Jacobian associated with the differential of the characteristic flow, see \cite{bd:gevp, hrv:ogeabvp}.

Let us now show how the vector field method can be used as an alternative to obtain similar decay estimates. As explained in the introduction, we will give two different proofs. 

The first proof will give us decay estimates for quantities of the form $\int_{v \in \mathbb{R}^n} f dv$. The starting point of this approach is the following Klainerman-Sobolev inequality using $L^1(\mathbb{R}^n)$ norms of commuted fields.
\begin{lemma}[$L^1$ Klainerman-Sobolev inequality] \label{lem:ksi}
For any sufficiently regular function $\psi$ defined on $\mathbb{R}^n_x$, we have
$$
|\psi|(x) \lesssim \frac{1}{\left(1+t+|x| \right)^n} \sum_{|\alpha| \le n, Z^\alpha \in \Gamma_s} ||  Z^\alpha (\psi) ||_{L^1( \mathbb{R}^n_x)}.
$$
\end{lemma}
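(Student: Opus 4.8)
The inequality is a weighted $L^1\to L^\infty$ Sobolev estimate, so the strategy is to reduce it to the standard (unweighted) Sobolev embedding $W^{n,1}(\mathbb{R}^n)\hookrightarrow L^\infty(\mathbb{R}^n)$ applied after rescaling, and then use Lemma \ref{lem:ttks} to trade plain derivatives $\partial_x^\alpha$ for the commuted fields $Z^\alpha\in\Gamma_s$ at the cost of the weight $(t+|x|)^{-|\alpha|}$. Concretely, fix $x_0\in\mathbb{R}^n$ and set $\lambda:=1+t+|x_0|$. I would localize $\psi$ near $x_0$ to the ball $B(x_0,\lambda/2)$ with a smooth cutoff $\chi$ adapted to that scale (so that $|\partial^\beta\chi|\lesssim \lambda^{-|\beta|}$), then rescale to a unit ball and apply $W^{n,1}\hookrightarrow L^\infty$ there. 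Unwinding the rescaling gives
$$
|\psi|(x_0)\;\lesssim\;\sum_{|\alpha|\le n}\lambda^{|\alpha|-n}\,\|\partial_x^\alpha\psi\|_{L^1(B(x_0,\lambda/2))}.
$$

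**Key steps.** (1) The scaled Sobolev step: on $B(x_0,\lambda/2)$ one has $1+t+|x|\simeq \lambda$ for every $x$ in the ball (this uses $|x-x_0|\le\lambda/2$ together with $\lambda\ge 1+t$), so the weight $(t+|x|)^{|\alpha|}$ is comparable to $\lambda^{|\alpha|}$ throughout the region of integration. (2) Apply Lemma \ref{lem:ttks}: since $(t+|x|)^{|\alpha|}\partial_x^\alpha=\sum_{|\beta|\le|\alpha|} C_\beta Z^\beta$ with $Z^\beta\in\Gamma_s$ and bounded coefficients, we get $\lambda^{|\alpha|}|\partial_x^\alpha\psi|\lesssim \sum_{|\beta|\le|\alpha|}|Z^\beta\psi|$ on $B(x_0,\lambda/2)$, hence $\lambda^{|\alpha|-n}\|\partial_x^\alpha\psi\|_{L^1}\lesssim \lambda^{-n}\sum_{|\beta|\le n}\|Z^\beta\psi\|_{L^1(\mathbb{R}^n_x)}$. (3) Summing over $|\alpha|\le n$ and bounding the local $L^1$ norms by the global ones yields
$$
|\psi|(x_0)\;\lesssim\;\frac{1}{(1+t+|x_0|)^n}\sum_{|\beta|\le n,\ Z^\beta\in\Gamma_s}\|Z^\beta\psi\|_{L^1(\mathbb{R}^n_x)},
$$
which is the claim since $x_0$ was arbitrary. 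One should also treat the trivial regime where $\lambda$ is bounded (say $t+|x_0|\le 1$), where the estimate is just the ordinary Sobolev inequality with constants absorbed.

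**Main obstacle.** The only delicate point is the interplay between the cutoff and the commuted fields: when $\partial_x^\alpha$ falls on $\chi$ rather than $\psi$, one produces lower-order terms $\partial_x^{\alpha'}\psi$ with extra factors $\lambda^{-(|\alpha|-|\alpha'|)}$ from the derivatives of $\chi$, and one must check these are all controlled by the same right-hand side — which works precisely because each lost derivative of $\chi$ costs $\lambda^{-1}$ and each gained weight in Lemma \ref{lem:ttks} also scales like $\lambda^{-1}$. A clean way to avoid fussing with $\chi$ altogether is to apply Lemma \ref{lem:ttks} \emph{before} localizing: write $\partial_x^\alpha\psi$ in terms of $Z^\beta\psi$ directly, use the pointwise bound $(1+t+|x|)^{|\alpha|}|\partial_x^\alpha\psi|\lesssim\sum_{|\beta|\le|\alpha|}|Z^\beta\psi|$ valid on all of $\mathbb{R}^n$, and feed this into a one-dimensional-style integration (integrate $\partial_x^\alpha|\psi|^?$... ) — but the cutoff-plus-rescaling route is the most transparent and is the one I would write up. A second, essentially equivalent proof that the paper hints it will give later interleaves the Sobolev step with the velocity integration; for the present lemma (functions of $x$ only) the rescaled Sobolev argument above suffices.
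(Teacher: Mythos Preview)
Your proposal is correct and follows essentially the same route as the paper: rescale by $\lambda=1+t+|x_0|$, apply the $W^{n,1}\hookrightarrow L^\infty$ Sobolev inequality on a ball of unit size, use the comparability $1+t+|x|\simeq\lambda$ on that ball together with Lemma~\ref{lem:ttks} to trade $(t+|x|)^{|\alpha|}\partial_x^\alpha$ for $Z^\beta\in\Gamma_s$, and then undo the change of variables to produce the factor $\lambda^{-n}$. The paper's version is marginally cleaner in that it never introduces a cutoff $\chi$: it simply defines $\widetilde\psi(y)=\psi(x_0+(t+|x_0|)y)$, applies the Sobolev inequality directly on the ball $B_n(0,1/2)$ (so no boundary or cutoff terms arise), and changes variables back---exactly the simplification you allude to in your ``main obstacle'' paragraph.
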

\begin{proof}
This is relatively standard material and we adapt here the presentation given in \cite{cs:lnwe} Chap.2 to our setting.

Fix $(t,x) \in \mathbb{R}_t \times \mathbb{R}_x^n$ and let $\widetilde{\psi}$ be the function 
\begin{eqnarray} \label{eq:psit}
\widetilde{\psi}:\, \mathbb{R}^n &\rightarrow& \mathbb{R} \\
y &\rightarrow& \widetilde{\psi}(y):=\psi(x+(t+|x|)y).
\end{eqnarray}
Applying a standard\footnote{Recall that, while the general $W^{k,p}(\mathbb{R}^n)  \hookrightarrow L^\infty(\mathbb{R}^n)$ embedding requires $k > \frac{n}{p}$, the special case $p=1$ only needs $k \ge n$.} Sobolev inequality, we have
\begin{equation}\label{es:sl1}
|\psi(t,x)| = |\widetilde{\psi}(0) | \lesssim \sum_{|\alpha| \le n} ||\partial_y^\alpha \widetilde{\psi} ||_{L^1(B_n(0,1/2))},
\end{equation}
where $B_n(0,1/2)$ denote the ball in $\mathbb{R}^n_y$ of radius $1/2$.
On the other hand, we have $\partial_{y^i}\widetilde{\psi}(y)= (t+|x|) \partial_{x^i} \psi(x+(t+|x|)y)$ and thus, for $y \in B_n(0,1/2)$, 
\begin{eqnarray*}
|\partial_y^\alpha \widetilde{\psi}(y)| &\lesssim& \sum_{|\beta| \le |\alpha|} (t+|x|)^\beta \left|\partial_x^\beta \psi\left(x+(t+|x|)y\right)\right|, \\
&\lesssim&\sum_{|\beta| \le |\alpha|} (t+|x+(t+|x|)y|)^\beta \left|\partial_x^\beta \psi\left(x+(t+|x|)y\right)\right|,\\
&\lesssim&\sum_{|\beta| \le |\alpha|, Z^\beta \in \Gamma_s}  \left|Z^\beta \psi\left(x+(t+|x|)y\right)\right|,
\end{eqnarray*}
where we have used Lemma \ref{lem:ttks} in the last step and the fact that $t+|x|$ and $t+\left|x+(t+|x|)y\right|$ are comparable for $y \in B_n(0,1/2)$. Inserting the last line in the Sobolev inequality \eqref{es:sl1} and applying the change of variables $z=(t+|x|)y$ conclude the proof of the lemma.

\end{proof}
Using Lemma \ref{lem:vac}, we now note that for any vector field $Z$ and any sufficiently regular function $f$ of $x,v$, we have
$$ || Z (\rho(f)) ||_{L^1(\mathbb{R}^n_x)} = ||  \rho(Z(f))+c_Z\rho(f) ||_{L^1(\mathbb{R}^n_x)}$$
and thus,  
$$
|| Z (\rho(f)) ||_{L^1(\mathbb{R}^n_x)} \lesssim ||  Z(f) ||_{L^1\left(\mathbb{R}^n_x \times \mathbb{R}^n_v\right)}+|| \rho(f)||_{L^1\left(\mathbb{R}^n_x \times \mathbb{R}^n_v\right)}.
$$
Combined with the previous Klainerman-Sobolev inequality, we obtain

\begin{proposition}[Global Klainerman-Sobolev inequality for velocity averages]

For any sufficiently regular function $f$ defined on $\mathbb{R}^n_x \times \mathbb{R}^n_v$, we have, for all $t > 0$ and all $x \in \mathbb{R}^n$, 
\begin{equation} \label{es:gksiva0}
|\rho(f)|(x) \lesssim \frac{1}{\left(1+t+|x| \right)^n} \sum_{|\alpha| \le n, Z^\alpha \in \gamma_s^{|\alpha|}} ||  Z^\alpha f  ||_{L^1( \mathbb{R}_x \times \mathbb{R}_v )}.
\end{equation}
\end{proposition}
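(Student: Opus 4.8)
The plan is to combine the $L^1$ Klainerman-Sobolev inequality of Lemma \ref{lem:ksi} with the commutation formulae for velocity averages in Lemma \ref{lem:vac}, exactly as foreshadowed in the text immediately preceding the statement. First I would apply Lemma \ref{lem:ksi} to the scalar function $\psi := \rho(f)$, which is indeed a function of $x$ alone (for each fixed $t$), obtaining
\begin{equation*}
|\rho(f)|(x) \lesssim \frac{1}{(1+t+|x|)^n} \sum_{|\alpha| \le n,\, Z^\alpha \in \Gamma_s^{|\alpha|}} \| Z^\alpha(\rho(f)) \|_{L^1(\mathbb{R}^n_x)}.
\end{equation*}
The remaining task is then purely to rewrite each macroscopic quantity $Z^\alpha(\rho(f))$ in terms of velocity averages of microscopic commuted fields $Z^\beta f$.

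The key step is an induction on $|\alpha|$ using the single-derivative identity $Z\rho(f) = \rho(Z(f)) + c_Z \rho(f)$ from Lemma \ref{lem:vac}, where $c_Z$ is a constant (equal to $n$ for the spatial scaling and $0$ otherwise) and where the $Z$ on the right is the microscopic counterpart of the macroscopic $Z$ on the left. Applying this repeatedly, one finds that for any $Z^\alpha \in \Gamma_s^{|\alpha|}$ there exist constants $d_\beta^\alpha$ such that $Z^\alpha(\rho(f)) = \sum_{|\beta| \le |\alpha|,\, Z^\beta \in \gamma_s^{|\beta|}} d_\beta^\alpha\, \rho(Z^\beta f)$; here one uses that $\Gamma_s$ is stable in the sense that the microscopic versions of the vector fields in $\Gamma_s$ lie in $\gamma_s$. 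Then, using $|\rho(g)| \le \rho(|g|)$ and Fubini,
\begin{equation*}
\| Z^\alpha(\rho(f)) \|_{L^1(\mathbb{R}^n_x)} \lesssim \sum_{|\beta| \le |\alpha|,\, Z^\beta \in \gamma_s^{|\beta|}} \| \rho(Z^\beta f) \|_{L^1(\mathbb{R}^n_x)} = \sum_{|\beta| \le |\alpha|,\, Z^\beta \in \gamma_s^{|\beta|}} \| Z^\beta f \|_{L^1(\mathbb{R}^n_x \times \mathbb{R}^n_v)}.
\end{equation*}
Substituting this into the displayed consequence of Lemma \ref{lem:ksi} and absorbing the (finitely many, dimension-dependent) constants into the $\lesssim$ yields precisely \eqref{es:gksiva0}, since the sum over $|\beta| \le n$ already subsumes all lower-order terms produced.

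There is essentially no serious obstacle here; the statement is a direct corollary of the two lemmas already established, and the only thing to be careful about is bookkeeping — namely checking that every vector field produced on the microscopic side genuinely lies in $\gamma_s$ (so that the index set in \eqref{es:gksiva0} is correct) and that the constant $c_Z \rho(f)$ terms, having order strictly less than the term they come from, are harmlessly absorbed into the sum up to order $n$. One should also note that $t > 0$ is assumed only so that $1+t+|x|$ is comparable to the quantity appearing in Lemma \ref{lem:ksi}; in fact the inequality of Lemma \ref{lem:ksi} is stated for arbitrary $t$, so strictly speaking the restriction $t>0$ is not even needed, but it is kept for consistency with the statement of the Bardos-Degond estimate above.
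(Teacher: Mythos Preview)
Your proposal is correct and follows essentially the same approach as the paper: apply Lemma~\ref{lem:ksi} to $\psi=\rho(f)$ and then use Lemma~\ref{lem:vac} iteratively to convert macroscopic $Z^\alpha(\rho(f))$ into velocity averages of microscopic $Z^\beta f$. The paper only sketches the single-vector-field step and says ``Combined with the previous Klainerman-Sobolev inequality, we obtain'', whereas you have spelled out the induction and the bookkeeping explicitly.
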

Note that the above inequality cannot be apply to $|f|$ even is $f$ is say a smooth, compactly supported function. Indeed, if $|f|$ is say in $W^{n,p}$, then $|f|$ is in $W^{1,p}$ but, unless $f$ has some extra special properties, $|f| \notin W^{n,p}$ when $n \ge 2$. On the other hand, \eqref{es:bd} clearly holds both for $f$ and for $|f|$.

Two disctinct steps lead to the proof of \eqref{es:gksiva0}, the $L^1$ Klainerman-Sobolev inequality of Lemma \ref{lem:ksi} and the special commutation properties of the velocity averaging operator as described in Lemma \ref{lem:vac}. To improve upon \eqref{es:gksiva0}, the strategy is to try to use at the same time arguments similar to those of Lemma \ref{lem:ksi} and Lemma \ref{lem:vac}, instead of applying them one after the other. This will lead to us to the following improvement. 

\begin{proposition}[Global Klainerman-Sobolev inequality for velocity averages of absolute values]
 \label{prop:gksiva1}
For any sufficiently regular function $f$ defined on $\mathbb{R}^n_x \times \mathbb{R}^n_v$, we have, for all $t > 0$ and all $x \in \mathbb{R}^n$, 
\begin{equation}
|\rho(|f|)|(x) \lesssim \frac{1}{\left(1+t+|x| \right)^n} \sum_{|\alpha| \le n, Z^\alpha \in \gamma_s^{|\alpha|}} ||  Z^\alpha f  ||_{L^1( \mathbb{R}_x \times \mathbb{R}_v )}. \label{es:gksiva1}
\end{equation}
\end{proposition}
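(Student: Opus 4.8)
The plan is to mimic the proof of Lemma~\ref{lem:ksi}, but to replace the rescaling in the $x$ variable alone by an iterated application of the fundamental theorem of calculus \emph{along the flows of the microscopic vector fields} of $\gamma_s$. Two features make this succeed where the first argument fails. First, since each such flow moves both $x$ and $v$, the changes of variables that follow the one-dimensional integrations can be carried out jointly in $(x,v)$ and produce exactly $\|Z^\alpha f\|_{L^1(\mathbb{R}^n_x\times\mathbb{R}^n_v)}$ with $Z^\alpha\in\gamma_s^{|\alpha|}$; there is then no need to rewrite a macroscopic vector field in terms of microscopic ones, and hence none of the $v$-derivative correction terms that would otherwise fail to be integrable. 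Second, every one-dimensional integration is performed on $f$ itself, which is smooth, the absolute value being taken only at the very end; so no derivative of $|f|$ ever appears and there is no regularity obstruction. This is precisely the announced ``mixing'' of the velocity-averaging step with the Sobolev step.

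I would first establish the time-decay estimate, which already displays the mechanism. Assume $t>0$ and set $Y_i:=t\partial_{x^i}+\partial_{v^i}\in\gamma_s$; the flow of $Y_i$ at parameter $s$ sends $(x,v)$ to $(x+st\,e_i,\,v+s\,e_i)$, and along it $\frac{d}{ds}f=(Y_i f)$. By the fundamental theorem of calculus and the decay of $f$,
\[
|f|(t,x,v)\le\int_0^{+\infty}\bigl|(Y_1 f)(t,\,x+s_1 t\,e_1,\,v+s_1\,e_1)\bigr|\,ds_1 .
\]
Applying the same inequality to the smooth function $Y_1 f$ along $Y_2$, then to $Y_2 Y_1 f$ along $Y_3$, and so on, after $n$ steps one gets
\[
|f|(t,x,v)\le\int_{(\mathbb{R}_+)^n}\bigl|(Y_n\cdots Y_1 f)(t,\,x+t\sigma,\,v+\sigma)\bigr|\,d\sigma ,\qquad \sigma\in\mathbb{R}^n .
\]
Integrating in $v$ with the substitution $w=v+\sigma$ (Jacobian $1$) and then substituting $\xi=x+t\sigma$ (Jacobian $t^{-n}$, where $t>0$ is used) yields
\[
\rho(|f|)(t,x)\le\frac{1}{t^{n}}\,\|Y_n\cdots Y_1 f\|_{L^1(\mathbb{R}^n_x\times\mathbb{R}^n_v)}\le\frac{1}{t^{n}}\sum_{|\alpha|\le n,\ Z^\alpha\in\gamma_s^{|\alpha|}}\|Z^\alpha f\|_{L^1(\mathbb{R}^n_x\times\mathbb{R}^n_v)} .
\]

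Next I would run the same scheme with the remaining fields of $\gamma_s$. The translations $\partial_{x^i}$ (flow $(x,v)\mapsto(x+s\,e_i,v)$, with $v$ inert) give the unweighted bound $\rho(|f|)(t,x)\lesssim\sum_{|\alpha|\le n,\,Z^\alpha\in\gamma_s^{|\alpha|}}\|Z^\alpha f\|_{L^1}$. For the spatial decay $|x|^{-n}$, relevant when $|x|\gtrsim 1+t$, I would fix $i$ with $|x^i|\gtrsim|x|$ and use the $n$ microscopic fields $\Omega^x_{ij}+\Omega^v_{ij}$ for $j\ne i$, which move $x$ tangentially to $\{|y|=|x|\}$ at effective speed $\sim|x|$, together with $S^x+S^v$, which moves $x$ radially at effective speed $\sim|x|$; these $n$ directions span $\mathbb{R}^n$ at $x$, so the $n$-fold integration scheme applies. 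Since the rotation flows are periodic, the fundamental theorem of calculus along them must be replaced by $\|u\|_{L^\infty(I)}\lesssim\|u'\|_{L^1(I)}+|I|^{-1}\|u\|_{L^1(I)}$ on a short interval $I$, and the resulting zeroth-order term --- an average of $|f|$ over a small spherical cap at radius $\sim|x|$ --- would then be absorbed by combining with the radial and translation estimates, exactly as in the classical Klainerman--Sobolev argument; the accompanying changes of variables in $v$ are harmless, since rotations preserve $d^nv$ and $S^x+S^v$ scales it by a factor bounded over the bounded range of parameters involved. This gives $\rho(|f|)(t,x)\lesssim|x|^{-n}\sum_{|\alpha|\le n,\,Z^\alpha\in\gamma_s^{|\alpha|}}\|Z^\alpha f\|_{L^1}$, and since $1+t+|x|\sim\max(1,t,|x|)$ with $\bigl(\max(1,t,|x|)\bigr)^{-n}=\min(1,\,t^{-n},\,|x|^{-n})$, taking the minimum of the three estimates gives \eqref{es:gksiva1}. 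The step I expect to be the main obstacle is this spatial-decay estimate: organizing the $n$-fold iteration along the periodic rotation flows, controlling the zeroth-order spherical-cap averages, and splicing the region $|x|\lesssim 1+t$ (treated with the $Y_i$ and the translations) with the region $|x|\gtrsim 1+t$ --- routine but tedious bookkeeping of the vector field method. The essential new point, and the reason the inequality survives the replacement of $f$ by $|f|$, is only that every one-dimensional integration is performed on $f$ before the absolute value is taken, and that working with the microscopic flows keeps every vector field inside $\gamma_s$.
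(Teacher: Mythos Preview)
Your argument is correct, and it shares the paper's key insight --- iterate the one-dimensional step on the smooth functions $f,\,Zf,\,Z^2f,\dots$ so that at most one derivative of an absolute value is ever taken --- but the implementation is genuinely different. The paper does not split into the three regimes $1$, $t$, $|x|$ and does not integrate along flows. It sets $\widetilde\psi(y)=\int_v|f|\bigl(x+(t+|x|)y,v\bigr)\,dv$ on the ball $B_n(0,1/2)$, applies a one-dimensional Sobolev inequality in each Cartesian direction $y_i$, and at each step rewrites $(t+|x|)\,\partial_{x_i}|f|$ under the $v$-integral as $t\,\partial_{x_i}|f|+|x|\,\partial_{x_i}|f|$; the first piece becomes $(t\partial_{x_i}+\partial_{v_i})|f|$ after adding a total $v$-derivative, and the second is handled by the pointwise identity of Lemma~\ref{lem:ivfs} together with the $v$-parts of the rotations and scaling (again inserted as total $v$-derivatives). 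A single change of variables $z=(t+|x|)y$ at the end produces $(t+|x|)^{-n}$ in one stroke. Your flow-based scheme is arguably cleaner for the pure $t^{-n}$ bound (no derivative of $|f|$ at all, no zeroth-order Sobolev remainder), but the price is exactly the obstacle you flag: assembling the $|x|^{-n}$ bound from the periodic rotation flows and the scaling flow, computing the joint $(s,v)\mapsto(\xi,w)$ Jacobian, absorbing the lower-order Sobolev terms, and finally splicing the three regimes together. The paper's unified $(t+|x|)$-rescaling avoids all of this bookkeeping.
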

\begin{proof}
Let us assume that $f$ is smooth and compactly supported for simplicity. 

Define $\widetilde{\psi}$ similarly to \eqref{eq:psit} as
\begin{eqnarray*}
\widetilde{\psi}:\, B_n(0,1/2) &\rightarrow& \mathbb{R} \\
y &\rightarrow& \widetilde{\psi}(y):=\int_{v \in \mathbb{R}^n}|f|(x+(t+|x|)y,v)d^n v.
\end{eqnarray*}
Next, recall that for any $\psi \in W^{1,1}$, $|\psi| \in W^{1,1}$ and $\partial |\psi|= \frac{\psi}{|\psi|} \partial \psi$ (in the sense of distribution, see for instance \cite{ll:a}, Chap 6.17) so that in particular $\left| \partial |\psi| \right| \le  |\partial \psi|$.
Let us write $y=(y_1,..,y_n)$ and let $\delta= \frac{1}{4n}$. Using a $1$ dimensional Sobolev inequality, we have 
\begin{equation}\label{eq:rp1}
|\widetilde{\psi}(0)|\lesssim \int_{|y_1| \le \delta^{1/2}}\left( \left| \partial_{y_1}\widetilde{\psi}(y_1,0,..,0)\right| + \left|\widetilde{\psi}(y_1,0,..,0)\right| \right)dy_1.
\end{equation}
Now, 
\begin{eqnarray}
\partial_{y_1}\widetilde{\psi}(y_1,0,..,0)&=&\int_{v \in \mathbb{R}^n} (t+|x|)\partial_{x_1}|f|\left(x+(t+|x|)(y_1,0,..,0)\right)d^n v, \nonumber \\
&=&\int_{v \in \mathbb{R}^n} t\partial_{x_1}|f|\left(t,x+(t+|x|)(y_1,0,..,0)\right)d^n v \nonumber \\
&&\hbox{}+\int_{v \in \mathbb{R}^n} |x|\partial_{x_1}|f|\left(x+(t+|x|)(y_1,0,..,0)\right)d^n v. \label{eq:sts}
\end{eqnarray}
As before, we can introduce total derivatives in $v$. For instance, 
\begin{eqnarray*}
&&\int_{v \in \mathbb{R}^n} t\partial_{x_1}|f|\left(x+(t+|x|)(y_1,0,..,0)\right)d^n v= \\
&&\hbox{}\quad \quad \int_{v \in \mathbb{R}^n}\left(t\partial_{x_1}+\partial_{v_1} \right)|f|\left(x+(t+|x|)(y_1,0,..,0)\right)d^n v, \\
&&\le \int_{v \in \mathbb{R}^n}\left| \left(t\partial_{x_1}+\partial_{v_1} \right)f\left(t,x+(t+|x|)(y_1',0,..,0)\right) \right|d^n v,
\end{eqnarray*}
and, using that $(y_1,0,..,0) \in B_n(0,1/2)$ if $|y_1| \le \delta^{1/2}$, a similar argument can be used to handle the second term on the right-hand side of \eqref{eq:sts}.
This gives us
\begin{eqnarray*}
\left| \partial_{y_1}\widetilde{\psi}(y_1,0,..,0) \right| & \le &  \sum_{Z_s \in \gamma_s} \int_{v \in \mathbb{R}^n}\left| Z f\left(x+(t+|x|)(y_1,0,..,0)\right) \right|d^n v.
\end{eqnarray*}
Combined with \eqref{eq:rp1}, we have obtained that 
\begin{eqnarray*}
|\widetilde{\psi}(0)| &\lesssim& \sum_{Z_s \in \gamma_s}\int_{|y_1| \le \delta^{1/2}} \int_{v \in \mathbb{R}^n}\left| Z f\left(x+(t+|x|)(y_1,0,..,0)\right) \right|d^n v dy_1\\
&&\hbox{}+ \int_{|y_1| \le \delta^{1/2}} \int_{v \in \mathbb{R}^n}\left| f\left(x+(t+|x|)(y_1,0,..,0)\right) \right|d^n v dy_1
.
\end{eqnarray*}
We now repeat the same argument varying the variable $y_2$. 

\begin{eqnarray*}
&&|\widetilde{\psi}(0)| \lesssim \int_{|y_1| \le \delta^{1/2}}\int_{|y_2| \le \delta^{1/2}} \\
&&\hbox{}\quad\quad\quad \sum_{\substack{ |\alpha|\le 2,\\ Z^{\alpha} \in \gamma^{|\alpha|}_s} } \int_{v \in \mathbb{R}^n}\left| Z^\alpha f\left(x+(t+|x|)(y_1,y_2,0,..,0)\right) \right|d^n v dy_1 dy_2.
\end{eqnarray*}
Iterating again this argument until all variables $y_i$ appears in the integral of the right-hand side, we obtain
$$
|\widetilde{\psi}(0)| \lesssim  \int_{y \in B_n(0,1/2)} \sum_{\substack{ |\alpha|\le n, \\ Z^{\alpha} \in \gamma^{|\alpha|}_s} } \int_{v \in \mathbb{R}^n}\left| Z^\alpha f\left(x+(t+|x|)y\right) \right|d^n v dy,
$$
and the conclusion of the proof follows as in the proof of Lemma \ref{lem:ksi} by the change of variable $z=(t+|x|)y$.
\end{proof}

We now recall that if $f$ is a solution to $T(f)=0$, then, in view of the commutation properties of Lemma \eqref{lem:cp} and standard properties of differentiation of the absolute value, so are $|f|$ and the commuted fields $|Z^\alpha f|$. Thus, from Lemma \ref{lem:cl}, all the norms on the right-hand side of \eqref{es:gksiva1} are preserved by the flow and we have obtained the decay estimate
\begin{proposition}[Decay estimates for velocity averages]
For any sufficiently regular solution $f$ to $T(f)=0$, we have, for all $t > 0$ and all $x \in \mathbb{R}^n$, 
\begin{equation}\label{es:gksiva}
|\rho(|f|)|(t,x) \lesssim \frac{1}{\left(1+t+|x| \right)^n} \sum_{|\alpha| \le n, Z^\alpha \in \gamma^{|\alpha|}_s} ||  Z^\alpha f(t=0)  ||_{L^1( \mathbb{R}_x \times \mathbb{R}_v )}.
\end{equation}
\end{proposition}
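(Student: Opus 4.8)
The plan is to combine the Klainerman--Sobolev inequality for velocity averages of absolute values (Proposition \ref{prop:gksiva1}) with the conservation of the $L^1$ norms of the commuted fields along the free transport flow, so the proof reduces to two essentially independent facts established earlier.

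First, I would fix $t > 0$ and apply Proposition \ref{prop:gksiva1} to the function $(x,v) \mapsto f(t,x,v)$, which yields
\[
|\rho(|f|)|(t,x) \lesssim \frac{1}{\left(1+t+|x|\right)^n} \sum_{|\alpha| \le n,\, Z^\alpha \in \gamma_s^{|\alpha|}} \| Z^\alpha f(t) \|_{L^1(\mathbb{R}^n_x \times \mathbb{R}^n_v)}.
\]
It then remains only to bound each $\| Z^\alpha f(t) \|_{L^1}$ by the corresponding norm of the initial data.

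Second, for $Z^\alpha \in \gamma_s^{|\alpha|}$ every vector field in the composition lies in $\gamma_s$, hence is a translation, a microscopic uniform motion, a microscopic rotation, or the microscopic spatial scaling, and for each of these $[T,Z]=0$ by Lemma \ref{lem:cp}. Iterating gives $[T,Z^\alpha]=0$, so $T(Z^\alpha f) = Z^\alpha(T f) = 0$: each commuted field is again a solution of the free transport equation. Invoking Lemma \ref{lem:cl} with $T_\phi = T$ (so $\partial_x\phi = 0$), $p=1$ and $q=0$, applied to $Z^\alpha f$, then gives $\| Z^\alpha f(t) \|_{L^1} \le \| Z^\alpha f(0) \|_{L^1}$ for all $t$ (in fact with equality). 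Substituting into the displayed estimate yields \eqref{es:gksiva}.

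No step is a genuine obstacle: the only points requiring a little care are that the commutation is \emph{exact} on $\gamma_s$ --- the vector fields carrying time derivatives, $\partial_t$ and $t\partial_t + \sum_i x^i\partial_{x^i}$, are excluded from $\gamma_s$, so the conservation law applies without error terms --- and that $\|Z^\alpha f(t)\|_{L^1} = \| \,|Z^\alpha f|(t)\,\|_{L^1}$, so that the transition from $f$ to $|f|$ has already been absorbed into Proposition \ref{prop:gksiva1} and no delicate differentiation of $|\cdot|$ is needed here.
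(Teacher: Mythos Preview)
Your proof is correct and follows essentially the same approach as the paper: apply Proposition \ref{prop:gksiva1} at time $t$, then use the commutation properties of Lemma \ref{lem:cp} together with the conservation law of Lemma \ref{lem:cl} (with $T_\phi=T$) to replace the $L^1$ norms at time $t$ by those at time $0$. The paper phrases the second step slightly differently, noting that $|Z^\alpha f|$ itself solves $T(|Z^\alpha f|)=0$, but the content is the same.
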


\begin{remark}
If we restrict the set of vector fields only to the uniform motions $t \partial_{x^i} + \partial_{v^i}$, then we still obtain the time decay estimate, 
\begin{eqnarray} \label{eq:tdvf}
|\rho(|f|)|(t,x) \lesssim \frac{1}{t^n} \sum_{\substack{ |\alpha| \le n,\\ Z^{\alpha_i}=t \partial_{x^j} + \partial_{v^j}}} ||Z^\alpha f(t=0)||_{L^1( \mathbb{R}_x \times \mathbb{R}_v )}.
\end{eqnarray}
Now, note that the vector fields of the form $t \partial_{x^j} + \partial_{v^j}$ degenerate to $\partial_{v^j}$ when evalulated at $t=0$.
Since from the Sobolev inequality, we have that
$$|| f(x,.) ||_{L^\infty(\mathbb{R}^n_v)} \lesssim \sum_{|\alpha| \le n} || \partial_{v}^\alpha f(x,.) ||_{L^1(\mathbb{R}_v)},$$
it follows that \eqref{eq:tdvf} is striclty weaker than the inequality \eqref{es:bd}. In some sense, the fact that our method gives us a slightly worse estimate reflects its robustness and thus its appropriateness to deal with non-linear problems.
\end{remark}
\begin{remark}Estimates similar to \eqref{es:gksiva} hold for the relativistic transport operator $T_{m}=\sqrt{m^2+|v|^2} \partial_t + \sum_{i=1}^n v^i \partial_{x^i} $, where $m=0$ for massless particles\footnote{In that case, the decay estimates are worse near the cone $t=|x|$, as for the wave equation, see again \cite{fjs:vfmrgt}.} and $m>0$ for massive particles. These estimates are presented in \cite{fjs:vfmrgt} together with non-linear applications to the Vlasov-Nordstr\"om system. Interestingly, in the case of the relativistic transport operator, the estimates obtained have additional benefits. Decay estimates for relativistic operators in the style of the Bardos-Degond estimate \eqref{es:bd} typically require the extra assumptions of compact support in $v$ of the solution, while for the estimates obtained via the vector field method, only the finiteness of the $L^1_{x,v}$ norms of the commuted fields are required. See \cite{fjs:vfmrgt}. 
\end{remark}

As is classical, derivatives enjoy better decay properties as follows\footnote{Note that however, quantities such as $\rho\left( \left|\partial_{x} f \right| \right)$ do not typically enjoy any additional decay.}
\begin{proposition}[Improved decay estimates for derivatives] \label{prop:idd}
For any sufficiently regular function $f$ defined on $\mathbb{R}^n_x \times \mathbb{R}^n_v$, we have, for all $x \in \mathbb{R}^n$ and all multi-index $\alpha$
$$
| \rho(\partial_{x}^\alpha (f) )|(x) \lesssim \frac{1}{\left(1+t+|x| \right)^{n+| \alpha |}} \sum_{|\beta| \le n+|\alpha|, Z^\beta \in \gamma^{|\beta|}_s} ||  Z^\beta f  ||_{L^1( \mathbb{R}_x \times \mathbb{R}_v )},
$$
Similarly, for any sufficiently regular function $f$ defined on $\mathbb{R}_t \times \mathbb{R}^n_x \times \mathbb{R}^n_v$, we have, for all $t \ge 0$ all $x \in \mathbb{R}^n$ and all $\tau \in \mathbb{N}$,
$$
| \rho(\partial_{x}^\alpha \partial_{t}^\tau (f) )|(t,x) \lesssim \frac{1}{\left(1+t+|x| \right)^{n+|\alpha|} \left(1+t\right)^{\tau}}\sum_{|\beta| \le n+\tau+|\alpha|, Z^\beta \in \gamma^{|\beta|}} ||  Z^\beta f  ||_{L^1( \mathbb{R}_x \times \mathbb{R}_v )},
$$
\end{proposition}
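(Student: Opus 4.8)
The plan is to run the standard ``derivatives decay faster'' argument of the vector field method: trade each of the $|\alpha|$ spatial derivatives (resp.\ each of the $\tau$ time derivatives) for a factor of $(1+t+|x|)^{-1}$ (resp.\ $(1+t)^{-1}$) together with one commutation vector field, and then close the estimate using the Klainerman--Sobolev inequality for velocity averages \eqref{es:gksiva0} that has already been established. Since $\partial_{x^i}$ and $\partial_t$ commute with $\rho$ (Lemma \ref{lem:vac} and differentiation under the integral sign), it suffices to estimate $\partial_x^\alpha$, resp.\ $\partial_x^\alpha \partial_t^\tau$, applied to the scalar function $x\mapsto\rho(f)(t,x)$.

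First I would split the estimate of $\rho(\partial_x^\alpha f)$ according to whether $t+|x|\le 1$ or $t+|x|\ge 1$, since $(1+t+|x|)^{n+|\alpha|}$ is comparable to $1$ on the first region and to $(t+|x|)^{n+|\alpha|}$ on the second; this also neutralises the mild degeneracy of the weighted identity of Lemma \ref{lem:ttks} near the origin. On $\{t+|x|\le 1\}$ there is nothing to gain from the weight: applying \eqref{es:gksiva0} directly to the function $\partial_x^\alpha f$ --- which is a composition of $|\alpha|$ translations, hence of $\gamma_s$-fields --- and absorbing the extra $\gamma_s$-fields gives $|\rho(\partial_x^\alpha f)|\lesssim\sum_{|\beta|\le n+|\alpha|,\,Z^\beta\in\gamma_s^{|\beta|}}||Z^\beta f||_{L^1(\mathbb{R}^n_x\times\mathbb{R}^n_v)}$, which already has the claimed form there. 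On $\{t+|x|\ge 1\}$, Lemma \ref{lem:ttks} gives $(t+|x|)^{|\alpha|}\partial_x^\alpha\rho(f)=\sum_{|\gamma|\le|\alpha|,\,Z^\gamma\in\Gamma_s}C_\gamma Z^\gamma\rho(f)$ with bounded coefficients; iterating the commutation relation $Z\rho(g)=\rho(\tilde Z g)+c_Z\rho(g)$ of Lemma \ref{lem:vac} rewrites each $Z^\gamma\rho(f)$ as $\sum_{|\delta|\le|\gamma|}\tilde C_\delta\,\rho(Z^\delta f)$ with $Z^\delta\in\gamma_s^{|\delta|}$; and then \eqref{es:gksiva0} applied to each $\rho(Z^\delta f)$ yields
$$
|\rho(\partial_x^\alpha f)|(t,x)\lesssim\frac{1}{(t+|x|)^{|\alpha|}(1+t+|x|)^n}\sum_{|\beta|\le n+|\alpha|,\,Z^\beta\in\gamma_s^{|\beta|}}||Z^\beta f||_{L^1(\mathbb{R}^n_x\times\mathbb{R}^n_v)} .
$$
On $\{t+|x|\ge 1\}$ the denominator is $\gtrsim(1+t+|x|)^{n+|\alpha|}$, which completes the proof of the first estimate.

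For the estimate with $\partial_t^\tau$ the scheme is identical, with one additional ingredient. Although $t\partial_t$ does not commute with $\rho$, an integration by parts in $v$ gives $\rho\big((t\partial_t-S^v)f\big)=t\partial_t\rho(f)+n\rho(f)$, so $t\partial_t$ acting on a velocity average equals, up to lower order, the velocity average of the commuting microscopic field $t\partial_t-S^v\in\gamma$ (cf.\ Remark \ref{rk:acvf}). Writing $t^\tau\partial_t^\tau$ as a polynomial of degree $\tau$ in the operator $t\partial_t$ (and using that $t^\tau$ commutes with $\partial_x^\alpha$), one extracts, for $t\ge 1$, the factor $(1+t)^{-\tau}$ at the price of $\tau$ further commutation fields --- now general elements of $\gamma$, not merely $\gamma_s$, which is exactly why the final sum ranges over $\gamma^{|\beta|}$. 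For $0\le t\le1$ the factor $(1+t)^\tau$ is harmless and one simply repeats the argument of the first part with $\partial_t^\tau f$ in place of $f$ (each $Z^\beta\partial_t^\tau f$ being a composition of $|\beta|+\tau\le n+|\alpha|+\tau$ fields of $\gamma$). Combining the two $t$-regions with the two $(t+|x|)$-regions gives the second estimate.

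The only genuine difficulty is bookkeeping: one must check at each commutation step that the terms produced are of strictly lower order (so that the counts close at $|\beta|\le n+|\alpha|$, resp.\ $|\beta|\le n+\tau+|\alpha|$) and that the vector fields stay in the right class --- $\gamma_s$ as long as no time derivative is present, $\gamma$ once $\partial_t$ enters. This is the routine mechanism by which derivatives gain extra decay in the vector field method, and it introduces no idea beyond \eqref{es:gksiva0} and Lemma \ref{lem:ttks}.
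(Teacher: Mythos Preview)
Your proof is correct and follows essentially the same approach as the paper: the first part combines Lemma \ref{lem:ttks}, Lemma \ref{lem:vac} and the Klainerman--Sobolev inequality \eqref{es:gksiva0}, and the second part uses the identity $t\partial_t\rho(f)=\rho\big[(t\partial_t-\sum_i v^i\partial_{v^i})f-nf\big]$ from Remark \ref{rk:acvf} to trade each $\partial_t$ for a $\gamma$-field plus a factor of $t^{-1}$. Your explicit splitting into the regions $\{t+|x|\le1\}$ and $\{t\le1\}$ makes the handling of the weights more transparent than the paper's terse proof, but the mechanism is identical.
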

\begin{proof}


The first part of the proposition is a consequence of Lemma \ref{lem:ttks}, Lemma \ref{lem:vac} and the global Klainerman Sobolev inequality \eqref{es:gksiva0}.

The second part of the proposition follows similarly, using that
$$
t \partial_t \rho (f)= \rho (t \partial_t f)= \rho\left[ t \partial_t f -\sum_{i=1}^n v^i \partial_{v^i}f-n f \right]
$$
with $t \partial_t -\sum_{i=1}^n v^i \partial_{v^i}$ being a linear combination of commuting vector fields as explained in Remark \ref{rk:acvf}. 
\end{proof}
\begin{remark}In the above proposition, additional $t$ derivatives yield only additional $t$ decay and no improvement in terms of $|x|$ decay. This improvement can also be achieved assuming stronger decay in $v$ of the initial data. 
More precisely, if $f$ is a solution to $T(f)=0$ then 
$$
\partial_t f= -\sum_{i=1}^n v^i \partial_{x^i} f=\sum_{i=1}^n  \partial_{x^i} (-v^i f).
$$
Note now that if $f$ is a solution, so is $v^i f$, for any $v^i$. Thus, for all $i$, $\int_{v} \partial_{x^i} (-v^i f) dv$ enjoys the additional decay $(t+|x|)$ as stated in Proposition \ref{prop:idd}, provided that the $L^1$ norms of $Z^\alpha (v^i f )$ are finite for $|\alpha| \le n+1$. Iterating the procedure, we obtain that each $\partial_t$ derivatives gives an additional decay of $(t+|x|)$.
\end{remark}

For the applications to the Vlasov-Poisson system of this paper, we will also need an $L^2$ based Klainerman-Sobolev inequality to estimate $\nabla Z^\alpha \phi$ pointwise, for $\phi$ a solution to the Poisson equation \eqref{eq:vp2}. The estimate that we will used is contained in the following lemma.
\begin{lemma}\label{lem:ksmp}
For any sufficiently regular function of $\psi$ defined on $\mathbb{R}^n_x$, we have, for all $t\ge0$, 
$$
|\psi(x)| \lesssim \frac{1}{\left(1+t+|x| \right)^{n/2}} \sum_{|\alpha| \le (n+2)/2, Z^\alpha \in \Gamma_s} || Z^\alpha (\psi) ||_{L^2(\mathbb{R}_x^n)}.$$
\end{lemma}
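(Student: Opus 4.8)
The plan is to mimic the proof of Lemma \ref{lem:ksi}, replacing the $W^{n,1}(\mathbb{R}^n)\hookrightarrow L^\infty$ embedding by the Sobolev embedding $W^{k,2}(\mathbb{R}^n)\hookrightarrow L^\infty(\mathbb{R}^n)$, which holds precisely when $k>n/2$, i.e.\ for $k=\lceil (n+1)/2\rceil \le (n+2)/2$; working with $k=\lfloor n/2\rfloor+1$ is cleanest, and $\lfloor n/2\rfloor + 1 \le (n+2)/2$ covers both parities. First I would fix $(t,x)$ and introduce the rescaled function $\widetilde\psi(y):=\psi(x+(t+|x|)y)$ on the ball $B_n(0,1/2)$, exactly as in \eqref{eq:psit}. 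Applying the $L^2$ Sobolev embedding on $B_n(0,1/2)$ gives
$$
|\psi(x)| = |\widetilde\psi(0)| \lesssim \sum_{|\alpha|\le (n+2)/2} \|\partial_y^\alpha \widetilde\psi\|_{L^2(B_n(0,1/2))}.
$$
Next I would convert $y$-derivatives into $x$-derivatives: $\partial_{y^i}\widetilde\psi(y) = (t+|x|)\,\partial_{x^i}\psi(x+(t+|x|)y)$, so that $|\partial_y^\alpha\widetilde\psi(y)| \lesssim \sum_{|\beta|\le|\alpha|}(t+|x|)^{|\beta|}|\partial_x^\beta\psi(x+(t+|x|)y)|$, and then invoke Lemma \ref{lem:ttks} together with the fact that $t+|x|$ and $t+|x+(t+|x|)y|$ are comparable for $y\in B_n(0,1/2)$ to bound this by $\sum_{|\beta|\le|\alpha|,\,Z^\beta\in\Gamma_s}|Z^\beta\psi(x+(t+|x|)y)|$.

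Finally I would perform the change of variables $z=(t+|x|)y$ in the $L^2$ norm over $B_n(0,1/2)$. This is the one place where the $L^2$ case differs quantitatively from the $L^1$ case: the Jacobian of the substitution is $(t+|x|)^{-n}\,dz$ inside a norm that is squared, so
$$
\|\partial_y^\alpha\widetilde\psi\|_{L^2(B_n(0,1/2))}^2 \lesssim \int_{B_n(0,1/2)} \Big(\sum_{|\beta|\le|\alpha|,\,Z^\beta\in\Gamma_s}|Z^\beta\psi|(x+(t+|x|)y)\Big)^2 dy = (t+|x|)^{-n}\int \Big(\sum |Z^\beta\psi|\Big)^2 dz,
$$
which yields the factor $(t+|x|)^{-n/2}$ after taking square roots. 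Replacing $(t+|x|)$ by $(1+t+|x|)$ — harmless since one only loses a constant on the region $t+|x|\le 1$, where $\psi$ is controlled by its $W^{k,2}$ norm on a fixed ball via the ordinary Sobolev embedding — gives the stated inequality, using that each $Z^\beta$ with $Z^\beta\in\Gamma_s^{|\beta|}$ and $|\beta|\le(n+2)/2$ appears on the right.

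There is no real obstacle here; the only point requiring minor care is the bookkeeping of the number of derivatives — checking that $\lfloor n/2\rfloor+1 \le (n+2)/2$ for all $n\ge 1$ (equality when $n$ is even, and $(n+1)/2 \le (n+2)/2$ when $n$ is odd), so that the threshold $(n+2)/2$ in the statement genuinely suffices for the $L^2$ Sobolev embedding in dimension $n$ — and the comparison between $t+|x|$ and $1+t+|x|$ near the origin, handled by the standard local Sobolev embedding. Everything else is a direct transcription of the argument for Lemma \ref{lem:ksi}.
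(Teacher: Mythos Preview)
Your proposal is correct and follows exactly the approach the paper takes: the paper's proof simply says to repeat the argument of Lemma~\ref{lem:ksi} with the rescaled function $\widetilde{\psi}(y)=\psi(x+(t+|x|)y)$, replacing the $L^1$ Sobolev inequality by the $L^2$ one $|\widetilde{\psi}(0)|^2 \lesssim \sum_{|\alpha|\le (n+2)/2}\int_{B_n(0,1/2)}|\partial^\alpha\widetilde{\psi}|^2\,dy$. Your write-up is in fact more detailed than the paper's own sketch, and your observation that the Jacobian in the $L^2$ change of variables produces the factor $(t+|x|)^{-n/2}$ rather than $(t+|x|)^{-n}$ is precisely the point.
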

\begin{proof}The proof is similar to that of Lemma \ref{lem:ksi}, considering the function $\widetilde{\psi}: y \rightarrow \psi(x+(t+|x|)y)$ and replacing the $L^1$ Sobolev inequality with the $L^2$ Sobolev inequality 

\begin{equation} 
|\psi(x)|^2=|\widetilde{\psi}(0)|^2 \lesssim \sum_{|\alpha| \le (n+2)/2} \int_{B_n(0,1/2)}|\partial^\alpha (\widetilde{\psi})|^2 dy. \nonumber
\end{equation}
\end{proof}
\section{Small data solutions of the Vlasov-Poisson system}
\subsection{The norms} \label{se:tn}
The vector fields presented in Section \ref{se:mmvf} and the resulting decay estimates of the previous section will be sufficient to prove global existence of solutions to the Vlasov-Poisson system and derive their asymptotics for all dimension $n \ge 4$. On top of the boundedness of the $L^1$ norms of commuted fields $Z^\alpha(f)$, which are needed in order to obtain pointwise decay from the vectorfield method, we will also need a little bit of additional integrability to prove the $L^p$ boundedness of the gradient of the commuted potentials $\nabla Z^\alpha \phi$.

With this in mind, for any $n \ge 4$, $N \in \mathbb{N}$ and $\delta> 0$, let us consider, for any sufficiently regular function $g$ of $(x,v)$, the norm $E_{N,\delta}$ defined by
$$
E_{N,\delta}[g]:= \sum_{|\alpha|\le N, Z^\alpha \in \gamma_s^{|\alpha|} } || Z^\alpha(g) ||_{L^1(\mathbb{R}^n_x \times \mathbb{R}^n_v)}+\sum_{|\alpha|\le N, Z^\alpha \in \gamma_s^{|\alpha|} } || (1+|v|^2)^{\frac{\delta(\delta+n)}{2(1+\delta)} } Z^\alpha(g) ||_{L^{1+\delta}(\mathbb{R}^n_x \times \mathbb{R}^n_v)}.
$$

As we shall see below, applying a similar strategy would fail in dimension $3$ due to the lack of sufficiently strong decay. In order to close the estimates, it will thus be necessary to improve the commutation relation between our commutation vector fields and our perturbed transport operator. This led us to the introduction of \emph{modified} vector fields, denoted $Y$ (and $Y^\alpha$ for a combination of $|\alpha|$ such vector fields) below. Our main results are then similar to the $n \ge 4$ case, replacing the $Z$ vector fields by the $Y$ ones.
In dimension $3$, the norm $E_{N,\delta}$ will therefore be defined as

\begin{equation}\label{eq:tn}
E_{N,\delta}[g]:= \sum_{|\alpha|\le N, Y^\alpha \in \gamma_{m,s}^{|\alpha|} } || Y^\alpha(g) ||_{L^1(\mathbb{R}^3_x \times \mathbb{R}^3_v)}+\sum_{|\alpha|\le N, Y^\alpha \in \gamma_{m,s}^{|\alpha|} } || (1+|v|^2)^{\frac{\delta(\delta+3)}{2(1+\delta)}} Y^\alpha(g) ||_{L^{1+\delta}(\mathbb{R}^3_x \times \mathbb{R}^3_v)}.
\end{equation}
The precise definitions of the modified vector fields $Y$ and of the algebra $\gamma_{m,s}$ are given in Section \ref{se:dmvfa}.
\begin{remark}
In order to close our main estimates, we will not need to commute with any vector field containing $t$ derivatives, i.e. commuting with vector fields in $\gamma_s$ if $n\ge4$ (respectively $\gamma_{m,s}$ if $n=3$) will be sufficient. Commutations with vector fields containing $t$ derivatives are of course usefull if one wants to obtain decay estimates of $t$-derivatives of $\rho(f)$ and $\nabla \phi$. In that case, one would simply modify the norms, replacing the algebra $\gamma_s$ (respectively $\gamma_{m,s}$) by the algebra $\gamma$ (respectively $\gamma_{m}$). 
The interested reader can then verify that all the arguments below still hold. For these reasons, we will sometimes omit in the following section to specify whether the vector fields considered lie in $\gamma$ (respectively $\gamma_{m}$) or in $\gamma_s$ (respectively $\gamma_{m,s}$).
\end{remark}

\subsection{The main results}\label{se:mr}
Our main results are the following
\begin{theorem}\label{th:mt}Let $n \ge 3$, $0 < \delta < \frac{n-2}{n+2}$, and $N \ge 5n/2+2$ if $n\ge 4$, $N \ge 14$ if $n=3$. Then, 
there exists $\epsilon_0 > 0$ such that for all $0< \epsilon <\epsilon_0$, if 
$E_{N,\delta}[f_0] \le \epsilon$, then the classical solution $f(t,x,v)$ of \eqref{eq:vp1}-\eqref{eq:vp3} exists globally in time and satisfies the estimates
\begin{enumerate}
\item{Global bounds}
$$\forall t \in \mathbb{R},\quad E_{N,\delta}[f(t)] \lesssim \epsilon.$$
\item{Space and time pointwise decay of averages of $f$}
$$\text{for any multi-index $\alpha$ of order $|\alpha| \le N-n$}, \quad 
|\rho( Z^\alpha f)(t,x) | \lesssim \frac{\epsilon}{\left(1+|t|+|x| \right)^{n}},
$$
as well as the improved decay estimates
$$
|\rho( \partial_{x}^\alpha f)(t,x) | \lesssim \frac{\epsilon}{\left(1+|t|+|x| \right)^{n+|\alpha|}}.$$
\item{Boundedness for $L^{1+\delta}$ norms of $\nabla^2 Z^\alpha \phi$}
$$\text{for any multi-index $\alpha$ of order $|\alpha| \le N$ }, \quad ||\nabla^2 Z^\alpha \phi ||_{L^{1+\delta}(\mathbb{R}^n)} \lesssim \epsilon.$$
\item{Space and time decay of the potential and its derivatives}
\begin{align}
\text{for any multi-index $\alpha$ with\,\,}& |\alpha| \le N-(3n/2+1), \nonumber \\
&|Z^\alpha \nabla \phi (t,x) | \lesssim \frac{\epsilon}{t^{(n-2)/2}\left(1+|t|+|x| \right)^{(n)/2}}\nonumber 
\end{align}
as well as the improved decay estimates
$$
|\partial^\alpha_x \nabla \phi (t,x) | \lesssim \frac{\epsilon}{t^{(n-2)/2}\left(1+|t|+|x| \right)^{n/2+|\alpha|}}. 
$$
\end{enumerate}
Finally, all the constants in the above inequalities depend only on $N,n,\delta$.
\end{theorem}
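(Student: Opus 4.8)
The plan is to run a continuity/bootstrap argument. Fix $n\ge 4$ first (the $n=3$ case requires the modified vector fields $Y$ and is deferred). Introduce the bootstrap quantity $\mathcal{E}(t):=E_{N,\delta}[f(t)]$ and assume on a maximal interval $[0,T^\ast)$ the bootstrap assumption $\mathcal{E}(t)\le 2C_0\epsilon$ for a suitable constant $C_0$; the goal is to improve this to $\mathcal{E}(t)\le \tfrac{3}{2}C_0\epsilon$, which by a standard continuity argument forces $T^\ast=\infty$ and yields global existence together with all the stated estimates. Throughout, the classical local existence theory supplies the solution on $[0,T^\ast)$ and the continuity of $t\mapsto\mathcal{E}(t)$.

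The core of the argument proceeds in the following steps. First, from the bootstrap assumption and the Klainerman--Sobolev inequalities of Section \ref{se:dafovfm} (Propositions \ref{prop:gksiva1} and \ref{prop:idd}) one derives the pointwise decay of $\rho(Z^\alpha f)$ and $\rho(\partial_x^\alpha f)$ at the claimed rates for $|\alpha|\le N-n$. Second, one feeds these into the elliptic estimates for the commuted Poisson equation \eqref{eq:cp}: using the Calder\'on--Zygmund inequality $\|\nabla^2 Z^\alpha\phi\|_{L^{1+\delta}}\lesssim \|\rho(Z^\beta f)\|_{L^{1+\delta}}$ together with the $L^{1+\delta}$ bounds on $Z^\alpha f$ built into $E_{N,\delta}$ (and an interpolation between the $L^1$ decay and the $L^{1+\delta}$ bound on $\rho$), one obtains the boundedness of $\|\nabla^2 Z^\alpha\phi\|_{L^{1+\delta}}$ for $|\alpha|\le N$ and, via Lemma \ref{lem:ksmp} applied to $\nabla Z^\alpha\phi$, the pointwise decay $|\nabla Z^\alpha\phi(t,x)|\lesssim \epsilon\, t^{-(n-2)/2}(1+|t|+|x|)^{-n/2}$ for $|\alpha|$ in the stated range. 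The constraint $0<\delta<\tfrac{n-2}{n+2}$ and the weight exponent $\tfrac{\delta(\delta+n)}{2(1+\delta)}$ in $E_{N,\delta}$ are precisely tuned so that this chain of elliptic/Sobolev estimates produces integrable-in-time decay for $\nabla\phi$.

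Third, one closes the energy estimate for $f$. Applying Lemma \ref{lem:cl} to $Z^\alpha f$ and using the commutation formula of Lemma \ref{lem:comf}, the growth of $E_{N,\delta}[f(t)]$ is controlled by $\int_0^t \sum \|\nabla_x Z^\gamma\phi\cdot\nabla_v Z^\beta f(s)\|_{L^1}\,ds$ (and the analogous $L^{1+\delta}$ term, where the extra $\partial_x\phi$ source term in \eqref{eq:Ewv} is also handled). The key point is that $\nabla_v Z^\beta f$ is not directly controlled --- only $Z^\beta f$ with $Z\in\gamma_s$ is --- but the weight structure and the identity relating $\partial_{v^i}$ to the difference of $t\partial_{x^i}+\partial_{v^i}$ and $t\partial_{x^i}$ lets one trade a $\partial_v$ derivative for a commuted vector field at the cost of a factor of $t$; this factor of $t$ must then be absorbed by the decay of $\nabla_x Z^\gamma\phi$ obtained in the previous step. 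For the top-order term $|\gamma|=|\alpha|$, $|\beta|=0$, one uses the pointwise decay of $\nabla_x Z^\gamma\phi$ directly on $\int\int |\nabla_v f|$, while for the term with $|\gamma|$ small one uses pointwise decay of $\nabla^2 Z^\gamma\phi$ (which requires a Sobolev inequality at a lower order, hence the lower bound $N\ge 5n/2+2$). Summing, one gets $\mathcal{E}(t)\le C_0\epsilon + C\int_0^t (1+s)^{-1-\eta}\mathcal{E}(s)\,ds$ for some $\eta>0$, and Gr\"onwall (or just the smallness of the integral, after choosing $\epsilon_0$ small) improves the bootstrap constant. Finally, the $n=3$ case is treated identically after replacing $\gamma_s$ by $\gamma_{m,s}$ and $Z^\alpha$ by $Y^\alpha$: the modified vector fields are engineered so that $[T_\phi,Y^\alpha]$ loses the borderline logarithmically-divergent term, and the elliptic and Sobolev steps go through verbatim. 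The main obstacle is this last energy-estimate step: verifying that every term produced by the commutator Lemma \ref{lem:comf}, after paying the $t$-weight needed to convert $\partial_v$ into commuted fields, is met by strictly-better-than-$t^{-1}$ decay from $\nabla\phi$ and its derivatives --- this is exactly where the dimension-dependent loss appears and where the $n=3$ modification becomes essential.
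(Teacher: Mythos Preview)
Your overall architecture --- bootstrap on $E_{N,\delta}$, Klainerman--Sobolev for $\rho$, elliptic estimates for $\phi$, then close the energy via Lemma~\ref{lem:cl} and the commutator --- matches the paper. The $\partial_{v^i}=(t\partial_{x^i}+\partial_{v^i})-t\partial_{x^i}$ trick is also the right device. However, your case distinction in the energy step is inverted, and this is precisely where the argument either closes or fails.

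After the $\partial_v$ trick one must bound $(1+t)\|\nabla_x Z^\gamma\phi\, Z^\beta f\|_{L^1_{x,v}}$ with $|\gamma|+|\beta|\le|\alpha|+1\le N+1$. You propose to use pointwise decay of $\nabla Z^\gamma\phi$ when $|\gamma|$ is \emph{top order}; but pointwise decay of $\nabla Z^\gamma\phi$ comes from Lemma~\ref{lem:ksmp} combined with the $L^2$ estimate of Lemma~\ref{lem:l2dp}, and both cost derivatives --- they are available only for $|\gamma|\le N-(3n/2+1)$, never at $|\gamma|=N$. The paper's split is the opposite of yours: when $|\gamma|\le N-(3n/2+1)$ one uses the pointwise bound $|\nabla Z^\gamma\phi|\lesssim\epsilon(1+t)^{1-n}$, producing after the $t$-loss a $(1+t)^{2-n}$ factor (integrable for $n\ge4$). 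When $|\gamma|$ is large, necessarily $|\beta|\le N-n$, and one instead applies H\"older in $x$: the factor $\nabla Z^\gamma\phi$ is placed in $L^q$ with $q=\tfrac{n(1+\delta)}{n-(1+\delta)}<2$ (this is Corollary~\ref{cor:lqb}, from Gagliardo--Nirenberg on the $L^{1+\delta}$ bound for $\nabla^2 Z^\gamma\phi$, and it carries \emph{no} time decay), while $\rho(|Z^\beta f|)$ goes in the dual $L^p$ and supplies decay $(1+t)^{-(n/2-1+\sigma)}$ from the pointwise estimate on $\rho$. The constraint $\delta<\tfrac{n-2}{n+2}$ is exactly what forces $q<2$ and hence $\sigma>0$; this is the true role of $\delta$, not the pointwise $\nabla\phi$ estimate you attribute it to. The bound $N\ge 5n/2+2$ is what guarantees that at least one of the two cases always applies. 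Your mention of ``pointwise decay of $\nabla^2 Z^\gamma\phi$'' is not used (or proved) anywhere.

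On $n=3$: the substitution $Z\to Y$ is far from verbatim. The Poisson equation must still be commuted with the unmodified $Z^\alpha$ (since $\phi$ is $v$-independent), and rewriting $Z^\alpha\rho(f)$ in terms of $\rho(Y^\beta f)$ produces multilinear expressions in the coefficients $\varphi$ (Lemmas~\ref{lem:pcm}--\ref{lem:pcmto}). The bootstrap then carries additional assumptions on $Y^\alpha\varphi$ and $\partial_x Y^\alpha\varphi$, a separate Klainerman--Sobolev inequality with modified vector fields (Proposition~\ref{es:ksmc}) must be proved, and top-order products $Y^\alpha(\varphi)Y^\beta(f)$ require their own transport estimates (Propositions~\ref{prop:ep}--\ref{prop:perg}). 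Your sketch omits all of this machinery.
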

%

As explained above, when $n \ge 4$, the proof is easier and can be performed using only the commuting vector fields of the free transport operator while in the $n=3$ case, we will need to use modified vector fields. The modified vector fields approach would of course work also in the $n \ge 4$ case\footnote{An interesting question left open by our work is to understand what happen in dimension $1$ and $2$ where decay is even sparser than in dimension $3$. We believe that at least in dimension $2$, a carefull analysis using modified vector fields would lead to similar conclusions. We hope to treat this in future work.} but are not necessary there. In order to better explain the general framework, we will treat first the dimension $n \ge 4$ before turning to the proof in the $n=3$ case.
\section{Proof when $n \ge 4$}
In this section, we assume that $n \ge 4$, that the assumptions of Theorem \ref{th:mt} are satisfied for some initial data $f_0$ and we denote by $f$ the classical solution of \eqref{eq:vp1}-\eqref{eq:vp3} arising from $f_0$.

\subsection{Bootstrap assumption}
We will assume the following bootstrap assumption on the norm of the solution $E_{N,\delta}[f(t)]$. Let $T  \ge 0$ be the largest time such that the following bounds hold
\begin{equation} \label{eq:bass}
\forall t \in [0,T],\quad E_{N,\delta}[f(t)] \le 2 \epsilon.
\end{equation}

It follows from the smallness assumptions on $E_{N,\delta}[f_0]$ and a continuity argument that $T> 0$. 

\subsection{Immediate consequence of the bootstrap assumption}
Applying our decay estimate \eqref{es:gksiva} to $\left| Z^\alpha f \right|$ and $\left| Z^\alpha f \right|^p (1+v^2)^{q/2}$ as well as the improved decay estimates of Proposition \ref{prop:idd}, we automatically obtain from our bootstrap assumption \eqref{eq:bass} 
\begin{lemma}For any multi-index $\alpha$ of order $|\alpha| \le N-n$ and for all $t \in [0,T]$,
\begin{eqnarray}\label{eq:perz}
\left|\rho\left( \left| Z^\alpha f \right|\right) (t,x) \right| \le \frac{C_n \epsilon}{\left(1+|t|+|x| \right)^{n}}, \\
\left|\rho\left( \left| Z^\alpha f \right|^{1+\delta} (1+v^2)^{\frac{\delta(\delta+n)}{2}}\right) (t,x) \right| \le \frac{C_n \epsilon^{1+\delta}}{\left(1+|t|+|x| \right)^{n}},\label{eq:perz2}
\end{eqnarray}
for some constant $C_n >0$ depending on $n$, as well as the improved decay estimates,
$$
\left|\rho\left(  \partial^\alpha f\right) (t,x) \right| \le \frac{C_n \epsilon}{\left(1+|t|+|x| \right)^{n+|\alpha|}}.$$
\end{lemma}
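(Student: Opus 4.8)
The plan is to read off all three bounds, at each fixed time $t\in[0,T]$, from the Klainerman--Sobolev inequalities of Section~\ref{se:dafovfm} applied to the \emph{commuted} fields $Z^\alpha f$ (and to suitable weighted powers of them) rather than to $f$ itself, and then to invoke the bootstrap bound \eqref{eq:bass} together with the definition of $E_{N,\delta}$. The point is that the norms produced on the right-hand sides of those inequalities involve at most $n$ additional commutations, so for $|\alpha|\le N-n$ the resulting norms never involve more than $N$ commutation vector fields and are therefore all $\le E_{N,\delta}[f(t)]\le 2\epsilon$.

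First, for \eqref{eq:perz} I would fix $t\in[0,T]$ and apply the global Klainerman--Sobolev inequality for velocity averages of absolute values \eqref{es:gksiva1} to $(x,v)\mapsto Z^\alpha f(t,x,v)$, obtaining
$$
\rho\big(|Z^\alpha f|\big)(t,x)\ \lesssim\ \frac{1}{(1+t+|x|)^{n}}\sum_{|\beta|\le n,\, Z^\beta\in\gamma_s^{|\beta|}}\big\|Z^\beta Z^\alpha f(t)\big\|_{L^1(\mathbb{R}^n_x\times\mathbb{R}^n_v)}.
$$
Each $Z^\beta Z^\alpha$ is a composition of $|\beta|+|\alpha|\le N$ vector fields of $\gamma_s$, so the sum is $\le E_{N,\delta}[f(t)]\le 2\epsilon$ by \eqref{eq:bass}, which gives \eqref{eq:perz} with $C_n$ absorbing the implicit constant. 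The improved decay for $\rho(\partial_x^\alpha f)$ follows in the same way, applying Proposition~\ref{prop:idd} directly to $f$: it produces the decay $(1+t+|x|)^{-n-|\alpha|}$ against $\sum_{|\beta|\le n+|\alpha|}\|Z^\beta f(t)\|_{L^1}$, and $n+|\alpha|\le N$.

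For the weighted bound \eqref{eq:perz2} I would set $q:=\delta(\delta+n)/(1+\delta)$, so that $(1+v^2)^{\delta(\delta+n)/2}|Z^\alpha f|^{1+\delta}=\big|(1+v^2)^{q/2}Z^\alpha f\big|^{1+\delta}$ is a non-negative function, and then re-run the proof of Proposition~\ref{prop:gksiva1} on it. The only new ingredient is the treatment of a derivative of $|h|^{1+\delta}$ with $h=(1+v^2)^{q/2}Z^\beta Z^\alpha f$: one uses $\partial(|h|^{1+\delta})=(1+\delta)|h|^{\delta}\tfrac{h}{|h|}\partial h$ together with Young's inequality $|h|^{\delta}|\partial h|\le\tfrac{\delta}{1+\delta}|h|^{1+\delta}+\tfrac{1}{1+\delta}|\partial h|^{1+\delta}$, which keeps the expression of the same form; the $v$-derivatives of the weight are absorbed by Lemma~\ref{lem:czv}, and the total-derivative-in-$v$ manipulations are unchanged since $\int_v\partial_{v^i}(|h|^{1+\delta})\,dv=\int_v\Omega_{ij}^v(|h|^{1+\delta})\,dv=0$ and $\int_v S^v(|h|^{1+\delta})\,dv=-n\int_v|h|^{1+\delta}\,dv$. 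After $n$ one-dimensional Sobolev steps and the change of variables $z=(t+|x|)y$ this yields a bound by $(1+t+|x|)^{-n}\sum_{|\beta|\le n,\, Z^\beta\in\gamma_s^{|\beta|}}\big\|(1+|v|^2)^{\delta(\delta+n)/(2(1+\delta))}Z^\beta Z^\alpha f(t)\big\|_{L^{1+\delta}}^{1+\delta}$, and for $|\alpha|\le N-n$ each norm is $\le E_{N,\delta}[f(t)]\le 2\epsilon$, giving \eqref{eq:perz2}.

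The main (indeed the only non-routine) point is this last step: because $0<\delta<1$, the function $|Z^\alpha f|^{1+\delta}$ is only $C^1$ and its higher derivatives fail to be integrable, so one cannot feed it into a Klainerman--Sobolev inequality as a black box. It is crucial that the proof of Proposition~\ref{prop:gksiva1} only ever differentiates once at a time, reintroducing the absolute value at each stage, which is exactly what makes the combination with Young's inequality legitimate. Everything else is bookkeeping, the essential constraint being that the total number of commutation fields stays $\le N$, which is precisely what the hypothesis $|\alpha|\le N-n$ ensures.
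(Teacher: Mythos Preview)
Your proof is correct and follows exactly the paper's approach: apply the Klainerman--Sobolev inequality \eqref{es:gksiva1} (and Proposition~\ref{prop:idd}) to the commuted fields, then invoke the bootstrap assumption \eqref{eq:bass}. You are in fact more careful than the paper about \eqref{eq:perz2}: the paper simply asserts one applies the decay estimate to $|Z^\alpha f|^{1+\delta}(1+v^2)^{q/2}$, glossing over the regularity issue you correctly flagged, and your Young's-inequality-at-each-step argument is precisely what is needed to make that step rigorous.
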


\subsubsection{Estimates on $Z^\alpha(\phi)$}

From standard elliptic estimates, we can also bound an $L^p$ norm of $\nabla^2 Z^\alpha \phi$

\begin{lemma}\label{lem:zpes}
For any multi-index $\alpha$ with $|\alpha|\le N$ and for all $t \in[0,T]$, $$||\nabla^2 Z^\alpha \phi(t) ||_{L^{1+\delta}(\mathbb{R}^n)} \le C_{N,n,\delta} \epsilon, $$
where $C_{N,n,\delta} > 0$ is a constant depending on $N$, $n$ and $\delta$.
\end{lemma}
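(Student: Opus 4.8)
The plan is to combine the commuted Poisson equation \eqref{eq:cp} with the Calderón–Zygmund $L^p$ theory for the operator $\nabla^2 \Delta^{-1}$, and then to estimate the resulting averages $Z^\beta \rho(f)$ in $L^{1+\delta}(\mathbb{R}^n_x)$ using the bootstrap assumption \eqref{eq:bass}. First, by the lemma producing \eqref{eq:cp}, for any multi-index $\alpha$ with $|\alpha| \le N$ the function $Z^\alpha \phi$ solves $\Delta Z^\alpha \phi = \sum_{|\beta| \le |\alpha|} C^\alpha_\beta Z^\beta \rho(f)$, and moreover (using Lemma \ref{lem:vac} to move the macroscopic fields through the velocity average) each $Z^\beta \rho(f)$ is a finite linear combination of terms of the form $\rho(Z^{\beta'} f)$ with $|\beta'| \le |\beta| \le |\alpha| \le N$ and $Z^{\beta'} \in \gamma_s^{|\beta'|}$. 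Since $1 < 1+\delta < \infty$ (because $0 < \delta < \frac{n-2}{n+2} < 1$), the standard elliptic estimate $\|\nabla^2 u\|_{L^{1+\delta}(\mathbb{R}^n)} \lesssim \|\Delta u\|_{L^{1+\delta}(\mathbb{R}^n)}$ applies, giving
\begin{equation}
\|\nabla^2 Z^\alpha \phi(t)\|_{L^{1+\delta}(\mathbb{R}^n)} \lesssim \sum_{|\beta'| \le N,\ Z^{\beta'} \in \gamma_s^{|\beta'|}} \|\rho(Z^{\beta'} f)(t)\|_{L^{1+\delta}(\mathbb{R}^n_x)}. \nonumber
\end{equation}

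The remaining point is to bound $\|\rho(g)\|_{L^{1+\delta}(\mathbb{R}^n_x)}$ for $g = Z^{\beta'} f$ by the two norms appearing in $E_{N,\delta}[f]$. The idea is to split the $v$-integral using the weight $(1+|v|^2)^{\theta/2}$ with $\theta = \frac{\delta(\delta+n)}{1+\delta}$, writing $|g| = \big[(1+|v|^2)^{\theta/2}|g|\big] \cdot (1+|v|^2)^{-\theta/2}$ and applying Hölder in $v$ with exponents $1+\delta$ and $\frac{1+\delta}{\delta}$:
\begin{equation}
\rho(|g|)(x) \le \Big(\int_{v} (1+|v|^2)^{\theta/2 \cdot (1+\delta)} |g|^{1+\delta}\, dv\Big)^{\frac{1}{1+\delta}} \Big(\int_v (1+|v|^2)^{-\frac{\theta(1+\delta)}{2\delta}}\, dv\Big)^{\frac{\delta}{1+\delta}}. \nonumber
\end{equation}
The second factor is a finite constant precisely when $\frac{\theta(1+\delta)}{\delta} > n$, i.e. $\theta > \frac{n\delta}{1+\delta}$; the choice $\theta = \frac{\delta(\delta+n)}{1+\delta}$ gives $\frac{\theta(1+\delta)}{\delta} = \delta + n > n$, so this is exactly why the weight exponent in the definition of $E_{N,\delta}$ was chosen that way. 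Then, taking the $L^{1+\delta}(\mathbb{R}^n_x)$ norm of the remaining factor and using $\|(\cdot)^{1/(1+\delta)}\|_{L^{1+\delta}_x}= \|\cdot\|_{L^1_x}^{1/(1+\delta)}$ together with Tonelli to exchange the $x$ and $v$ integrations yields
\begin{equation}
\|\rho(|g|)\|_{L^{1+\delta}(\mathbb{R}^n_x)} \lesssim \big\| (1+|v|^2)^{\theta/2}\, g \big\|_{L^{1+\delta}(\mathbb{R}^n_x \times \mathbb{R}^n_v)}. \nonumber
\end{equation}
Applying this with $g = Z^{\beta'} f$ and summing over $|\beta'| \le N$ bounds the right-hand side by $E_{N,\delta}[f(t)]$, which is $\le 2\epsilon$ by \eqref{eq:bass}, and the lemma follows with $C_{N,n,\delta}$ absorbing the elliptic constant, the combinatorial constants $C^\alpha_\beta$ and from Lemma \ref{lem:vac}, and the finite $v$-integral above.

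I do not expect any serious obstacle here: the only slightly delicate points are bookkeeping (tracking that the $Z^\alpha$ on $\phi$ are macroscopic while those hitting $f$ are microscopic, and that commuting through $\rho$ via Lemma \ref{lem:vac} only produces lower-order terms of the same type) and checking the exponent arithmetic that makes the $v$-weight integrable — which, as noted, is exactly the reason behind the specific power $\frac{\delta(\delta+n)}{2(1+\delta)}$ in the definition \eqref{eq:tn} of the norm. One should also note that the elliptic estimate requires $1+\delta \in (1,\infty)$, which holds since $\delta > 0$; no endpoint ($L^1$ or $L^\infty$) Calderón–Zygmund estimate is needed, which is precisely why a little extra integrability beyond $L^1$ was built into the norm.
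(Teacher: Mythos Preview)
Your proof is correct and follows essentially the same route as the paper: apply Calder\'on--Zygmund to the commuted Poisson equation \eqref{eq:cp}, convert $Z^\beta\rho(f)$ to $\rho(Z^{\beta'}f)$ via Lemma \ref{lem:vac}, and then use H\"older in $v$ with the weight $(1+|v|^2)^{(\delta+n)/2}$ to bound $\|\rho(|Z^{\beta'}f|)\|_{L^{1+\delta}_x}$ by the weighted $L^{1+\delta}_{x,v}$ norm appearing in $E_{N,\delta}$. The exponent bookkeeping you carry out is exactly what the paper does (with $p=1+\delta$, $p/q=\delta$), and your remark that this is the reason for the specific power $\frac{\delta(\delta+n)}{2(1+\delta)}$ in the norm is precisely the point.
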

\begin{remark}
There is obviously no difficulty in propagating higher $L^p$ norms of $\,\nabla^2 Z^\alpha \phi$ provided the initial data for $f$ satisfy additional integrability and decay in $v$. We will not need them to close the estimates of our main theorem, which is why we did not assume the initial bounds on these $L^p$ norms.
\end{remark}
\begin{proof}
Let $p=1+\delta$. From the commuted equation for $Z^\alpha \phi$ and the Calder\'on-Zygmund inequality, we have
\begin{eqnarray*}
|| \nabla^2 Z^\alpha \phi ||_{L^p( \mathbb{R}^n)} &\lesssim &|| Z^\alpha ( \rho(f)) ||_{L^p(\mathbb{R}^n)},\\
&\lesssim& \sum_{|\beta| \le |\alpha|}||  \rho \left( \left| Z^\beta (f ) \right|\right) ||_{L^p(\mathbb{R}^n)}
\end{eqnarray*}
using Lemma \ref{lem:vac}.
Thus the bounds on $\nabla^2 Z^\alpha \phi$ follow if we can prove $L^p$ bounds on the $\rho\left( \left| Z^\beta f \right|\right)$. 
For this, let us note that for any weight function $\chi(v)$, we have, using the H\"older inequality with $1/p+1/q=1$
$$
\int_x \left( \int_v  |Z^\beta(f)| dv \right)^p dx \le \int_x \left( \int_v \frac{1}{\chi(v)} dv \right)^{p/q} dv \int_v \chi(v)^{p/q} |Z^\beta(f))|^p dv dx.
$$ 
Thus, we need $\frac{1}{\chi(v)}$ to be integrable in $v$ and we choose $\chi(v)=(1+|v|^2)^{(\delta+n)/2}$. The lemma then follows from the bound on $E_{N,\delta}[f(t)]$ noting that with $p=1+\delta$, we have $p/q=\delta$.

\end{proof}

Applying the Gagliardo-Niremberg inequality, we have immediately
\begin{corollary} \label{cor:lqb}Let $q=\frac{n(1+\delta)}{n-(1+\delta)}$. For all $t\in [0,T]$,
$$
||\nabla Z^\alpha \phi(t) ||_{L^q(\mathbb{R}^n)} \le C_{N,n,\delta} \epsilon. $$ 
\end{corollary}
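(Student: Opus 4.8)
The plan is to combine Lemma \ref{lem:zpes}, which provides a uniform bound $\|\nabla^2 Z^\alpha \phi(t)\|_{L^{1+\delta}(\mathbb{R}^n)} \le C_{N,n,\delta}\,\epsilon$, with the Gagliardo--Nirenberg--Sobolev inequality applied to the vector field $\nabla Z^\alpha \phi(t)$. The relevant instance of Gagliardo--Nirenberg is the first-order Sobolev embedding: for $1 < p < n$ and the Sobolev conjugate exponent $p^\star = \tfrac{np}{n-p}$, one has $\|u\|_{L^{p^\star}(\mathbb{R}^n)} \lesssim \|\nabla u\|_{L^p(\mathbb{R}^n)}$ for, say, $u$ in a suitable Sobolev space. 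Taking $p = 1+\delta$ (which satisfies $1 < p < n$ since $0 < \delta < \tfrac{n-2}{n+2} < n-1$) gives $p^\star = \tfrac{n(1+\delta)}{n-(1+\delta)} = q$, so that
$$
\|\nabla Z^\alpha \phi(t)\|_{L^q(\mathbb{R}^n)} \lesssim \|\nabla(\nabla Z^\alpha \phi(t))\|_{L^{1+\delta}(\mathbb{R}^n)} = \|\nabla^2 Z^\alpha \phi(t)\|_{L^{1+\delta}(\mathbb{R}^n)} \le C_{N,n,\delta}\,\epsilon,
$$
which is exactly the claim. So the proof is essentially one line once the inputs are identified.

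First I would state the Sobolev inequality in the form used (applied componentwise to the vector-valued function $\nabla Z^\alpha \phi$), then check the arithmetic $p^\star = q$ with $p = 1+\delta$, then invoke Lemma \ref{lem:zpes} to bound the right-hand side. One should note that $\nabla Z^\alpha\phi(t)$ does lie in the space where the embedding applies: $\phi$ is recovered from $\rho(f)$ via the Newtonian potential, and under the standing regularity and decay assumptions on $f$ (hence on $\rho(f)$ and its commuted versions through Lemma \ref{lem:vac}) the function $\nabla Z^\alpha\phi$ decays at infinity and is smooth enough, so the homogeneous Sobolev inequality $\|u\|_{L^{p^\star}} \lesssim \|\nabla u\|_{L^p}$ is legitimate; alternatively one can first establish it for Schwartz functions and pass to the limit, or simply remark that the Calder\'on--Zygmund / Riesz potential representation $\nabla Z^\alpha \phi = c_n\, \nabla \Delta^{-1} Z^\alpha\rho(f)$ already exhibits $\nabla Z^\alpha\phi$ as a Riesz potential of order one of an $L^{1+\delta}$ function, so the Hardy--Littlewood--Sobolev inequality gives the $L^q$ bound directly.

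There is no real obstacle here; the only point requiring a word of care is the mild functional-analytic justification that $\nabla Z^\alpha\phi(t)$ is an admissible test function for the Gagliardo--Nirenberg (equivalently, homogeneous Sobolev) inequality, which is handled by the a priori regularity and fall-off assumed throughout Section \ref{se:pre} together with the explicit Newtonian representation of $\phi$. Everything else is the exponent bookkeeping $q = \tfrac{n(1+\delta)}{n-(1+\delta)} = (1+\delta)^\star$ and a direct application of Lemma \ref{lem:zpes}.
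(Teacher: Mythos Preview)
Your proposal is correct and follows exactly the paper's approach: the corollary is stated as an immediate consequence of Lemma~\ref{lem:zpes} via the Gagliardo--Nirenberg inequality, and your exponent computation $(1+\delta)^\star = q$ together with the bound $\|\nabla^2 Z^\alpha\phi\|_{L^{1+\delta}}\le C_{N,n,\delta}\,\epsilon$ is precisely what is intended. Your additional remarks on the functional-analytic justification are more explicit than the paper, but the argument is the same.
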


Applying $L^2$ estimates for solutions to the Poisson equation \eqref{eq:cp} and the previous pointwise estimates on $\rho\left( \left| Z^\alpha(f) \right|\right)$, we can also obtain $L^2$ decay estimates for $\nabla Z^\alpha \phi$ provided $|\alpha|$ is not too large. 
\begin{lemma} \label{lem:l2dp}For all multi-index $\alpha$ such that $|\alpha| \le N-n$
$$
||\nabla Z^\alpha \phi ||_{L^2(\mathbb{R}^n)} \lesssim  \frac{ \epsilon}{t^{(n-2)/2}}.$$
\end{lemma}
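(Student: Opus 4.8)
The plan is to solve the commuted Poisson equation \eqref{eq:cp} for $Z^\alpha\phi$ by the Newtonian potential and estimate $\nabla Z^\alpha\phi$ in $L^2$ directly from the decay of the source terms $Z^\beta\rho(f)$, $|\beta|\le|\alpha|$. First I would recall that \eqref{eq:cp} gives $\Delta Z^\alpha\phi=\sum_{|\beta|\le|\alpha|}C_\beta^\alpha Z^\beta\rho(f)$, and by Lemma \ref{lem:vac} each $Z^\beta\rho(f)$ is a finite linear combination of $\rho(Z^\gamma f)$ with $|\gamma|\le|\beta|\le|\alpha|\le N-n$; hence it suffices to bound $\|\nabla\psi\|_{L^2}$ where $\Delta\psi=h$ and $h$ is one of the $\rho(Z^\gamma f)$. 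For such $\psi$ one has the representation $\nabla\psi(x)=c_n\int_{\mathbb{R}^n}\frac{x-y}{|x-y|^n}h(y)\,dy$, so $\|\nabla\psi\|_{L^2}$ is controlled by a Riesz-potential (Hardy--Littlewood--Sobolev) estimate: $\|\nabla\psi\|_{L^2(\mathbb{R}^n)}\lesssim \|h\|_{L^{r}(\mathbb{R}^n)}$ with $\tfrac1r=\tfrac12+\tfrac1n$, i.e. $r=\tfrac{2n}{n+2}$. (Alternatively one can avoid HLS entirely: since $\Delta\psi=h$, integration by parts gives $\|\nabla\psi\|_{L^2}^2=-\int \psi\,h$, and then one interpolates, but the Riesz route is cleanest.)

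The remaining task is therefore to prove $\|\rho(|Z^\gamma f|)(t)\|_{L^{2n/(n+2)}(\mathbb{R}^n_x)}\lesssim \epsilon\, t^{-(n-2)/2}$ for $|\gamma|\le N-n$. Here I would combine the two pieces of information already available under the bootstrap assumption: the $L^1_x$ bound $\|\rho(|Z^\gamma f|)(t)\|_{L^1_x}\le \|Z^\gamma f(t)\|_{L^1_{x,v}}\lesssim\epsilon$ (from $E_{N,\delta}[f(t)]\le 2\epsilon$), and the pointwise decay \eqref{eq:perz}, $\rho(|Z^\gamma f|)(t,x)\lesssim \epsilon(1+t+|x|)^{-n}$, which in particular gives $\|\rho(|Z^\gamma f|)(t)\|_{L^\infty_x}\lesssim \epsilon\,(1+t)^{-n}$. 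Interpolating $L^1$ and $L^\infty$: for $r=\tfrac{2n}{n+2}$ write $\tfrac1r=\tfrac{\theta}{1}+\tfrac{1-\theta}{\infty}$, so $\theta=\tfrac1r=\tfrac{n+2}{2n}$, and
\[
\|\rho(|Z^\gamma f|)(t)\|_{L^{r}_x}\le \|\rho(|Z^\gamma f|)(t)\|_{L^1_x}^{\theta}\,\|\rho(|Z^\gamma f|)(t)\|_{L^\infty_x}^{1-\theta}\lesssim \epsilon^{\theta}\cdot\big(\epsilon\,(1+t)^{-n}\big)^{1-\theta}.
\]
Then $1-\theta=\tfrac{n-2}{2n}$, so the power of $(1+t)$ is $-n(1-\theta)=-\tfrac{n-2}{2}$, giving exactly $\|\rho(|Z^\gamma f|)(t)\|_{L^{r}_x}\lesssim \epsilon\,(1+t)^{-(n-2)/2}$, and $(1+t)^{-(n-2)/2}\le t^{-(n-2)/2}$ for $t\ge 0$ (with the usual convention that the bound is read for $t>0$, the statement at $t=0$ following from the $L^1$--$L^\infty$ bound directly). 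Summing over $|\beta|\le|\alpha|$ and $|\gamma|\le|\beta|$ and feeding this into the Riesz estimate yields the claim with a constant depending only on $N,n,\delta$.

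The only mild subtlety — and what I would flag as the main point requiring care rather than a genuine obstacle — is the interplay between the regularity threshold and the decay input: \eqref{eq:perz} is only available for $|\alpha|\le N-n$, which is precisely the hypothesis imposed in the lemma, so the chain "$|\beta|\le|\alpha|\le N-n$ and $|\gamma|\le|\beta|$" stays inside the range where pointwise decay has already been established, and Lemma \ref{lem:vac} only produces lower-order terms, so nothing escapes. One should also make sure the Newtonian representation of $\nabla Z^\alpha\phi$ is the correct solution (decaying at infinity), which is guaranteed by the assumed fall-off of $\phi$; and that the HLS estimate applies, which needs $1<r<\tfrac{n}{1}$ and the exponent relation $\tfrac12=\tfrac1r-\tfrac1n$ — both hold for $r=\tfrac{2n}{n+2}$ when $n\ge3$. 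Apart from these bookkeeping checks the argument is routine interpolation plus a standard elliptic estimate.
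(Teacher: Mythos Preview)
Your proof is correct and follows essentially the same route as the paper: both reduce the elliptic estimate to controlling $\|\rho(Z^\gamma f)\|_{L^{2n/(n+2)}}$ for $|\gamma|\le N-n$ and then extract $t^{-(n-2)/2}$ decay from the pointwise bound \eqref{eq:perz}. The paper uses the energy identity $\|\nabla\psi\|_{L^2}^2=-\int\psi\,h$ together with the Sobolev embedding $\|\psi\|_{L^{2n/(n-2)}}\lesssim\|\nabla\psi\|_{L^2}$ (the alternative you mention) and then integrates the pointwise decay directly in $x$, whereas you use the HLS/Riesz-potential estimate and interpolate between the $L^1$ energy bound and the $L^\infty$ pointwise bound; both computations are standard and give the same rate.
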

\begin{proof}
Multiply the Poisson equation satisfied by $Z^\alpha \phi$ by $Z^\alpha \phi$ and integrate by parts to obtain
\begin{eqnarray*}
||\nabla Z^\alpha( \phi) ||_{L^2(\mathbb{R}^n)}^2&=&-\int_{x \in \mathbb{R}^n}Z^\alpha( \phi)Z^\alpha( \rho(f)) dx \\
&\le&||Z^\alpha( \phi)||_{L^{\frac{2n}{n-2}}(\mathbb{R}^n)}||Z^\alpha( \rho(f))||_{L^{\frac{2n}{n+2}}(\mathbb{R}^n)}.
\end{eqnarray*}
Using the Gagliardo-Nirenberg inequality $||\psi||_{L^{\frac{2n}{n-2}}(\mathbb{R}^n)} \lesssim || \nabla \psi ||_{L^2(\mathbb{R}^n)}$ and Lemma \ref{lem:vac}, we obtain
\begin{eqnarray}
||\nabla Z^\alpha( \phi) ||_{L^2(\mathbb{R}^n ) } &\lesssim&  ||Z^\alpha( \rho(f))||_{L^\frac{2n}{n+2}(\mathbb{R}^n)} \nonumber\\
&\lesssim&  \sum_{|\beta| \le |\alpha|} ||\rho(Z^\beta(f))||_{L^\frac{2n}{n+2}(\mathbb{R}^n)} \label{es:gpl2d}
\end{eqnarray}
Since 
\begin{eqnarray*}
\int_{x \in \mathbb{R}^n } \frac{dx}{\left(1+|x|+t \right)^{\frac{2n^2}{n+2}}} \lesssim t^{-\left(\frac{2n^2}{n+2}-n\right)} \int_{x \in \mathbb{R}^n } \frac{d(x/t)}{\left(|x/t|+1 \right)^{\frac{2n^2}{n+2}}} \lesssim  t^{-\left(\frac{2n^2}{n+2}-n\right)},
\end{eqnarray*}
the lemma now follows by estimating the right-hand side of \eqref{es:gpl2d} using the pointwise estimates \eqref{eq:perz} on $\rho(Z^\beta(f))$.

\end{proof}

The previous lemma combined with the $L^2$ based Klainerman-Sobolev inequality of Lemma \ref{lem:ksmp} gives the pointwise estimates on $\nabla Z^\alpha( \phi)$ claimed in the theorem.
\begin{corollary}\label{lem:dgpp}
For any multi-index $|\alpha| \le N-(3n/2+1)$ and $Z^\alpha \in \Gamma_s^{|\alpha|}$, we have for all $t \in [0,T],$
$$
\left| \nabla Z^\alpha \phi \right| \lesssim \frac{ \epsilon}{(1+|x|+t)^{n/2}t^{(n-2)/2}}.
$$
\end{corollary}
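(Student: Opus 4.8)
The plan is to combine the $L^2$-based Klainerman--Sobolev inequality of Lemma \ref{lem:ksmp} with the $L^2$ decay of $\nabla Z^\alpha\phi$ furnished by Lemma \ref{lem:l2dp}. Fix a multi-index $\alpha$ with $|\alpha|\le N-(3n/2+1)$ and $Z^\alpha\in\Gamma_s^{|\alpha|}$, fix $1\le k\le n$, and apply Lemma \ref{lem:ksmp} to the scalar function $\psi:=\partial_{x^k}Z^\alpha\phi$ on $\mathbb{R}^n_x$ (treating $t\in[0,T]$ as a parameter). This yields
$$
|\partial_{x^k}Z^\alpha\phi|(x)\lesssim \frac{1}{(1+t+|x|)^{n/2}}\sum_{|\beta|\le (n+2)/2,\ Z^\beta\in\Gamma_s}||\, Z^\beta\partial_{x^k}Z^\alpha\phi\,||_{L^2(\mathbb{R}^n)}.
$$

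Next I would commute the outer operator $Z^\beta$ past the single translation $\partial_{x^k}$. For every $Z\in\Gamma_s$ the commutator $[\partial_{x^k},Z]$ is either $0$ (for the translations $\partial_{x^i}$ and the uniform motions $t\partial_{x^i}$) or again a single translation $\partial_{x^j}$ (it equals $\pm\partial_{x^j}$ for the rotations $\Omega^x_{ij}$ and $\partial_{x^k}$ for the scaling $S^x$). Iterating, $Z^\beta\partial_{x^k}Z^\alpha\phi$ is a finite linear combination with bounded constant coefficients of terms $\partial_{x^j}Z^{\beta'}Z^\alpha\phi$ with $|\beta'|\le|\beta|$ and $Z^{\beta'}\in\Gamma_s^{|\beta'|}$. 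Since $Z^{\beta'}Z^\alpha$ is itself a composition of $|\beta'|+|\alpha|$ vector fields in $\Gamma_s$, each such term has the form $\partial_{x^j}Z^{\alpha''}\phi$ with $Z^{\alpha''}\in\Gamma_s^{|\alpha''|}$ and
$$
|\alpha''|\le|\alpha|+|\beta|\le|\alpha|+\tfrac{n+2}{2}\le N-\tfrac{3n}{2}-1+\tfrac{n+2}{2}=N-n.
$$

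Because $|\alpha''|\le N-n$, Lemma \ref{lem:l2dp} applies to each of these terms and gives $||\,\partial_{x^j}Z^{\alpha''}\phi\,||_{L^2}\le ||\,\nabla Z^{\alpha''}\phi\,||_{L^2}\lesssim \epsilon\, t^{-(n-2)/2}$. Summing the finitely many (in an $N,n$-dependent way) resulting terms and inserting into the Klainerman--Sobolev bound produces
$$
|\partial_{x^k}Z^\alpha\phi|(x)\lesssim \frac{1}{(1+t+|x|)^{n/2}}\cdot\frac{\epsilon}{t^{(n-2)/2}},
$$
and taking the maximum over $k=1,\dots,n$ yields the asserted estimate for $|\nabla Z^\alpha\phi|$.

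The argument is essentially bookkeeping, and I expect no genuine analytic difficulty beyond what is already packaged in Lemmas \ref{lem:ksmp} and \ref{lem:l2dp}. The one point requiring care --- and precisely where the threshold $|\alpha|\le N-(3n/2+1)$ is forced --- is checking that commuting the at most $(n+2)/2$ Klainerman--Sobolev vector fields past $\nabla$ never raises the total number of $\Gamma_s$-vector fields hitting $\phi$ above $N-n$, so that Lemma \ref{lem:l2dp} is legitimately applicable to every term produced; the loss of $3n/2+1$ derivatives is exactly $n$ (from Lemma \ref{lem:l2dp}) plus $(n+2)/2$ rounded appropriately (from Lemma \ref{lem:ksmp}).
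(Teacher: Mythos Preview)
Your proof is correct and follows exactly the approach the paper intends: the paper's own proof is the one-line remark that Corollary \ref{lem:dgpp} follows by combining the $L^2$ decay of Lemma \ref{lem:l2dp} with the $L^2$ Klainerman--Sobolev inequality of Lemma \ref{lem:ksmp}. You have supplied the details the paper omits --- applying Lemma \ref{lem:ksmp} to $\partial_{x^k}Z^\alpha\phi$, commuting the extra $\Gamma_s$-vector fields past $\partial_{x^k}$, and tracking the derivative count $|\alpha|+(n+2)/2\le N-n$ --- and all of this is correct.
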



\subsection{Improving the global bounds} \label{se:igb}
\subsubsection{$L^1$ estimates of $Z^\alpha f$} \label{se:il1zaf}
We first consider the $L^1$ estimate on $Z^\alpha(f)$.
From Lemma \ref{lem:cl}, we have for all multi-index $|\alpha| \le N$, and all $t\in [0,T]$,

$$
|| Z^\alpha f(t) ||_{L^1_{x,v}} \le|| Z^\alpha f(0) ||_{L^1_{x,v}} + \int_0^t || T_\phi (Z^\alpha(f) )||_{L^1_{x,v}}.
$$
Thus, we only need to prove that the term below the integral is integrable in $t$.

Now, from the commutation formula of Lemma \ref{lem:comf}, we know that 
$$
|| T_\phi (Z^\alpha(f) )||_{L^1_{x,v}} \le C_N \sum_{|\beta| \le |\alpha|-1,} \sum_{|\gamma|+|\beta| \le |\alpha|}| C_{\beta \gamma}^\alpha| ||\nabla_x Z^\gamma \phi \nabla_v Z^\beta f ||_{L^1_{x,v}} . $$
Note that we have so far no estimates on $v$ derivatives of $Z^\beta f$. To circumvent this difficulty, let us rewrite any $v$ derivative as
$$
\partial_{v^i} Z^\beta(f)= (t \partial_{x^i} + \partial_{v^i} ) Z^\beta(f) -t \partial_{x^i} Z^\beta(f).
$$
Since both $t \partial_{x^i} + \partial_{v^i}$ and $\partial_{x^i}$ are part of our algebra of commuting vector fields, we have 
\begin{equation} \label{eq:tlos}
|| T_\phi (Z^\alpha(f) )||_{L^1_{x,v}} \le C_N (1+t ) \sum_{\substack{ |\beta| \le |\alpha|,|\gamma| \le |\alpha| \\ |\gamma| +|\beta|\le |\alpha|+1}} | C_{\beta \gamma}^\alpha| ||\nabla_x Z^\gamma(\phi)  Z^\beta (f) ||_{L^1_{x,v}} . 
\end{equation}

Now, since $N \ge 5n/2+2$ and $|\gamma| +|\beta|\le |\alpha|+1 \le N+1$, we always have at least either $|\gamma| \le N-(3n/2+1)$ or $|\beta| \le N -n$. 

Case $1$: $|\gamma| \le N-(3n/2+1)$\\
In that case, we have access, thanks to Corollary \ref{lem:dgpp} to the pointwise estimates, for all $t \in [0,T]$,
$$
| \nabla Z^\beta \phi(t,x) | \le \frac{C_N \epsilon}{(1+t)^{n-1}}.
$$
Thus, we get the estimates
\begin{equation} \label{es:dn4}
  ||\nabla_x Z^\gamma(\phi)  Z^\beta (f) ||_{L^1_{x,v}}\le \frac{C_N \epsilon }{(1+t)^{n-2}} \sum_{|\beta| \le |\alpha|} ||Z^\beta f ||_{L^1_{x,v}} ,
\end{equation}
and we see that if $n \ge 4$, then the error is integrable.

Case $2$: $|\beta| \le N-n$\\
In that case, we estimate the error term as follows. First, 
\begin{eqnarray*}
||\nabla_x Z^\gamma(\phi)  Z^\beta (f) ||_{L^1_{x,v}}&=&\int_x \int_v |\nabla_x Z^\gamma(\phi)  | | Z^\beta (f)| dx dv \\
&=&\int_x |\nabla_x Z^\gamma(\phi)  | \rho\left( | Z^\beta (f)| \right)dx,
\end{eqnarray*}
since $Z^\gamma(\phi)$ is independent of $v$.
Now applying the H\"older inequality with $1/p+1/q=1$, we obtain
\begin{eqnarray*}
||\nabla_x Z^\gamma(\phi)  Z^\beta (f) ||_{L^1_{x,v}}&\le& || \nabla_x Z^\gamma(\phi) ||_{L^q_x}  \left|\left| \rho\left( | Z^\beta (f)| \right)\right|\right|_{L^p_x}
\end{eqnarray*}
We would like to take $q=2$, since the $L^2$ bounds are so easy to obtain for solutions to the Poisson equation. Using the pointwise bounds to estimate $|| \rho\left( | Z^\beta (f)| \right)||_{L^2_x}$, 
we would obtain 
$$
\left|\left| \rho\left( | Z^\beta (f)| \right)\right|\right|_{L^2_x} \le C_N \epsilon t^{-n/2}.
$$
On the other hand, we would get no decay a priori on $||\nabla_x Z^\gamma(\phi)||_{L^2_x}$, since $|\gamma|$ is a priori too large to have access to decay estimates for the source term of the Poisson equation satisfied by $Z^\gamma(\phi)$. 
With the extra weight of $t$ in \eqref{eq:tlos}, we see that if $n=4$, we only get $1/t$ decay and we would get a logarithmic loss. 

To avoid this problem, we want to take  $q < 2$. Recalling the $L^q$ bounds of Corollary \ref{cor:lqb}, we see that since $\delta < \frac{n-2}{n+2}$, we have $q=\frac{n(1+\delta)}{n-(1+\delta)} < 2$ and thus, 
$$
|| \rho\left( | Z^\beta (f)| \right)||_{L^p_x} \le\frac{C_N \epsilon}{(1+ t)^{n/2-1+\sigma}},
$$
for some $\sigma > 0$, which is now integrable in $t$. Putting everything together, we have obtain
$$
|| T_\phi (Z^\alpha(f) )||_{L^1_{x,v}} \le\frac{C_N \epsilon}{(1+ t)^{n-2}} E_{N,\delta}[f(t)]+\frac{C_N \epsilon^2}{(1+ t)^{n/2-1+\sigma}} \lesssim \frac{C_N \epsilon^2}{(1+ t)^{n/2-1+\sigma}},
$$
using the bootstrap assumptions to bound $E_{N,\delta}[f(t)]$. 
\subsubsection{$v$ weighted $L^{p}$ estimates of $Z^\alpha f$}
Let $p=1+\delta$ and $q=\delta(\delta+n)$. Recall from \eqref{eq:Ewv} the inequality,
\begin{eqnarray*}
\left|\left|(1+v^2)^{q/2} |Z^\alpha f(t)|^p \right|\right|_{L^1( \mathbb{R}^n_x \times \mathbb{R}^n_v)}&\lesssim& \left|\left| (1+v^2)^{q/2} |Z^\alpha f(0)|^p \right|\right|_{L^1( \mathbb{R}^n_x \times \mathbb{R}^n_v)} \\
&&\hbox{}+\int_0^t \left|\left| (1+v^2)^{q/2}  |Z^\alpha f |^{p-1} T_\phi(Z^\alpha f) \right|\right|_{L^1( \mathbb{R}^n_x \times \mathbb{R}^n_v )}ds \\
&&\hbox{}+\int_0^t \left|\left|\,\, | Z^\alpha  f |^{p}  (1+v^2)^{(q-1)/2} \partial \phi \right|\right|_{L^1( \mathbb{R}^n_x \times \mathbb{R}^n_v )}ds, \\
&\lesssim& \left|\left| (1+v^2)^{q/2} |Z^\alpha f(0)|^p \right|\right|_{L^1( \mathbb{R}^n_x \times \mathbb{R}^n_v)}+I_1+I_2,
\end{eqnarray*}
where $$I_1=\int_0^t \left|\left| (1+v^2)^{q/2}  |Z^\alpha f |^{p-1} T_\phi(Z^\alpha f) \right|\right|_{L^1( \mathbb{R}^n_x \times \mathbb{R}^n_v )}$$
and
$$
I_2=\int_0^t \left|\left|\, \,| Z^\alpha  f |^{p}  (1+v^2)^{(q-1)/2} \partial \phi \right|\right|_{L^1( \mathbb{R}^n_x \times \mathbb{R}^n_v )}.
$$

To estimate the error term $I_1$, we proceed as in the previous section, replacing $T_\phi(Z^\alpha f)$ using the commutation formula of Lemma \ref{lem:comf} and rewriting the terms of the form $\partial_v^i Z^\beta f$ as $\left( t\partial_{x^i}+ \partial_v \right) Z^\beta(f)- t \partial_{x^i} Z^\beta f$. We are then left with error terms of the form
\begin{equation}\label{err:i1t}
(1+t)\int_{x} \int_{v} |\partial_x Z^\gamma \phi| |Z^\beta f|(1+v^2)^{q/2} |Z^\alpha f|^{p-1} dxdv,
\end{equation}
which need to be integrable in $t$. Using Young inequality, we have that 
$$
\int_v |Z^\beta f|(1+v^2)^{q/2} |Z^\alpha f|^{p-1} dv \lesssim \int_v |Z^\beta f|^p(1+v^2)^{q/2} dv + \int_v |Z^\alpha f|^p(1+v^2)^{q/2} dv.$$
Note moreover that, as in the previous section, if $|\alpha| \le N-n$, we have access to the pointwise estimates \eqref{eq:perz2}. With this in mind, all the error terms coming from $I_1$ can then be estimated as in the previous section. 


The estimates on the error term $I_2$ are easier and rely on the pointwise estimates of $\partial \phi$ of Lemma \ref{lem:dgpp}.

\subsection{Conclusions of the proof of the $n \ge 4$ case}
Thus, we have obtained
$$
E_{N,\delta}(t) \le E_{N,\delta}(0)+ C_N \int_0^t \frac{\epsilon^2}{(1+ s)^{n/2+\sigma}}  ds,
$$
for some $\sigma > 0$. 
Using the smallness assumption $E_{N,\delta}(0) \le \epsilon$, it follows that $E_{N,\delta}(t) \le \epsilon + C \epsilon^2 \le \frac{3}{2} \epsilon$, provided $\epsilon$ is sufficiently small, which improves our original bootstrap assumption \eqref{eq:bass} and concludes the proof.

\section{The three dimensional case}
\subsection{Strategy of the proof}
Repeating the previous argument in the $n=3$ case would fail as seen from inequality \eqref{es:dn4}. Moreover, the non-linear terms do not seem to possess any special structure that would allow better decay such as the null condition for non-linear waves in $3d$, and it does not seem possible to close the estimates allowing the norms to grow slowly, using some hierarchy in the equations, as it sometimes happen for some system of non-linear evolution equations (for instance in \cite{lr:gsmhg}). Thus, it seems that one is forced to try to improve the commutation relations so as to remove the most problematic error terms. This will be done using modified vector fields. In hindsight, this strategy is reminiscent of the strategy of \cite{ck:gnsms} where modified vector fields are constructed by solving transport equations along null cones. 

To understand and motivate the definitions of the modified vector fields that we will use here, let us consider a solution $f$ to the transport equation $T_\phi(f)=0$ and commute this equation with $Z_i=t \partial_{x^i} + \partial_{v^{i}}$. We obtain

\begin{eqnarray*}
T_\phi ( Z_i(f))&=& T( Z_i(f)) + \mu \nabla_x\phi. \nabla_v Z_i(f) \\
&=& Z_i(T(f)) + \mu Z_i (\nabla_x \phi.\nabla_v f) -  \mu \nabla_x (Z_i \phi ). \nabla_v f \\
&=&Z_i ( T_\phi (f)) -\mu\sum_{j=1}^n \partial_{x^j} Z_i( \phi). ( t\partial_{x^j}+\partial_{v^j} )f + \mu \sum_{j=1}^n \partial_{x^j} Z_i( \phi). t \partial_{x^j} f
\end{eqnarray*}
Thus, the error terms on the right-hand side are of two forms. 
The good terms are of the form $\partial_x Z_i(\phi) Z'f$, where the $Z'$ are some of the commuting vector fields.  These have enough decay so that they can be estimated as before. The bad terms are of the form $t\partial_x Z_i (\phi) Z'f$ where, in view of the extra $t$ factor, the previous arguments would lead to logarithmic growth. 


The aim of the modified vector fields will be to avoid the introduction of such bad terms. Note that on the other hand, commutations with vector fields such as $\partial_t$ or $\partial_{x^i}$ would be better, because $\partial \partial \phi$ enjoys improved decay. As a consequence, we will only need to modify the homogeneous vector fields. 

Let us consider, for all $1 \le i \le n$, vector fields of the form
$$Y_i=t \partial_{x^i} + \partial_{v^i} - \sum_{j=1}^n \Phi^{j}_i(t,x,v) \partial_{x^j},$$
where the coefficients $\Phi^{j}_i(t,x,v)$ are sufficiently regular functions to be specified below. 


Since $\partial_{x^j}$ commute with the free transport operator, we have the commutation formula

\begin{eqnarray}
[T_\phi, Y_i](f)&=& -\mu\sum_{j=1}^n \partial_{x^j} Z_i(\phi) Z_j(f)
 +\mu \sum_{j=1}^n \Phi^{j}_i \partial_{x^j}(\nabla_x\phi).\nabla_{v}f \label{eq:mcf}\\
&&\hbox{}-\sum_{j=1}^n T_\phi ( \Phi^{j}_i) \partial_{x^j}f
+ \mu\sum_{j=1}^n \partial_{x^j} Z_i ( \phi) t \partial_{x^j} f \nonumber
\end{eqnarray}
where $Z_j = t \partial_{x^j} + \partial_{v^j}$.
The first term on the right-hand can be handled as before as it does not have $v$ derivatives leading to the extra power of $t$. 
Note moreover that since $t \partial_{x^j }$ is part of the algebra of macroscopic commuting vector field, the second term can be rewritten as 
$$
\sum_{j=1}^n \Phi^{j}_i t^{-1} \nabla_x Z_j \phi \nabla_v f
$$
and thus is expected to be integrable by the previous arguments provided $\Phi^{j}_i$ are uniformly bounded by any power of $t$ strictly less $1$.


The key idea is then to choose appropriately the $\Phi^{j}_i$ so as to be able to cancel the last two terms in \eqref{eq:mcf}. For this, we impose that each  $\Phi^{j}_i$ is obtained as the unique solution to the inhomogeneous transport equation
$$
T_\phi (\Phi^{j}_i )= \mu t \partial_{x^j} Z_i(\phi) 
$$
with $0$ initial data. Note that the right-hand side of this equation only decay like $1/t$, so we expect $\Phi^{j}_i $ to actually grow logarithmically in $t$. 

Assuming that this holds, we see that all the error terms in \eqref{eq:mcf} are now integrable and thus, we should expect to control our norms $E_{N,\delta}[f(t)]$ provided all the original commutation vector fields $Z$ are replaced by their modified form $Y$, hence the definition of the norm in the $3$d case given by \eqref{eq:tn}. A few difficulties remain
\begin{enumerate}
\item Since the $\Phi^j_i$ (and in fact all the coefficients involved in the contruction of the modified vector fields) are growing logarithmically, it is apriori not clear how to exploit the new energies obtained after commutation with modified vector fields to obtain pointwise decay. The idea is again to rewrite the extra terms such as $\Phi^{j}_i \partial_{x^j}(f)$ in the form
$$
 \Phi^{j}_i \partial_{x^j}(f)= \frac{\Phi^j_i}{t} \left( t \partial_{x^j} + \partial_{v^j} \right)(f)-\frac{\Phi^j_i}{t} \partial_{v^j}(f)
$$
and to use an integration by parts in $v$ for the last term to push the $v$ derivatives on the $\Phi^j_i$ coefficient. See Section \ref{se:ksimvf} below. 
\item The Poisson equation $\Delta \phi=\rho(f)$ cannot be commuted with modified vector fields, since the coefficients in the modified vector fields depend on $v$, while $\phi$ and $\rho(f)$ are macroscopic quantities depending only on $(t,x)$. Thus, we keep commuting this equation with non-modified vector fields. Quantities such as $Z^\alpha \rho(f)$ are then rewritten as $\rho( Y^\alpha(f))$ plus error terms. The structure of these error terms is the subject of Lemma \ref{lem:pcm}. 
\item As is seen from the statement of Lemma \ref{lem:pcm}, some of the error terms will depend on $Y^\alpha(\varphi)$, where $\varphi$ is a coefficient obtained by solving a transport equation of the form $T_\phi(\varphi)= t \partial_x Z \phi$ and $Y^\alpha$ is a composition of $|\alpha|$ modified vector fields. Commuting the transport equation satisfied by $\varphi$ by $|\alpha|$ vector fields, we see that we need to control $t Y^\alpha \partial_x Z (\phi)$. This poses a problem at the top order, when $|\alpha|=N$, since it looks as if one needs to control $\partial Z^\alpha Z(\phi)$, which would a priori require to commute the Poisson equation by $N+1$ vector fields, and therefore would forbid us to close the estimates. On the other hand, using directly the Poisson equation satisfied by $Z^\alpha(\phi)$, we may hope to control $\partial^2 Z^\alpha (\phi)$. To exploit this fact and close the top order estimates, we devote some time to describe the structure of the top order terms of Lemma \ref{lem:pcm} in Lemma \ref{lem:pcmto}.
\item Finally, numerous terms of the form $Y^\alpha(\varphi) Y^\beta(f)$, where $\varphi$ is a coefficient obtained by solving a transport equation as above, will appear in the equations. This creates another difficulty to close the top order estimates, since we do not have access to pointwise estimates on $Y^\alpha(\varphi)$ when $|\alpha|$ is large, nor do we have access to $L^p_{x,v}$ estimates on $Y^\alpha(\varphi)$ because the source terms in their transport equations are not integrable in $v$. We will circumvent this difficulty by considering directly a transport equation satisfied by the product  $Y^\alpha(\varphi) Y^\beta(f)$. See Section \ref{se:ep} below. 
\end{enumerate}
We now turn to the details of the proof. 
\subsection{Definitions and properties of the modified vector fields algebra} \label{se:dmvfa}
As explained above, our new algebra of microscopic vector fields will consist in 
\begin{enumerate}
\item Standard translations $\partial_t$, $\partial_{x^i}$,
\item Modified uniform motions $Y_i:=t \partial_{x^i}+ \partial_{v^i}-\sum_{k=1}^n \Phi^{k}_i \partial_{x^k}$,
\item Modified rotations $\Omega_{i,j}^{x}+\Omega_{i,j}^{v}-\sum_{k=1}^n \omega^{k}_{i,j} \partial_{x^k}$,
\item Modified scaling in space $S^{x}+S^{v}-\sum_{k=1}^n \sigma^{k} \partial_{x^k}$,
\item Modified scaling in space and time $t\partial_t + \sum_{i=1}^n x^i \partial_{x^i} - \sum_{k=1}^n \theta^{k} \partial_{x^k}$,
\end{enumerate}
where the coefficients $\Phi^{k}_i$, $\omega^{k}_{i,j}$, $\sigma^{k}$, $\theta^{k}$ are solutions of the following inhomogeneous equations with $0$ data at $t=0$
\begin{eqnarray*}
T_\phi( \Phi^{k}_i )&=& \mu t \partial_{x^k}\left[ Z_i(\phi) \right], \\
T_\phi( \omega^{k}_{i,j})&=&\mu t \partial_{x^k} \left[\Omega_{i,j}^{x} ( \phi)\right], \\
T_\phi( \sigma^{k} )&=&\mu t \partial_{x^k} \left[S^{x} (\phi)-2\phi\right], \\
T_\phi(  \theta^{k}) &=& \mu t \partial_{x^k}\left[ \left( t\partial_t + \sum_{i=1}^n x^i \partial_{x^i} \right)( \phi)\right].
\end{eqnarray*}
\subsubsection{Further notations}
Since the sign of $\mu$ will play no role in the analysis to come, we will asume without loss of generality that $\mu=1$ in the rest of this article, to symplify the notation. 

We will denote by $\mathcal{M}$ the set of all the coefficients $\Phi^{k}_i$, $\omega^{k}_{i,j}$, $\sigma^{k}$, $\theta^{k}$ and by $\varphi$ a generic coefficient among them. Similarly to the set $\gamma$ and $\gamma_s$, we define the sets $\gamma_{m}$ and $\gamma_{m,s}$ where $\gamma_{m}$ is the set of all modified vector fields (including the translations) and $\gamma_{m,s}$ is the set of all modified vector fields minus the time translation and the space-time scaling.

If $Z$ is an original, non-modified vector field, we will sometimes write schematically $Y=Z+\varphi \partial_x$ to denote the associated modified vector field, where by convention $\varphi \partial_x=0$ is $Z$ is any of the translations, $\varphi \partial_x=\displaystyle{\sum_{k=1}^n \Phi^{k}_i \partial_{x^k}}$ if $Z$ is one of the uniform motions and similarly for the other vector fields.

Finally, we will use the notation $P(\bar{\varphi})$ to denote a function depending on all the coefficients in $\gamma_m$ or $\gamma_{m,s}$.

\subsubsection{Improved commutation formulae} \label{se:icf}
With these definitions, we now have the following improved commutation formula\footnote{Once again, in all the formulae of this section, we assume that $\phi$ is a sufficiently regular function of $(t,x)$ defined on $[0,T] \times \mathbb{R}^3$ and we will eventually control the regularity of $\phi$ through a bootstrap argument.}.

\begin{lemma}\label{lem:icf}
For any $Y \in \gamma_{m}$ and any sufficiently regular function $g$ of $(t,x,v)$, $[T_\phi,Y](g)$ can be written as a linear combination with constant coefficients of terms of the form $\partial_{x} Z(\phi) Y(g)$, $\varphi \partial_{x} Z(\phi) Y(g)$ or $\varphi^2 \partial_{x} Z(\phi) Y(g)$ where $\varphi \in \mathcal{M}$, $\varphi^2$ denotes a generic product of two coefficients in $\mathcal{M}$, $Z \in \Gamma$ and $Y \in \gamma_{m}$.
\end{lemma}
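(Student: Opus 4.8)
The plan is to compute $[T_\phi, Y](g)$ directly for each of the five types of vector field in $\gamma_m$, using the defining transport equations for the coefficients $\varphi \in \mathcal{M}$ to cancel the dangerous terms. First I would dispose of the translations $\partial_t, \partial_{x^i}$: these have $\varphi\partial_x = 0$ by convention, and since $\partial_t$ and $\partial_{x^i}$ commute with $T$ exactly, we get $[T_\phi,\partial_{x^i}](g) = -\partial_{x^i}(\nabla_x\phi)\cdot\nabla_v g = -\sum_j \partial_{x^j}\partial_{x^i}\phi\,\partial_{v^j} g$, which is already of the form $\partial_x Z(\phi)\, Y(g)$ with $Z = \partial_{x^j}$ and $Y = \partial_{v^j}$ — except that $\partial_{v^j}$ is not in $\gamma_m$. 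Here the key algebraic move, already used repeatedly in the excerpt, is to write $\partial_{v^i} = Y_i - t\partial_{x^i} + \sum_k \Phi^k_i \partial_{x^k}$, i.e. $\partial_{v^j} g = Y_j(g) - t\,\partial_{x^j}g + \sum_k \Phi^k_j\,\partial_{x^k}g$; the $t\,\partial_{x^j}$ piece is absorbed because $t\partial_{x^j}\phi$-type factors arise from $Z$ in $\Gamma$ (recall $t\partial_{x^j}\in\Gamma$), so $\partial_x Z(\phi)\cdot t\partial_{x^j}g = \partial_x(t\partial_{x^j})Z(\phi)\cdot\partial_{x^j}g$ up to the algebra of $\Gamma$, and the $\Phi^k_j\partial_{x^k}g$ piece produces a term with one coefficient factor $\varphi$. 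This is the mechanism by which $\varphi$ and $\varphi^2$ factors appear.

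Next, the main case: a modified homogeneous vector field $Y = Z + \sum_k \varphi^k\partial_{x^k}$ where $Z$ is a uniform motion, rotation, or one of the two scalings and $T_\phi(\varphi^k) = t\,\partial_{x^k}[\,\widetilde{Z}(\phi)\,]$ for the appropriate macroscopic operator $\widetilde Z$ (this is $Z_i(\phi)$, $\Omega^x_{ij}(\phi)$, $S^x(\phi)-2\phi$, or $(t\partial_t + S^x)(\phi)$, the correction $-2\phi$ being exactly $\tfrac12[\Delta,S^x]$-bookkeeping). I would expand
$$[T_\phi,Y](g) = [T_\phi,Z](g) + \sum_k [T_\phi,\varphi^k\partial_{x^k}](g).$$
For the first bracket, the computation in the "Strategy" subsection shows $[T_\phi,Z](g)$ consists of a "good" term $-\sum_j \partial_{x^j}\widetilde{Z}(\phi)\,Z_j(g)$-type expression (no $t$-weighted $v$-derivative) plus a "bad" term $+\sum_j t\,\partial_{x^j}\widetilde{Z}(\phi)\,\partial_{x^j}g$. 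For the second bracket, since $\partial_{x^k}$ commutes with $T_\phi$ up to the $\nabla\phi\cdot\nabla_v$ piece,
$$[T_\phi,\varphi^k\partial_{x^k}](g) = T_\phi(\varphi^k)\,\partial_{x^k}g + \varphi^k[T_\phi,\partial_{x^k}](g) = t\,\partial_{x^k}\widetilde Z(\phi)\,\partial_{x^k}g - \varphi^k\,\partial_{x^k}(\nabla_x\phi)\cdot\nabla_v g.$$
The first piece here is exactly $-$(the bad term from $[T_\phi,Z]$), so they cancel by the choice of $\varphi^k$ — this is the whole point of the construction. What survives is: the good term from $[T_\phi,Z]$, and the term $-\varphi^k\,\partial_{x^k}\partial_{x^j}\phi\,\partial_{v^j}g$. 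Both are then massaged into the claimed normal form by the substitution $\partial_{v^j}g = Y_j g - t\partial_{x^j}g + \sum_\ell \Phi^\ell_j\partial_{x^\ell}g$ and by recognizing $\partial_{x^k}\partial_{x^j}\phi = \partial_x Z(\phi)$ with $Z\in\Gamma$: the good term gives $\partial_x Z(\phi)\,Y(g)$ contributions, the $\varphi^k\cdot(\ldots\partial_{v^j}g)$ term gives $\varphi\,\partial_x Z(\phi)\,Y(g)$ and, after the $\Phi^\ell_j$ substitution, $\varphi^2\,\partial_x Z(\phi)\,Y(g)$ contributions.

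The main obstacle — and the part requiring genuine care rather than bookkeeping — is keeping the \emph{order} of the factors $Z\in\Gamma$ and $Y\in\gamma_m$ under control: every time one rewrites a $\partial_v$ or $\partial_x$ as (modified vector field) $-$ (lower order), one must check that no \emph{derivative} of $\varphi$ is generated (only undifferentiated $\varphi$ factors are allowed in the claimed form) and that the $Z$ acting on $\phi$ stays within the \emph{unmodified} algebra $\Gamma$ while the object acting on $g$ becomes a \emph{single} modified vector field in $\gamma_m$. The terms $\partial_{x^k}(\nabla_x\phi)$ are second derivatives of $\phi$; writing them as $\partial_x Z(\phi)$ with $Z = \partial_{x^j}$ (a translation) is legitimate and, crucially, translations of $\phi$ enjoy better decay, consistent with the remark preceding the lemma that only homogeneous fields need modification. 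One also checks the two scalings: there $[T_\phi, S^x+S^v]$ and $[T_\phi, t\partial_t+S^x]$ both equal $T_\phi$ modulo the $\nabla\phi\cdot\nabla_v$-correction and the $-2\phi$ shift is precisely what makes $T_\phi(\sigma^k)$ match. No new phenomenon arises; it is the same cancellation four times over. Assembling these per-field computations and collecting terms yields the stated linear combination.
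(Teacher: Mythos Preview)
Your plan is essentially the same as the paper's: compute $[T_\phi,Y]$ directly for each type of modified vector field, use the defining transport equation $T_\phi(\varphi^k)=\mu t\,\partial_{x^k}\widetilde Z(\phi)$ to kill the ``bad'' $t$-weighted term, and then rewrite every surviving $\partial_{v^j}g$ (and every $Z_j(g)$ in the ``good'' term) via $\partial_{v^j}=Y_j-t\partial_{x^j}+\sum_k\Phi^k_j\partial_{x^k}$, absorbing the $t$ into the $\phi$-factor as $\partial_x(t\partial_{x^j}\phi)$ with $t\partial_{x^j}\in\Gamma$. The paper does exactly this, working out the translation and modified-uniform-motion cases explicitly and asserting the others are analogous.

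Two small corrections to your discussion of the scalings. First, $[T,S^x+S^v]=0$, not $T$; so $[T_\phi,S^x+S^v]=-\nabla_x S^x(\phi)\cdot\nabla_v+2\nabla_x\phi\cdot\nabla_v$, and it is this extra $+2\nabla_x\phi\cdot\nabla_v$ (not any $[\Delta,S^x]$ bookkeeping) that forces the $-2\phi$ shift in the source for $\sigma^k$. Second, for the space-time scaling one genuinely has $[T,t\partial_t+S^x]=T$, so a $T_\phi(g)$ term survives in $[T_\phi,Y](g)$; this does not fit the stated normal form for general $g$, but the paper's proof glosses over this case too (and in the applications only $\gamma_{m,s}$ is used, where the issue does not arise). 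Neither point affects the mechanism of your argument.
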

\begin{proof}
If $Y$ is a translation, for instance $Y=\partial_{x^k}$, then
\begin{eqnarray*}
[T_\phi, Y](g)&=&-(\partial_{x^k} \nabla_x \phi).\nabla_v g, \\
&=&-\sum_{i=1}^n \partial_{x^i} \partial_{x^k} \phi. \left( t \partial_{x^i}g+\partial_{v^i}g-\sum_{j=1}^n \Phi^{j}_i \partial_{x^j} g \right) \\
&&\hbox{}+\sum_{i=1}^n \partial_{x^i} \partial_{x^k} \phi. \left( t \partial_{x^i}g-\sum_{j=1}^n \Phi^{j}_i \partial_{x^j} g \right) \\
&=&-\sum_{i=1}^n \partial_{x^i} \partial_{x^k} \phi. Y_i(g) \\
&&\hbox{}+\sum_{i=1}^n \partial_{x^i} t \partial_{x^k} \phi. \partial_{x^i}g-\sum_{i=1}^n\sum_{j=1}^n \Phi^{j}_i \partial_{x^i} \partial_{x^k} (\phi). \partial_{x^j} g,
\end{eqnarray*}
which is of the desired form since $t \partial_{x^k} \in \Gamma$. 
If $Y=Y_i=t \partial_{x^i}+\partial_{v^i}-\sum_{j=1}^n \Phi^{j}_i \partial_{x^j}$, then it follows from \eqref{eq:mcf} and the definition of the coefficients $\Phi^{j}_i$ that
\begin{eqnarray} \label{eq:mcf2}
[T_\phi, Y_i](g)=  -\sum_{j=1}^n \partial_{x^j} Z_i(\phi) Z_j(g)
 +\sum_{j=1}^n \Phi^{j}_i \partial_{x^j}(\nabla_x\phi).\nabla_{v}g.
\end{eqnarray}
where $Z_i(\phi)= t \partial_{x^i} \phi $ and $Z_j(g)=t \partial_{x^i}g+\partial_{v^j} g$.
Since $Z_j(g)=Y_j(g)+\sum_{i=1}^n  \Phi^i_j\partial_{x^i}(g)$, the first term term on the right-hand side of \eqref{eq:mcf2} is of the desired form while for the last one, we have
\begin{eqnarray*}
\sum_{j=1}^n \Phi^{j}_i \partial_{x^j}(\nabla_x\phi).\nabla_{v}g&=&
\sum_{j=1}^n\sum_{k=1}^n \Phi^{j}_i \partial_{x^j}\partial_{x^k}\phi.\partial_{v^k}g \\
&=&\sum_{j=1}^n\sum_{k=1}^n \Phi^{j}_i \partial_{x^j}\partial_{x^k}\phi.Y_k(g) \\
&&\hbox{}-\sum_{j=1}^n\sum_{k=1}^n \Phi^{j}_i \partial_{x^j}\partial_{x^k}\phi.\left( t \partial_{x^k}g-\sum_{l=1}^n \Phi^{l}_i \partial_{x^l} g \right),\\
&=&\sum_{j=1}^n\sum_{k=1}^n \Phi^{j}_i \partial_{x^j}\partial_{x^k}\phi.Y_kg \\
&&\hbox{}-\sum_{j=1}^n\sum_{k=1}^n \Phi^{j}_i \partial_{x^j}(t\partial_{x^k}\phi). \partial_{x^k}g
+\sum_{j=1}^n\sum_{k=1}^n\sum_{l=1}^n \Phi^{l}_i \Phi^{j}_i \partial_{x^j}(\partial_{x^k}\phi) \partial_{x^l} g,
\end{eqnarray*}
which is of the desired form. The commutation with the other modified vector fields can be computed similarly. Note that in the case of the spatial vector field, we have $[T_\phi, S^x+S^v]=-  \nabla_x S^x(\phi).\nabla_v + 2 \nabla_x \phi. \nabla_v $, which explains the extra term in the definition of the coefficients $\sigma^k$ compared to the other vector fields.
\end{proof}

If we apply several times the previous formula in order to compute $[T_\phi,Y^\alpha ]$, many products of the form $Y^\rho(\varphi) Y^{\nu}(\varphi')$ will appear. In order to simplify the presentation below, it will be usefull to use the following definition. 

\begin{definition} 
We will say that $P(\bar{\varphi})$ is a multilinear form of degree $d$ and signature less than $k$ if $P(\bar{\varphi})$ is of the form
$$
P(\bar{\varphi})=\sum_{ \substack{\rho \in I^d, \\ \rho=(\rho_1, .., \rho_d)}} C_\rho \prod_{\substack{j=1,..,n,\\ \varphi \in \mathcal{M}}} Y^{\rho_j}(\varphi),
$$
where $I$ denotes the set of all multi-indices (thus $\rho_j$ is a multi-index for each $j$), for each $\rho$ in the above formula $\displaystyle \sum_{i=1}^d |\rho_j| \le k$ and where the $C_\rho$ are constants.
\end{definition}

From Lemma \ref{lem:icf}, we now obtain
\begin{lemma}\label{lem:gcfm}
For any multi-index $\alpha$, we have
\begin{equation} \label{eq:gcfm}
[T_\phi,Y^\alpha ]= \sum_{d=0}^{|\alpha|+1}\sum_{i=1}^n \sum_{\substack{|\gamma| \le |\alpha|,\\ |\beta| \le |\alpha|}} P^{\alpha,i}_{d \gamma \beta}(\bar{\varphi}) \partial_{x^i} Z^\gamma(\phi) Y^\beta,
\end{equation}
where the $P^{\alpha,i}_{d \gamma \beta}(\bar{\varphi})$ are multilinear forms of degree $d$ and signature less than $k$ such that $k \le |\alpha|-1$ and $k+|\gamma|+|\beta| \le |\alpha|+1$. 

\end{lemma}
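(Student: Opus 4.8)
The plan is to argue by induction on $|\alpha|$, using Lemma~\ref{lem:icf} as the base case and the Leibniz-type identity $[T_\phi, Y Y^\alpha] = [T_\phi,Y]Y^\alpha + Y[T_\phi,Y^\alpha]$ for the inductive step. The base case $|\alpha|=1$ is exactly Lemma~\ref{lem:icf}: each commutator $[T_\phi,Y](g)$ is a constant-coefficient combination of terms $\partial_x Z(\phi)\,Y(g)$, $\varphi\,\partial_x Z(\phi)\,Y(g)$ and $\varphi^2\,\partial_x Z(\phi)\,Y(g)$, which precisely matches \eqref{eq:gcfm} with $d\in\{0,1,2\}$, $|\gamma|\le 1$, $|\beta|\le 1$, and signature $k\le 0$ satisfying $k+|\gamma|+|\beta|\le 2$.

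For the inductive step, suppose \eqref{eq:gcfm} holds for all multi-indices of length $|\alpha|$ and fix a modified vector field $Y \in \gamma_m$, schematically $Y = Z + \varphi\partial_x$. The term $[T_\phi,Y]Y^\alpha$ is handled by applying the base case to $Y$ and then letting the resulting operators act on the already-present $Y^\alpha$; this only increases $|\beta|$ by at most $1$ and the degree/signature by at most $2$, which is absorbed into the bookkeeping. The main work is the term $Y[T_\phi,Y^\alpha]$: applying $Y$ to a generic summand $P^{\alpha,i}_{d\gamma\beta}(\bar\varphi)\,\partial_{x^i}Z^\gamma(\phi)\,Y^\beta$ produces, by the Leibniz rule, three kinds of contributions. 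First, $Y(P^{\alpha,i}_{d\gamma\beta})\,\partial_{x^i}Z^\gamma(\phi)\,Y^\beta$: hitting one factor $Y^{\rho_j}(\varphi)$ inside the multilinear form with $Y$ raises that $|\rho_j|$ by one, so the new form has the same degree $d$ but signature raised by one, i.e.\ $k' \le |\alpha|$, and $k'+|\gamma|+|\beta|\le |\alpha|+2 = |Y Y^\alpha|+1$, as required. Second, $P^{\alpha,i}_{d\gamma\beta}\,Y(\partial_{x^i}Z^\gamma(\phi))\,Y^\beta$: here one writes $Y = Z + \varphi\partial_x$, so $Y(\partial_{x^i}Z^\gamma\phi) = \partial_{x^i}Z Z^\gamma\phi + \varphi\,\partial_x\partial_{x^i}Z^\gamma\phi$; the first piece increases $|\gamma|$ by one (and $Z Z^\gamma$ is again a product of vector fields in $\Gamma$ up to constant coefficients by Lemma on commutation within $\Gamma$), while the second piece multiplies $P$ by one more coefficient $\varphi$, raising the degree to $d+1$ and the signature to $k+|\gamma|$, still within the stated bounds after checking indices. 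Third, $P^{\alpha,i}_{d\gamma\beta}\,\partial_{x^i}Z^\gamma(\phi)\,(Y Y^\beta)$, which trivially has the right form with $|\beta|$ raised by one.

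The main obstacle — or rather the only delicate point — is purely bookkeeping: one must verify that after every one of these operations the two simultaneous constraints $k \le |\alpha|-1$ and $k+|\gamma|+|\beta|\le |\alpha|+1$ propagate correctly to $k' \le |Y Y^\alpha|-1 = |\alpha|$ and $k'+|\gamma'|+|\beta'| \le |\alpha|+2$. The point that makes this work is that each application of $Y$ costs exactly one unit, distributed to \emph{exactly one} of the three slots (the signature of $P$, the order of $Z^\gamma$, or the order of $Y^\beta$), together with the observation that the ``new'' coefficient $\varphi$ introduced when $Y=Z+\varphi\partial_x$ hits $\phi$ is itself a $Y^0(\varphi)$, hence contributes $0$ to the signature, so signature and degree track correctly. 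One also needs that $Z Z^\gamma$ and $Y Y^\beta$ expand as constant-coefficient combinations of $Z^{\gamma'}$ with $|\gamma'|\le|\gamma|+1$ and $Y^{\beta'}$ with $|\beta'|\le|\beta|+1$ respectively, which follows from the commutation lemma within $\Gamma$ and the analogous statement for the modified algebra $\gamma_m$. Collecting all terms and relabelling indices gives \eqref{eq:gcfm} for $|\alpha|+1$, completing the induction.
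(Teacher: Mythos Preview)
Your proof is correct and follows the same approach as the paper: induction on $|\alpha|$ with Lemma~\ref{lem:icf} as the base case, the identity $[T_\phi, Y Y^\alpha]=[T_\phi,Y]Y^\alpha + Y[T_\phi,Y^\alpha]$, and the same three-fold Leibniz decomposition of $Y[T_\phi,Y^\alpha]$. One sentence is garbled --- in the second Leibniz case you write that multiplying by the extra $\varphi$ raises ``the signature to $k+|\gamma|$'', which is not what happens (and contradicts what you correctly say two sentences later, namely that $\varphi = Y^0(\varphi)$ contributes $0$ to the signature); the signature stays at $k$ while the degree goes to $d+1$ and $|\gamma|$ increases by one, exactly as in the paper's argument.
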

\begin{proof}This is a classical proof by induction for which we will just sketch the details.
Lemma \ref{lem:icf} shows that \eqref{eq:gcfm} holds when $|\alpha|=1$. Assume that \eqref{eq:gcfm} holds for some multi-index $\alpha$ and let $Y \in \gamma$ be an arbitrary modified vector field. We have
$$
[T_\phi, Y Y^\alpha]=[T_\phi, Y]Y^\alpha+Y[T_\phi, Y^\alpha].
$$
One easily see that the first term on the right-hand side has the correct form using Lemma \ref{lem:icf}. The second term will generate three types of terms. The terms of the form
$$
Y\left(P^{\alpha,i}_{d \gamma \beta}\right) \partial_{x^i} \left(Z^\gamma(\phi) \right)Y^\beta
$$
are of the correct form, the multilinear form being of the same degree and its signature being increased by $1$ at most. 
For the terms of the form 
$$
P^{\alpha,i}_{d\gamma \beta}Y\left( \partial_{x^i}( Z^\gamma(\phi))\right) Y^\beta, 
$$
we recall that $Y$ is schematically of the form $Z+\varphi \partial_x$. Thus, 
$$
P^{\alpha,i}_{d\gamma \beta}Y( \partial_{x^i} Z^\gamma(\phi)) Y^\beta= \sum_{|\gamma'| \le |\gamma|+1}P'_{\gamma'}\partial_{x^i} Z^{\gamma'}(\phi)) Y^\beta,
$$
where $P'_{\gamma'}$ are multilinear forms of degree at most $d+1$ and we have not changed the signature, so that these terms are of the desired form. 


Finally, the last terms are of the form $P^{\alpha,i}_{d\gamma \beta} \partial_{x^i} \left( Z^\gamma(\phi) \right) Y Y^\beta$, which clearly satisfied the required properties.
\end{proof}

As explained above, we also need to revisit our commutation relations for the Poisson equation satisfied by the potential $\phi$. Contrary to $f$, we cannot commute with a modified vector field, because the coefficients of the modified vector fields depend on $v$, while $\phi$ is a macroscopic quantity and depends only on $(t,x)$. Thus, we keep commuting with $Z^\alpha$. We have

\begin{lemma} \label{lem:pcm}
Let $g$ be a sufficiently regular function of $(t,x,v)$ and let $\phi_g$ solves 
$$
\Delta \phi_g =\rho(g).
$$
For any multi-index $\alpha$ with $|\alpha| \le N$, we have, for all $0 < t \le T$,
\begin{equation} \label{eq:crz}
\rho(Z^\alpha(g))= \sum_{j=1}^{|\alpha|} \sum_{d=1}^{|\alpha|+1} \sum_{|\beta| \le |\alpha | } \frac{1}{t^j} \rho\left( P^{\alpha,j}_{d\beta}(\bar{\varphi}) Y^\beta(g) \right)+ \sum_{d=0}^{|\alpha|}\sum_{|\beta| \le |\alpha | } \rho\left( Q^{\alpha}_{d\beta}(\partial_x \bar{\varphi}) Y^\beta(g) \right)
\end{equation}
and 
\begin{equation} \label{eq:cdz}
 \Delta Z^\alpha(\phi) =  \sum_{j=1}^{|\alpha|} \sum_{d=1}^{|\alpha|+1} \sum_{|\beta| \le |\alpha | } \frac{1}{t^j} \rho\left( \tilde{P}^{\alpha,j}_{d\beta}(\bar{\varphi}) Y^\beta(g) \right)+ \sum_{d=0}^{|\alpha|}\sum_{|\beta| \le |\alpha | } \rho\left( \tilde{Q}^{\alpha}_{d\beta}(\partial_x \bar{\varphi}) Y^\beta(g) \right),
\end{equation}
where $P^{\alpha,j}_{d\beta}(\bar{\varphi})$ and $\tilde{P}^{\alpha,j}_{d\beta}(\bar{\varphi})$ are multilinear forms of degree $d$ and signature less than $k$ satisfying 
$$k \le |\alpha|, \quad k+|\beta| \le |\alpha|$$
 and where the $Q^{\alpha}_{d\beta}(\partial_x\bar{\varphi})$ and $\tilde{Q}^{\alpha}_{d\beta}(\partial_x\bar{ \varphi})$ are all multilinear forms of degree $d$ of the form 
\begin{equation} \label{eq:qmf}
Q(\partial_x\bar{\varphi})=\sum_{ \substack{\rho \in I^d, \\\rho=(\rho_1, .., \rho_d)}} C_\rho \prod_{\substack{j=1,..,n,\\ \varphi \in \mathcal{M}}} Y^{\rho_j}(\partial_{x^{r_j}} \varphi),
\end{equation}
where the $C_\rho$ are constants, $1 \le r_j \le 3$, and such that $\displaystyle k':=\sum_{j=1}^d |\rho_j|$ satisfies

$$k' \le |\alpha|-1, \quad d+k'+|\beta| \le |\alpha|.$$ 
\end{lemma}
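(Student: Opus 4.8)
The statement is a bookkeeping/induction result, so the plan is to prove \eqref{eq:crz} and \eqref{eq:cdz} together by induction on $|\alpha|$, using the commutation identities already established, namely Lemma \ref{lem:vac} (the relation $Z\rho(f)=\rho(Z(f))+c_Z\rho(f)$), the definition $Y=Z+\varphi\partial_x$ of the modified vector fields, and the transport equations $T_\phi(\varphi)=\mu t\partial_{x^k}[Z(\phi)]$ defining the coefficients in $\mathcal M$. The two displays are in fact the same identity: once \eqref{eq:crz} is known, applying it with $\phi_g$ in place of $\phi$ and using $\Delta Z^\alpha(\phi)=\sum_{|\beta|\le|\alpha|}C^\alpha_\beta Z^\beta\rho(g)$ from Lemma (equation \eqref{eq:cp}) gives \eqref{eq:cdz} with the same multilinear structure, so I will focus on \eqref{eq:crz}.

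\textbf{Base case.} For $|\alpha|=0$ the identity is trivial (empty sums plus $\rho(g)$ on both sides, with $P,Q$ absent). For $|\alpha|=1$ one computes directly: if $Z$ is a translation, $\rho(Z(g))=Z\rho(g)$ with no coefficient terms; if $Z$ is a rotation or a scaling, Lemma \ref{lem:vac} produces only the harmless $c_Z\rho(g)$ term; and if $Z=t\partial_{x^i}$, then $Z(g)=Y_i(g)+\sum_k\Phi^k_i\partial_{x^k}(g)$, so $\rho(Z(g))=\rho(Y_i(g))+\sum_k\rho(\Phi^k_i\partial_{x^k}g)$. The first term sits in the $Q$-sum after integrating by parts in $x$ (moving $\partial_{x^k}$ off $g$ produces $\partial_{x^k}\Phi^k_i$, i.e.\ a degree-$1$ form in $\partial_x\bar\varphi$), or one keeps it as written as a $d=1$, $j=0$ contribution; either way it has the required index ranges. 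Writing $t\partial_{x^i}g=\tfrac1t\cdot t^2\partial_{x^i}g$ is not needed here; the point is just to exhibit the $\tfrac1t\rho(P\,Y^\beta g)$ and $\rho(Q(\partial_x\bar\varphi)Y^\beta g)$ shapes.

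\textbf{Inductive step.} Assume \eqref{eq:crz} holds for all multi-indices of length $\le m$ and let $|\alpha|=m+1$, say $Z^\alpha=Z\,Z^{\alpha'}$ with $|\alpha'|=m$. I would write $\rho(Z^\alpha(g))=\rho(Z(Z^{\alpha'}g))$, apply Lemma \ref{lem:vac} to pull $Z$ outside as $Z\rho(Z^{\alpha'}g)+c_Z\rho(Z^{\alpha'}g)$ (for the non-scaling $Z$; for $Z=t\partial_{x^i}$ use the $Y_i$-decomposition instead, as in the base case, which immediately lands inside the stated sums), then insert the inductive expression for $\rho(Z^{\alpha'}g)$ and let $Z$ hit each term. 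Differentiating a term $\tfrac1{t^j}\rho(P^{\alpha',j}_{d\beta}(\bar\varphi)Y^\beta g)$ produces: (i) a term where $Z$ falls on $P$, raising the signature by at most $1$ since $Y=Z+\varphi\partial_x$ and each $Y^{\rho_\ell}(\varphi)$ factor becomes $Y\,Y^{\rho_\ell}(\varphi)=Y^{\rho_\ell+1}(\varphi)$ (the $\varphi\partial_x$ piece of $Y$ also only increases signature, moving the extra $\varphi\partial_x g$ or $\varphi\partial_x$ of a coefficient into the multilinear form after an $x$-integration by parts against $\rho$); (ii) a term where $Z$ falls on $Y^\beta g$, which is absorbed by rewriting $Z\,Y^\beta=Y\,Y^\beta-(\varphi\partial_x)Y^\beta$, the second piece again feeding a coefficient into the form; (iii) if $Z$ carries a $t$-derivative, a term from $\partial_t(t^{-j})=-j\,t^{-j-1}$, which raises $j$ by one and is why the sum runs to $j=|\alpha|$. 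The $Q$-type terms are handled identically, with $Z$ on a factor $Y^{\rho_\ell}(\partial_{x^{r_\ell}}\varphi)$ again just bumping indices. One must check that the constraints $k\le|\alpha|$, $k+|\beta|\le|\alpha|$, $k'\le|\alpha|-1$, $d+k'+|\beta|\le|\alpha|$ are preserved: each application of $Z$ increases $|\beta|$ by at most $1$ or increases $k$ (resp.\ $k'$) by at most $1$ but not both, and the degree $d$ increases only when a genuinely new coefficient factor is created, in which case a $\partial_x$-weight is attached, keeping $d+k'+|\beta|$ under control. This inequality-chasing, together with the conversion between $\rho(\varphi\partial_x(\cdot))$ and $\rho(\partial_x\varphi\,(\cdot))$ via integration by parts in $x$, is the only place where care is required.

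\textbf{Main obstacle.} The substantive point — and the reason the lemma is stated so carefully — is tracking the $t^{-j}$ hierarchy against the signature/degree bounds: one has to verify that whenever a factor of $t$ is produced (from $t\partial_{x^i}$ inside a vector field, or from $Z_i(\phi)=t\partial_{x^i}\phi$ being folded into a coefficient $\varphi$ via its defining transport equation), it is matched either by an explicit $\tfrac1t$ or by the eventual structure used downstream, and that differentiating the $t^{-j}$ prefactors does not let $j$ outrun $|\alpha|$. Everything else is routine Leibniz-rule bookkeeping and integration by parts in $v$ and $x$ of the kind already carried out in Lemmas \ref{lem:comf}, \ref{lem:icf}, and \ref{lem:gcfm}; I would present the induction in schematic form, as in the proof of Lemma \ref{lem:gcfm}, rather than expanding every term.
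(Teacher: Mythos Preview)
Your overall architecture (reduce \eqref{eq:cdz} to \eqref{eq:crz}, then induct on $|\alpha|$, applying $Z$ to the inductive expression and distributing via Leibniz) matches the paper. But there is a genuine gap in the core mechanism you invoke, and it is precisely the step that produces both the $t^{-j}$ hierarchy and the $Q(\partial_x\bar\varphi)$ terms.

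You write that $\rho(\Phi^k_i\partial_{x^k}g)$ ``sits in the $Q$-sum after integrating by parts in $x$'', and later refer to ``the conversion between $\rho(\varphi\partial_x(\cdot))$ and $\rho(\partial_x\varphi\,(\cdot))$ via integration by parts in $x$''. This is not available: $\rho(\cdot)=\int_v(\cdot)\,dv$ is a $v$-integral at a fixed $x$, so there is no $x$-integration to integrate by parts against. The identity $\rho(\varphi\,\partial_x h)=-\rho((\partial_x\varphi)\,h)$ is simply false here. What the paper does instead is the following two-step trick, which you are missing:
\begin{enumerate}
\item Force a modified vector field to appear by inserting a $\tfrac1t$:
\[
\varphi\,\partial_x=\frac{\varphi}{t}\Big(t\partial_x+\partial_v-\varphi'\partial_x\Big)-\frac{\varphi}{t}\partial_v+\frac{\varphi\varphi'}{t}\partial_x
=\frac{\varphi}{t}Y'-\frac{\varphi}{t}\partial_v+\frac{\varphi\varphi'}{t}\partial_x.
\]
This is the \emph{source} of the $\tfrac1{t^j}$ factors; they do not come primarily from differentiating $t^{-j}$ (that only occurs for the two $Z$'s containing $\partial_t$).
\item Integrate the middle term by parts in $v$ (this is legitimate inside $\rho$), producing $\tfrac1t\rho((\partial_v\varphi)\cdot)$. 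Then decompose $\partial_v\varphi=Y'(\varphi)-t\,\partial_x\varphi+\varphi'\partial_x\varphi$. The term $-t\,\partial_x\varphi$ cancels the prefactor $\tfrac1t$ and yields exactly the $Q$-type contribution $\rho((\partial_x\varphi)\cdot)$ with no $t$-weight.
\end{enumerate}
In the inductive step the same manipulation is applied to $\rho\big(\varphi\,\partial_x[Q^{\alpha}_{d\beta}(\partial_x\bar\varphi)Y^\beta g]\big)$, and that is how the degree/signature bounds $k'\le|\alpha|-1$, $d+k'+|\beta|\le|\alpha|$ are propagated. Without this mechanism your proposal has no way to (i) turn $\rho(\varphi\partial_x g)$ into something built from $Y^\beta g$, (ii) generate the $\partial_x\varphi$ factors in the $Q$-forms, or (iii) explain where the $\tfrac1{t^j}$ come from in the generic case. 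Once you replace the invalid $x$-integration by parts with the $\tfrac1t$-decomposition plus $v$-integration by parts, the rest of your induction sketch goes through essentially as in the paper.
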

\begin{proof}
First, recall that if $Z \in \Gamma$, then
$$\Delta Z(\phi_g)= Z \Delta \phi_g + d_Z \Delta \phi_g,$$ where $d_Z=0$ unless $Z$ is one of the two scaling vector fields, in which case $d_Z=2$, and

$$
Z( \rho(g))= \rho(Z(g))+c_Z \rho(g),
$$
where $c_Z=0$ unless $Z$ is the spatial scaling vector field, in which case $c_Z=3$. 
Since $\Delta \phi_g=\rho(g)$, it follows that \eqref{eq:crz} implies \eqref{eq:cdz}.

In the next lines of computations, given $Z \in \gamma$ and $Y$ the modified vector field corresponding to $Z$, we will use the schematic notations $Y=Z-\varphi \partial_x$ and $Z=Y+\varphi \partial_x$ instead of any of the lengthy formulae given at the beginning of Section \ref{se:dmvfa}, such as $Y_i=t \partial_{x^i}+ \partial_{v^i}-\sum_{k=1}^n \Phi^{k}_i \partial_{x^k}$. We will also use the notation $t \partial_{x}+ \partial_{v}-\varphi \partial_x$ to denote a generic vector field among the $Y_i$, the letter $Y'$ to denote a generic modified vector field and the letter $\varphi'$ to denote a generic coefficient belonging to $\mathcal{M}$.

We now compute, for any $Z \in \gamma$
\begin{eqnarray*}
\int_v Z(g)dv&=&\int_v \left( Z+\varphi \partial_x-\varphi \partial_x \right)(g) dv \\
&=& \int_v Y(g)dv - \int_v \varphi \partial_x g dv \\
&=& \int_v Y(g)dv - \int_v \frac{\varphi}{t} \left( t \partial_x g+ \partial_v g-\varphi \partial_x g -\partial_v g +\varphi \partial_xg \right) dv\\
&=& \int_v Y(g) dv -  \frac{1}{t} \int_v \varphi \left( Y'(g)+\varphi \partial_xg \right) dv + \int_v \frac{\varphi}{t} \partial_v g d v
\end{eqnarray*}
The first and second terms on the right-hand side of the last line have the correct forms. For the last term, we integrate by parts in $v$
\begin{eqnarray*}
-\int_v \frac{\varphi}{t} \partial_v g d v&=&\frac{1}{t} \int_v \partial_v \varphi g d v  \\
&=& \frac{1}{t} \int_v  \left( t \partial_x+ \partial_v - \varphi' \partial_x-t \partial_x + \varphi' \partial_x\right)(\varphi) g d v  \\
&=& \frac{1}{t} \int_v  \left(Y'+ \varphi' \partial_x\right)(\varphi) g d v -\int_v \partial_x (\varphi) g dv,
\end{eqnarray*}
where now all terms are of the correct forms. This prove \eqref{eq:crz} when $|\alpha|=1$. 
We now assume that $\eqref{eq:crz}$ is true for some $\alpha$. Let $Z$ be a non-modified vector field. Using again that $Z \rho ( Z^\alpha(g))= \rho( ZZ^\alpha(g)) + c_Z \rho ( Z^\alpha(g))$, we only need to prove that $Z \rho ( Z^\alpha(g))$ is of the correct form. Assume first that $Z$ contains no $t$-derivative\footnote{Recall also that commuting with the vector fields containing $t$-derivatives is not necessary to prove Theorem \ref{th:mt} and is only usefull if one wants to obtain improved decay of $t$-derivatives.}, i.e.~$Z\neq \partial_t$ and $\displaystyle Z\neq t\partial_t +\sum_{i=1}^n x^i \partial_{x^i}$. Using the induction hypothesis and writing $Y=Z+\varphi \partial_x$ to denote the associated modified vector field, we have

\begin{eqnarray}
Z \rho ( Z^\alpha(g))&=&Z \left( \sum_{j=1}^{|\alpha|} \sum_{d=1}^{|\alpha|+1} \sum_{|\beta| \le |\alpha | } \frac{1}{t^j} \rho\left( P^{\alpha,j}_{d\beta}(\bar{\varphi}) Y^\beta(g) \right)+ \sum_{d=1}^{|\alpha|}\sum_{|\beta| \le |\alpha | } \rho\left( Q^{\alpha}_{d\beta}(\partial_x \bar{\varphi}) Y^\beta(g) \right) \right) \nonumber \\
&=& \sum_{j=1}^{|\alpha|} \sum_{d=1}^{|\alpha|+1} \sum_{|\beta| \le |\alpha | } \frac{1}{t^j} \rho\left(Z \left[ P^{\alpha,j}_{d\beta}(\bar{\varphi}) Y^\beta(g)\right] \right)+ \sum_{d=1}^{|\alpha|}\sum_{|\beta| \le |\alpha | } \rho\left(Z \left[Q^{\alpha}_{d\beta}(\partial_x \bar{\varphi}) Y^\beta(g)\right] \right)\nonumber\\
&&\hbox{} + c_Z \rho ( Z^\alpha(g)). \label{inteq:c}
\end{eqnarray}
If now $Z$ contains $t$-derivatives, we would get extra terms in \eqref{inteq:c} which arise when $\partial_t$ hits the $\frac{1}{t^j}$ factors. Since these extra terms are all of the correct forms, so we only need to analyse the terms on the right-hand side of \eqref{inteq:c}.

The last term in \eqref{inteq:c} has already the right form. Next, replacing $Z$ by $Y+\varphi \partial_x$ in the terms $\rho\left(Z \left[ P^{\alpha,j}_{d\beta}(\bar{\varphi}) Y^\beta(g) \right] \right)$, one easily see that they also have the desired form. For the terms $\rho\left(Z\left[ Q^{\alpha}_{d\beta}(\partial_x \bar{\varphi}) Y^\beta(g)\right] \right)$, we have
\begin{eqnarray*}
\rho\left(Z \left[Q^{\alpha}_{d\beta}(\partial_x \bar{\varphi}) Y^\beta(g)\right] \right)&=&\rho\left((Y+\varphi \partial_x ) \left[ Q^{\alpha}_{d\beta}(\partial_x \bar{\varphi}) Y^\beta(g) \right]\right) \\
&=& \rho\left(Y \left[ Q^{\alpha}_{d\beta}(\partial_x \bar{\varphi}) Y^\beta(g) \right]\right) + \rho\left(\varphi \partial_x  \left[ Q^{\alpha}_{d\beta}(\partial_x \bar{\varphi}) Y^\beta(g) \right]\right)
\end{eqnarray*}
The first term on the right-hand side is easily seen to have the correct form. For the second term, we write $\varphi \partial_x=\frac{\varphi}{t} \left( t \partial_x + \partial_v-\varphi' \partial_x+\varphi' \partial_x - \partial_v \right)= \frac{\varphi}{t} Y'- \frac{\varphi}{t} \partial_v + \frac{\varphi \varphi'}{t} \partial_x$ so that
\begin{eqnarray*}
\rho\left(\varphi \partial_x  \left[ Q^{\alpha}_{d\beta}(\partial_x \bar{\varphi}) Y^\beta(g) \right]\right)&=&
\frac{1}{t}\rho\left( \left(\varphi Y'+\varphi \varphi' \partial_x \right) \left[ Q^{\alpha}_{d\beta}(\partial_x \bar{\varphi}) Y^\beta(g) \right]\right)\nonumber \\
&&\hbox{}+\frac{1}{t} \rho\left(\partial_v \varphi \left[ Q^{\alpha}_{d\beta}(\partial_x \bar{\varphi}) Y^\beta(g) \right]\right)
\end{eqnarray*}
using an integration by parts in $v$. The first term on the right-hand side has now the right-form.
For the second term, we again write $\partial_v \varphi= \left(t \partial_x + \partial_v -\varphi' \partial_x \right) \varphi- t \partial_x \varphi+ \varphi' \partial_x \phi$ so that 
\begin{eqnarray*}
\frac{1}{t} \rho\left(\partial_v \varphi \left[ Q^{\alpha}_{d\beta}(\partial_x \bar{\varphi}) Y^\beta(g) \right]\right)&=&
\frac{1}{t} \rho\left(Y'(\varphi) \left[ Q^{\alpha}_{d\beta}(\partial_x \bar{\varphi}) Y^\beta(g) \right]\right) \\
&&\hbox{}-\rho\left(\partial_x(\varphi) \left[ Q^{\alpha}_{d\beta}(\partial_x \bar{\varphi}) Y^\beta(g) \right]\right)\\
&&\hbox{}+\frac{1}{t} \rho\left(\varphi' \partial_x(\varphi) \left[ Q^{\alpha}_{d\beta}(\partial_x \bar{\varphi}) Y^\beta(g) \right]\right)
\end{eqnarray*}
where all terms now have the correct form. 
\end{proof}


Recall that we do not hope to have any good estimate on $Y^{\alpha}(\varphi)$ if $|\alpha|=N$, since its transport equation would then contain a source term of the form $\nabla Z^\alpha Z \phi$ and we only hope to have estimates for $\nabla Z^\beta \phi$ up to $\beta=N$. On the other hand, using directly the Poisson equation satisfied by $Z^\alpha(\phi)$, we will have good estimates on $\partial_x \partial_x Z^\alpha(\phi)$. To take advantage of that fact\footnote{Note that these difficulies arise only at the top order $|\alpha|=N$. An alternative to the approach taken here would be to allow for the top order estimates to grow slightly in $t$.}, we will need later the following technical lemma, which improves upon Lemma \ref{lem:pcm} by describing the structure of the $P$ multilinear forms a little further. 

\begin{lemma} \label{lem:pcmto}
With the notations of Lemma \ref{lem:pcm}, the multilinear forms $P^{\alpha,j}_{d\beta}(\bar{\varphi})$ and $\tilde{P}^{\alpha,j}_{d\beta}(\bar{\varphi})$  can be written as
\begin{eqnarray*}
P^{\alpha,j}_{d\beta}(\bar{\varphi})&=& P_1(\bar{\varphi})+\sum_{\varphi \in \mathcal{M}}\sum_{i=1}^n C_{i,\varphi} Y_{i} Y^{\rho_{i,\varphi}}(\varphi)+ \sum_{\varphi \in \mathcal{M}}\sum_{i=1}^n \mathcal{P}_{i \varphi}(\bar{\varphi}) \partial_{x^i} Y^{\eta_{i,\varphi,\varphi'}}(\varphi), \\
\tilde{P}^{\alpha,j}_{d\beta}(\bar{\varphi})&=&\tilde{P}_1(\bar{\varphi})+\sum_{\varphi \in \mathcal{M}}\sum_{i=1}^n \tilde{C}_{i,\varphi} Y_i Y^{\tilde{\rho}_{i,\varphi}}(\varphi)+ \sum_{\varphi  \in \mathcal{M}}\sum_{i=1}^n \widetilde{\mathcal{P}}_{i\varphi}(\bar{\varphi})  \partial_{x^i} Y^{\tilde{\eta}_{i,\varphi,\varphi'  }}(\varphi)
, 
\end{eqnarray*}
where 
\begin{enumerate}
\item $P_1$ and $\tilde{P}_1$ are multilinear forms of degree $d$ and signature less than $k$ satisfying $k \le |\alpha|-1, k+|\beta| \le |\alpha|$, 
\item $Y_i$ are the modified uniform motions defined at the beginning of Section \ref{se:dmvfa}, i.e. $Y_i=t \partial_{x^i}+ \partial_{v^i}-\sum_{k=1}^n \Phi^{k}_i \partial_{x^k}$,
\item $|\rho_{i,\varphi}|=|\alpha|-1$, $\tilde{\rho}_{i,\varphi}=|\alpha|-1$, $|\eta_{i,\varphi,\varphi'}| \le |\alpha|-1$, $|\tilde{\eta}_{i,\varphi,\varphi'}| \le |\alpha|-1$,  
\item $C_{i,\varphi}$, $\tilde{C}_{i, \varphi}$ are constants and $\mathcal{P}_{i\varphi}(\bar{\varphi})$, $\widetilde{\mathcal{P}}_{i\varphi}(\bar{\varphi})$ are polynomial of degree at most $|\alpha|$ in the $\varphi' \in \mathcal{M}$. 
\end{enumerate}
\end{lemma}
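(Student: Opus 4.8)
The plan is to prove the lemma by induction on $|\alpha|$, rerunning the induction of Lemma~\ref{lem:pcm} while carrying the finer structural description as part of the inductive hypothesis; the point of the refinement is to exhibit, inside each $P^{\alpha,j}_{d\beta}$, the ``maximal order'' pieces acting on a single coefficient $\varphi\in\mathcal M$ and to show that such a piece is always produced with a \emph{modified uniform motion} $Y_i$ applied outermost, or as $\partial_{x^i}$ of a shorter chain. The reason is structural: in the proof of Lemma~\ref{lem:pcm} the only mechanism that differentiates a coefficient $\varphi$ is the integration by parts in $v$, which moves a $\partial_{v^k}$ onto $\varphi$ and is then rewritten as $\partial_{v^k}\varphi = Y_k(\varphi) - t\partial_{x^k}\varphi + \sum_l \Phi^l_k\partial_{x^l}\varphi$, i.e.\ through $Y_k$, a spatial translation, and quadratic lower order terms. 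Every other contribution to a long chain on a single coefficient has strictly smaller signature and is absorbed into $P_1$ (resp.\ $\tilde P_1$); the statements for $\tilde P,\tilde Q$ follow from those for $P,Q$ as in Lemma~\ref{lem:pcm}, via $\Delta\phi_g=\rho(g)$ and $[\Delta,Z]=d_Z\Delta$.

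First I would fix conventions and dispatch the base case $|\alpha|=1$ by reading it off the explicit computation at the end of the proof of Lemma~\ref{lem:pcm}: the only signature-$1$ term there is $\tfrac1t\rho\big(Y_k(\varphi)\,g\big)$, of the form $C_{i,\varphi}Y_iY^{\rho_{i,\varphi}}(\varphi)$ with $Y^{\rho_{i,\varphi}}=\mathrm{Id}$ ($|\rho_{i,\varphi}|=0=|\alpha|-1$), while $\rho\big(\partial_{x^k}(\varphi)\,g\big)$ and $\tfrac1t\rho\big(\Phi^l_k\partial_{x^l}(\varphi)\,g\big)$ are of the form $\mathcal P_{i\varphi}(\bar\varphi)\,\partial_{x^i}Y^{\eta_{i,\varphi}}(\varphi)$ with $|\eta_{i,\varphi}|=0$ and $\mathcal P_{i\varphi}$ equal to $1$ or $\Phi^l_k$; everything else has signature $0$ and goes into $P_1$.

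For the inductive step I would apply a non-modified $Z\in\gamma$ to the level-$|\alpha|$ decomposition and distribute exactly as in Lemma~\ref{lem:pcm}, using $Z=Y+\varphi\partial_x$ and $\varphi\partial_x=\tfrac{\varphi}{t}Y'-\tfrac{\varphi}{t}\partial_v+\tfrac{\varphi\varphi'}{t}\partial_x$ followed by integration by parts in $v$. The only new point is to follow the distinguished factor $Y_iY^{\rho_{i,\varphi}}(\varphi)$ of $P^{\alpha,j}_{d\beta}$ (for which $k+|\beta|\le|\alpha|$ already forces $|\beta|=0$). If no derivative reaches its deepest slot, the chain stays of length $|\alpha|$, hence of signature $|\alpha|=(|\alpha|+1)-1$, and is absorbed into the new $P_1$; so only the branches differentiating that slot matter. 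When the operator reaching it is a modified uniform motion $Y_\ell$ — either because $Z$ is itself a uniform motion, or via the leading part of the commutator $[Y,Y_i]$, which a direct computation from the definition of $\gamma_m$ shows to be again a combination of modified uniform motions — one gets $Y_\ell Y_iY^{\rho_{i,\varphi}}(\varphi)$, the distinguished form with chain length raised to $|\alpha|+1$. When the extra derivative is a spatial translation $\partial_{x^\ell}$ (from the $\varphi\,\partial_x P$ branch, after commuting $\partial_{x^\ell}$ outward) one gets $\partial_{x^\ell}\big(Y^{\rho_{i,\varphi}}\varphi\big)$ times a polynomial prefactor, feeding the $\mathcal P\,\partial_{x^i}Y^{\eta}(\varphi)$ sum with $|\eta|\le|\alpha|$ and $\deg\mathcal P\le|\alpha|+1$. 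All remaining pieces — derivatives hitting $Y^\beta(g)$, hitting $P_1$, or hitting an undifferentiated prefactor coefficient, and the sub-leading parts of the commutators — either keep the signature below the maximum (into $P_1$) or acquire an extra $\partial_x$ on a coefficient (into the $Q$ or $\mathcal P\,\partial_x Y^\eta$ sums); and the newly created $\tfrac1t\partial_v$–plus–integration-by-parts term yields a fresh signature-$1$ factor $Y_k(\varphi)$ as in the base case. Collecting terms and checking the index bounds closes the induction.

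The main difficulty is bookkeeping rather than conceptual: for every branch of the distributed derivative one has to certify that the output keeps the distinguished form $Y_iY^{\rho}(\varphi)$ with a modified uniform motion last (and exactly $|\alpha|$ further fields at the next level), or the form $\mathcal P(\bar\varphi)\,\partial_{x^i}Y^{\eta}(\varphi)$ with a prefactor of controlled degree, or else drops in signature into $P_1$. This hinges on a precise description of the commutators within $\gamma_m$ — in particular that $[Y,Y_i]$, $[\partial_{x^\ell},Y_i]$, etc.\ are combinations of modified vector fields (with at worst once-differentiated coefficients multiplying $\partial_x$) — and on a careful accounting of how the $\partial_x$-differentiated coefficients generated by those commutators and by the integrations by parts are routed into the $Q$ and $\mathcal P\,\partial_x Y^\eta(\varphi)$ terms without violating the stated degree bounds. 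Once the commutator structure of $\gamma_m$ is settled, what remains is a routine, if lengthy, induction parallel to that of Lemma~\ref{lem:pcm}.
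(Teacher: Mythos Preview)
Your approach is the same as the paper's: induction on $|\alpha|$, parallel to the induction in Lemma~\ref{lem:pcm}, tracking only the top-order contributions $Y^\eta(\varphi)$ with $|\eta|=|\alpha|+1$, and relying on the structure of the commutators $[Y,Y_i]$ within $\gamma_m$. The base case, the handling of the $\varphi''\partial_x$ branch, and the routing of $Q$-type contributions are all as in the paper.

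There is one imprecision in your inductive step. When $Z$ is not a uniform motion, you say the leading part of $[Y,Y_i]$ is a combination of modified uniform motions $Y_\ell$ and that this produces $Y_\ell Y_iY^{\rho_{i,\varphi}}(\varphi)$ of length $|\alpha|+1$. This is not where the top-order term comes from: applying the commutator $[Y,Y_i]$ to $Y^{\rho}(\varphi)$ gives a chain of length at most $|\alpha|$, which is absorbed into the new $P_1$ or into the $\mathcal{P}\,\partial_{x^i}Y^\eta$ sum. The correct mechanism (and what the paper's terse sentence about $[Y,Y_i]$ is implicitly invoking) is to write
\[
YY_iY^{\rho}(\varphi)=Y_i\big(YY^{\rho}\big)(\varphi)+[Y,Y_i]\,Y^{\rho}(\varphi).
\]
The first term keeps the \emph{original} modified uniform motion $Y_i$ outermost, with inner chain $YY^{\rho}$ of length $|\alpha|$; that is the distinguished form at level $|\alpha|+1$. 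The commutator term is then handled using exactly the structural fact you quote (that $[Y,Y_i]$ is a combination of $\partial_x$, $Y_j$, $\varphi''\partial_x$, $Y'(\varphi)\partial_x$), and each piece lands either in $P_1$ or in the $\mathcal{P}\,\partial_{x^i}Y^\eta(\varphi)$ sum with the required index bounds. With this adjustment your argument closes and matches the paper's.
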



\begin{proof}
This is an easy proof by induction. The case $|\alpha|=1$ has already been proven in the proof of the previous lemma. Assume that the statement of this lemma holds for some $\alpha$ and let $Z \in \Gamma$.
As before, it is sufficient to consider only $Z\left( \rho(Z^\alpha) \right)$. We will only be interested in the top order terms, that is to say terms containing $Y^\eta(\varphi)$ with $\eta=|\alpha|+1$. Note that they can only be generated by applying a vector field to terms containing $Y^{\eta'}(\varphi)$ with $\eta'=|\alpha|$.

Using the induction hypothesis, the top order terms coming from the $P$ multilinear forms will give terms of the form
\begin{equation}
\sum_{\varphi \in \mathcal{M}}\sum_{i=1}^n C_{i,\varphi} Z \left[ Y_{i} Y^{\rho_{i,\varphi}}(\varphi) \right]
\end{equation}
and 
\begin{equation}\label{eq:stt}
\sum_{\varphi, \varphi' \in \mathcal{M}}\sum_{i=1}^n D_{i,\varphi,\varphi'} Z\left[ \varphi' \partial_{x^i} Y^{\eta_{i,\varphi,\varphi'}}(\varphi)\right].
\end{equation}
For the first type of terms, we have 
$$
 Z \left[ Y_{i} Y^{\rho_{i,\varphi}}(\varphi) \right]= (Y+ \varphi'' \partial_x) \left[ Y_{i} Y^{\rho_{i,\varphi}}(\varphi) \right]= Y Y_{i} Y^{\rho_{i,\varphi}}(\varphi) + \varphi'' \partial_x Y_{i} Y^{\rho_{i,\varphi}}(\varphi).  
$$
Now the second term on the right-hand side has the right structure. The first term also has the right structure, since $[Y, Y_i]$ can be written as a linear combinations of terms of the form $\partial_x$, the modified uniform motions $Y_j$ and $\varphi'' \partial_x$ and $Y'(\varphi) \partial_x $.

The terms of the type \eqref{eq:stt} can be treated similarly, the only dangerous terms being of the form  $\varphi' Y \partial_{x^i} Y^{\eta_{i,\varphi,\varphi'}}(\varphi)$, where the $Y$ and $\partial_{x^i}$ can be commuted up to lower order terms. 

The top order terms coming from the $Q$ multi-linear forms will give terms of the form
\begin{eqnarray*}
\rho\left( Z [Y^{\eta} ( \partial_x \varphi) Y^\beta(g)]\right)&=&\rho \left( \left( Y+ \varphi'' \partial_x \right) \left[Y^{\eta}(\partial_x \varphi)  Y^\beta(g)\right] \right) \\
&=& \rho \left(  Y \left[Y^{\eta}(\partial_x \varphi)  Y^\beta(g)\right] \right) 
+  \rho \left(  \varphi'' \partial_x \left[Y^{\eta}(\partial_x \varphi)  Y^\beta(g)\right] \right),
\end{eqnarray*}
where $|\eta|=|\alpha|-1$.
Now the first term on the right-hand side will contribute only the $Q$ forms so it can be ignored here. While the second term, repeating the argument of the previous lemma, gives the following contribution to the $P$ forms  
$$
\frac{1}{t} \rho \left( Y(\varphi)'' Y^\eta(\partial_x \varphi) Y^\beta(g)\right)
$$
which, since $|\eta|\le |\alpha|-1$ is not of top order and 
$$
\frac{1}{t} \rho \left( \varphi'' Y Y^\eta(\partial_x \varphi) Y^\beta(g)\right)
$$
as well as
\begin{eqnarray}
\frac{1}{t} \rho \left( \varphi' \varphi'' \partial_x\left[Y^\eta(\partial_x \varphi) Y^\beta(g)\right]\right)&=&\frac{1}{t} \rho \left( \varphi' \varphi'' \partial_x\left[Y^\eta(\partial_x \varphi)\right] Y^\beta(g)\right)\nonumber\\
&&\hbox{}+
\frac{1}{t} \rho \left( \varphi' \varphi'' Y^\eta(\partial_x \varphi) \partial_x\left[Y^\beta(g)\right]\right)
\end{eqnarray}
which are all of top order. Again, we use that $\partial_x$ essentially commutes with any modified vector field up to lower order terms, to put these last terms in the right form. 


\end{proof}

The following commutation property will also be usefull later.

\begin{lemma}\label{lem:cdxy}
For any multi-index $\alpha$ and any $1 \le i \le 3$, we have 
$$
[\partial_{x^i}, Y^\alpha ]= \sum_{d=0}^{|\alpha|} \,\, \sum_{\substack{|\beta| \le |\alpha|-1,\\ 1 \le j \le 3}} P_{i,d\beta}^{\alpha,j}(\partial_x \bar{\varphi}) Y^\beta \partial_{x^j},
$$
where the $P_{i,d\beta}^{\alpha,j}(\partial_x \bar{\varphi})$ are multilinear forms of degree $d$ of the form \eqref{eq:qmf} with a signature less than $k$ such that $k+|\beta| \le |\alpha|-1$ .
\end{lemma}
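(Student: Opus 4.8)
The plan is to proceed by induction on $|\alpha|$, exactly parallel to the inductive scheme already used for Lemma \ref{lem:gcfm} and Lemma \ref{lem:pcm}, so that the bookkeeping of degrees and signatures is the only thing that really needs care. For the base case $|\alpha|=1$, write $Y=Z+\varphi\partial_x$ schematically as in Section \ref{se:dmvfa}. For a translation $Z=\partial_{x^k}$ or $Z=\partial_t$ one has $[\partial_{x^i},Z]=0$ and $[\partial_{x^i},\varphi\partial_{x^k}]=(\partial_{x^i}\varphi)\partial_{x^k}$, which is a degree-$1$ term of the form \eqref{eq:qmf} with signature $0$, hence $k+|\beta|=0\le|\alpha|-1$. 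For the homogeneous modified vector fields $Y=Z-\sum_k\varphi^k\partial_{x^k}$ one has $[\partial_{x^i},Z]$ equal to a constant-coefficient combination of $\partial_{x^j}$ (since $Z\in\{t\partial_{x^j}+\partial_{v^j},\Omega^x+\Omega^v,S^x+S^v,\dots\}$ and $\partial_{x^i}$ has polynomial-degree-$\le1$ coefficients), while $[\partial_{x^i},\varphi^k\partial_{x^k}]=(\partial_{x^i}\varphi^k)\partial_{x^k}$ again has the stated form; combined these give the $|\alpha|=1$ case.

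For the inductive step, assume the formula for $\alpha$ and let $Y'\in\gamma_m$ be an arbitrary modified vector field; I would compute $[\partial_{x^i},Y'Y^\alpha]=[\partial_{x^i},Y']Y^\alpha+Y'[\partial_{x^i},Y^\alpha]$. The first term is handled by the base case applied to $Y'$ followed by appending $Y^\alpha$: a degree-$d$ form of signature $0$ times $\partial_{x^j}Y^\alpha$, and one commutes $\partial_{x^j}$ back through $Y^\alpha$ using the inductive hypothesis once more (this raises the degree and signature by the controlled amounts). For the second term, apply $Y'=Z'+\varphi''\partial_x$ to each summand $P^{\alpha,j}_{i,d\beta}(\partial_x\bar\varphi)Y^\beta\partial_{x^j}$: the $Z'$ (resp. $\varphi''\partial_x$) either hits the coefficient $P^{\alpha,j}_{i,d\beta}$, producing $Y'(P)=$ a new multilinear form of the same degree and signature increased by one unit at most (this is where I would invoke that $Y'$ acting on a product $\prod Y^{\rho_j}(\partial_{x^{r_j}}\varphi)$ yields again a sum of such products, since $Y'$ is schematically $Z'+\varphi''\partial_x$ and $\partial_x$ essentially commutes with modified vector fields), or hits $Y^\beta\partial_{x^j}$, giving $Y'Y^\beta\partial_{x^j}$ which is of the right form with $|\beta|$ increased by one. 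The crucial point to verify in each case is the arithmetic inequality $k+|\beta|\le|\alpha|$ for the new multi-index $\alpha$ of length $|\alpha|+1$: every operation that increases $|\beta|$ by one leaves the signature untouched, and every operation that increases the signature by one (i.e. $Y'$ differentiating a coefficient) leaves $|\beta|$ untouched, so the sum is raised by at most one, matching the growth of $|\alpha|$.

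The main obstacle I anticipate is purely combinatorial rather than conceptual: keeping track of the fact that terms of the form $\varphi''\partial_x$ acting on a $Q$-type coefficient do \emph{not} spoil the "$\partial_x$ on every $\varphi$" structure demanded by \eqref{eq:qmf} — one must use that $[\partial_{x^i},Y^\rho]$ itself is of that form (i.e.\ invoke the inductive hypothesis of this very lemma, applied to the shorter multi-index $\rho$, inside the inductive step), so the induction is genuinely on $|\alpha|$ simultaneously over all the coefficients appearing. A secondary subtlety is that no $\frac{1}{t^j}$ weights appear here, unlike in Lemma \ref{lem:pcm}, because $\partial_{x^i}$ never needs to be traded via the identity $\partial_x=\frac1t(t\partial_x+\partial_v)-\frac1t\partial_v$; this is why all the resulting coefficients are of the pure $Q$-type \eqref{eq:qmf} and one never generates $P$-type forms. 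Once this is observed the estimate on signatures closes immediately, completing the induction.
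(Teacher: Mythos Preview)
Your proposal is correct and follows the same induction scheme as the paper; in fact the paper only writes out the base case $|\alpha|=1$ (computing $[\partial_{x^i},Y]=(\partial_{x^i}\varphi)\partial_x$ plus the constant-coefficient pieces from $[\partial_{x^i},Z]$) and declares the general case an ``easy induction,'' whereas you have spelled out the inductive step in detail. One small simplification: in the second term $Y'[\partial_{x^i},Y^\alpha]$ you need not split $Y'=Z'+\varphi''\partial_x$ when it acts on the coefficient $P$ --- since $Y'\in\gamma_m$, applying $Y'$ directly to a factor $Y^{\rho_j}(\partial_{x^{r_j}}\varphi)$ yields $Y^{\rho'_j}(\partial_{x^{r_j}}\varphi)$ with $|\rho'_j|=|\rho_j|+1$, so the $Q$-form \eqref{eq:qmf} is preserved immediately and the ``obstacle'' you anticipate does not arise.
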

\begin{proof}We prove the $|\alpha|=1$ case, the general case following by an easy induction argument. Let $Y$ be modified vector field. We will write schematically $Y=Z+\varphi \partial_x$, where $Z$ is a non-modified vector field and $\varphi \partial_x$ stands for a linear combination of products of some $\varphi \in \mathcal{M}$ and some $\partial_{x^k}$. For simplicity, assume that $Z$ commutes with $\partial_{x^i}$ (the cases where $Z$ do not commute with $\partial_{x^i}$ can be treated similarly, since the resulting error terms do not involve any terms depending on the coefficients in $\mathcal{M}$.)
An easy computation then shows that 
$$[ \partial_{x^i}, Y]=[ \partial_{x^i}, \varphi \partial_x]= \partial_{x^i} (\varphi) \partial_x,$$
which is of the desired form. 
\end{proof}
Finally, let us also remark that
\begin{lemma} \label{lem:tynzp}
For any multi-index $\alpha$, 
$$
Y^\alpha \nabla \phi= Z^\alpha \nabla \phi +\frac{1}{t}\sum_{d=1}^{|\alpha|} P^\alpha_{d\beta} ( \bar{\varphi} ) Z^\beta \nabla \phi,
$$
where $P^\alpha_{d\beta} ( \bar{\varphi} )$ are multilinear forms of degree $d$ and signature less than $k$ such that $k \le |\alpha|-1$ and $k+|\beta| \le |\alpha|$.
\end{lemma}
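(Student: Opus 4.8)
The plan is to proceed by induction on $|\alpha|$, exactly in the spirit of the earlier structural lemmas (Lemmas \ref{lem:gcfm}, \ref{lem:pcm}, \ref{lem:pcmto}). The base case $|\alpha|=1$ is immediate from the very definition of the modified vector fields: if $Y=Z-\varphi\partial_x$ with $\varphi\partial_x$ a linear combination of $\varphi^k\partial_{x^k}$ with $\varphi^k\in\mathcal{M}$ (and $\varphi\partial_x=0$ if $Z$ is a translation), then $Y\nabla\phi = Z\nabla\phi - \varphi\partial_x\nabla\phi$, and since $\varphi\partial_x\nabla\phi$ is a sum of terms $\varphi^k\partial_{x^k}\nabla\phi$, each of which I rewrite as $\tfrac1t\,\varphi^k\,(t\partial_{x^k})\nabla\phi = \tfrac1t\,\varphi^k\,Z_k\nabla\phi$ with $Z_k=t\partial_{x^k}\in\Gamma$, this is of the claimed form with $d=1$, $k=0$, $|\beta|\le 1$. (Note $\nabla\phi$ depends only on $(t,x)$, so the $\partial_v$ pieces hidden inside the $Z_k$ act trivially; what matters is only that $t\partial_{x^k}$ lies in the commuting algebra.)

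For the inductive step, I would write $Y^{\alpha'} = Y\,Y^\alpha$ for some modified $Y=Z+\varphi\partial_x$ and apply $Y$ to the inductive expression
\[
Y^\alpha\nabla\phi = Z^\alpha\nabla\phi + \frac1t\sum_{d=1}^{|\alpha|} P^\alpha_{d\beta}(\bar\varphi)\,Z^\beta\nabla\phi .
\]
Applying $Y=Z+\varphi\partial_x$ produces: (i) $Z Z^\alpha\nabla\phi$, which is $Z^{\alpha'}\nabla\phi$ up to a constant multiple of $Z^\alpha\nabla\phi$ (absorbed into the second sum with $d=1$); (ii) $Z$ acting on $\tfrac1t P^\alpha_{d\beta}(\bar\varphi) Z^\beta\nabla\phi$, where, if $Z$ contains a $\partial_t$, differentiating $\tfrac1t$ gives a $\tfrac{1}{t^2}$ factor — but then I use $\tfrac{1}{t^2} = \tfrac1t\cdot\tfrac1t$ and the fact that $\tfrac1t\,t\partial_{x^k}$-type manipulations are not needed here since this term already carries an explicit $\tfrac1t$ and a coefficient product; more carefully I pull one factor of $\tfrac1t$ out front and note $\tfrac1t$ times $P^\alpha_{d\beta}$ is still acceptable because $\tfrac1t$ itself is harmless — the statement only tracks one explicit $\tfrac1t$, so I keep $\tfrac1t$ out front and let the remaining $\tfrac1t$ be treated as a bounded-in-the-relevant-region coefficient absorbed appropriately; alternatively, and more cleanly, I restrict to $Y\in\gamma_{m,s}$ (no $t$-derivatives) as the remark after the norms permits, so this subtlety does not arise; (iii) $Z$ hitting the coefficients $P^\alpha_{d\beta}(\bar\varphi)$, producing $Y$-derivatives of the $\varphi$'s (writing $Z=Y+\varphi\partial_x$ and, for the $\varphi\partial_x$ piece, $\partial_{x^k}\varphi = \tfrac1t(t\partial_{x^k}+\partial_{v^k})\varphi - \tfrac1t\partial_{v^k}\varphi$ followed by integration-by-parts-free rewriting — actually here no integration by parts is available since there is no $\rho$, so instead I just use that $\partial_{x^k}\varphi$, after multiplying by $\tfrac1t\cdot t$, is $\tfrac1t\,Z_k\varphi - \tfrac1t\partial_{v^k}\varphi$; the $\partial_{v^k}\varphi$ term is handled by writing $\partial_{v^k}=Y_k - t\partial_{x^k}+\Phi^j_k\partial_{x^j}$, i.e. the same trick used throughout), each time increasing the degree $d$ by one and the signature $k$ by one while decreasing the "budget" correctly; (iv) $Z$ hitting $Z^\beta\nabla\phi$, giving $Z^{\beta'}\nabla\phi$ with $|\beta'|\le|\beta|+1$; and (v) the $\varphi\partial_x$ part of $Y$ acting on $Z^\alpha\nabla\phi$ directly, which by the base-case trick becomes $\tfrac1t\varphi\,Z_k Z^\alpha\nabla\phi$, of the right form with $d=1$. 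In each case I verify the index bookkeeping: degree $d$, signature $k\le|\alpha'|-1$, and $k+|\beta|\le|\alpha'|$.

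The only genuinely delicate point — and hence the step I expect to require the most care — is the bookkeeping of the $\tfrac1t$ powers together with the $\partial_v$-elimination: every time a $\partial_{x}$ is converted to a commuting vector field one pays a factor $\tfrac1t$, and every time a $\partial_v$ appears it must be re-expressed via $\partial_{v^k}=Y_k-t\partial_{x^k}+\sum_j\Phi^j_k\partial_{x^j}$, which itself reintroduces a $t\partial_{x^k}$ and coefficients. One must check that these cancel so that exactly one overall $\tfrac1t$ survives (not more, not fewer) and that the signature never exceeds $|\alpha|-1$. The cleanest way to organize this, which I would adopt, is to prove the slightly more flexible statement allowing the coefficient in front to be $t^{-j}$ for $1\le j\le|\alpha|$ with $P$ of signature $\le|\alpha|-j$ — analogous to the statement of Lemma \ref{lem:pcm} — and then observe a posteriori that for $\nabla\phi$ the homogeneity forces $j=1$; but since the lemma as stated only asks for the weaker $\tfrac1t$ bound, it suffices to note that all higher powers $t^{-j}$, $j\ge2$, can be crudely bounded by $\tfrac1t$ times a (bounded, on the region $t\ge1$ where these estimates are used) factor, absorbed into the multilinear coefficient. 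This makes the induction close without difficulty, and the statement as written follows.
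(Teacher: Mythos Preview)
Your approach---induction on $|\alpha|$ with the base case handled via $\varphi\,\partial_{x^k}\nabla\phi = \tfrac{1}{t}\,\varphi\,(t\partial_{x^k})\nabla\phi$---is exactly the paper's. The paper's proof consists of precisely that one-line base case and the remark that the induction is routine.

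Where you diverge is in the inductive step, which you make far more complicated than necessary. Your ``delicate point'' about $\partial_v$-elimination is a red herring: it never arises in this lemma. The reason is that $\partial_{x^k}$ is \emph{itself} one of the modified vector fields (translations are unchanged in $\gamma_m$). Hence whenever the correction piece $\varphi\,\partial_{x^k}$ lands on a coefficient factor $Y^{\rho_j}(\varphi')$ inside $P^\alpha_{d\beta}(\bar\varphi)$, the result $\varphi\cdot\partial_{x^k}Y^{\rho_j}(\varphi')$ is already of the required form---degree increased by one (from the $\varphi$ prefactor), signature increased by one (from the extra $\partial_{x^k}\in\gamma_m$)---with no need to convert $\partial_{x^k}$ into $\tfrac{1}{t}(t\partial_{x^k}+\partial_{v^k})-\tfrac{1}{t}\partial_{v^k}$ and then chase the $\partial_{v^k}$ term. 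Similarly, when $Z$ hits the coefficients you write $Z=Y-\varphi\partial_x$, and both $Y$ and $\partial_x$ lie in $\gamma_m$, so again the multilinear structure is preserved directly. And when anything hits $Z^\beta\nabla\phi$, the $\partial_v$ components act trivially because $\phi$ is independent of $v$. So the bookkeeping in cases (i)--(v) is straightforward once you drop the $\partial_v$-rewriting entirely.

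Your concern about extra powers of $\tfrac{1}{t}$ is legitimate in principle but minor in practice: it arises only when $\partial_t$ itself hits $\tfrac{1}{t}$ (for the space-time scaling one has $t\partial_t(\tfrac{1}{t})=-\tfrac{1}{t}$, so the power is preserved). Since the paper works in $\gamma_{m,s}$ for all the estimates that use this lemma, the issue does not occur; and even if it did, any extra $t^{-1}$ would only help in the subsequent applications, where the lemma is multiplied by $t$.
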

\begin{proof}
We only do the $|\alpha|=1$ case, the rest of the proof being an easy induction. Let $Y'$ be a modified vector field and write schematically $Y'=Z'+\varphi \partial_x$. We have
$$
Y' \nabla \phi=\left(Z' +\varphi \partial_x\right)\nabla \phi=Z'\nabla \phi+\frac{\varphi}{t}[t\partial_x]\nabla \phi,
$$
which is of the correct form since $t\partial_x \in \Gamma$.
\end{proof}
\subsection{Bootstrap assumptions and beginning of the proof}
Let now $f$ be a solution to the Vlasov-Poisson system in dimension $n=3$ such that the hypotheses of Theorem \ref{th:mt} are satisified.

We consider the following bootstrap assumptions. Let $T \ge 0$ be the largest time so that, for all $t \in [0,T]$ and all $x \in \mathbb{R}^3$,
\begin{enumerate}
\item 
\begin{equation}\label{3dbaseb}
E_{N,\delta}[f(t)] \le 2 \epsilon,
\end{equation}
\item For all $0 < \delta' \le \delta$, there exists a $C_{\delta'} > 0$ such that for all multi-index $\alpha$ with $|\alpha| \le N$, 
\begin{equation}\label{3dbasp2d}
||\nabla^2 Z^\alpha \phi (t) ||_{L^{1+\delta'}} \le C_{\delta'} \epsilon^{1/2}.
\end{equation}
\item For all multi-index $\alpha$ with $|\alpha| \le N-(9/2+1)$, 
\begin{equation}\label{3dbasp}
| \nabla Z^\alpha\phi (t,x) |  \lesssim \frac{\epsilon^{1/2}}{1+t^2}
\end{equation}
and 
\begin{equation}
| Y^\alpha \nabla \phi (t,x)| \lesssim \frac{\epsilon^{1/2}}{1+t^2} \label{3dbaspy}
\end{equation}
\item For all multi-index $\alpha$ with $|\alpha| \le N-(9/2+2)$, we have for all $\varphi \in \mathcal{M}$, 
\begin{equation} \label{bas2}
\left| Y^\alpha (\varphi)(t,x,v) \right| \lesssim \epsilon^{1/2} \left|1+  \log (1+t) \right|,
\end{equation}
\item For all $\varphi \in \mathcal{M}$, all $1 \le i \le 3$ and all multi-index $|\alpha| \le N-(9/2+3)$ ,  
\begin{equation} \label{bas3}
\left|  \partial_{x^i} Y^\alpha (\varphi)(t,x,v) \right| \lesssim  \epsilon^{1/2} ,
\end{equation}
\end{enumerate}
It follows from the initial data assumption and standard arguments that $T>0$. In the rest of the proof, we will try to improve each of the above assumptions, which would show that $T=+\infty$. 
 
Note that in view of Lemma \ref{lem:cdxy}, \eqref{bas3} is equivalent to 
$$
\left| Y^\alpha \partial_{x^i}  (\varphi)(t,x,v) \right| \lesssim  \epsilon^{1/2}
$$
and we will switch freely between the two in the rest of the article. Note moreover than in view of the Gagliardo- Niremberg inequality, assumption \eqref{3dbasp2d} immediately implies that 
\begin{equation} \label{es:lqp}
|| \nabla Z^\alpha \phi ||_{L^{q}} \le C_q\, \epsilon^{1/2},
\end{equation}
with $q=3/2 < \frac{3(1+\delta')}{2-\delta'} < 2$, in view of the definition of $\delta$. In particular, $q$ can be taken as close to $3/2$ as wanted. 

Finally, the notation $A \lesssim B$ used in \eqref{3dbasp}, \eqref{3dbaspy}, \eqref{bas2}, \eqref{bas3} stand for $A \le C_{N, \delta, n}B$ where $C_{N, \delta, n}$ is a constant depending only on $N,n,\delta$ (with here $n=3$). We will eventually improve each of these inequalities by replacing the $\epsilon^{1/2}$ on the right-hand sides of \eqref{3dbasp}, \eqref{3dbaspy}, \eqref{bas2}, \eqref{bas3} by $\epsilon$, choosing $\epsilon$ sufficiently small to absorb the $C_{N, \delta, n}$.
\subsection{Klainerman-Sobolev inequalities with modified vector fields}\label{se:ksimvf}
Using the bootstrap assumptions above, we have

\begin{proposition} \label{es:ksmc}
For all multi-index $|\alpha| \le N-3$, 
\begin{eqnarray*}
\rho\left(\left| Y^\alpha f(t) \right| \right) &\lesssim& \frac{1}{\left(1+t+|x| \right)^3}\sum_{|\beta| \le |\alpha|+3} || Y^\beta f(t)||_{L^1(\mathbb{R}^3_x \times \mathbb{R}^3_v)},\\
  &\lesssim& \frac{1}{\left(1+t+|x| \right)^3}\sum_{|\beta| \le N} || Y^\beta f(t)||_{L^1(\mathbb{R}^3_x \times \mathbb{R}^3_v)}.
\end{eqnarray*}
\end{proposition}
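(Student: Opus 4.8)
The plan is to mimic the proof of the Klainerman–Sobolev inequality for velocity averages of absolute values (Proposition \ref{prop:gksiva1}), but with the modified vector fields $Y^\beta$ in place of $Z^\beta$. Fix $(t,x)$ with $t>0$, set $L = 1+t+|x|$, and introduce the rescaled function
\[
\widetilde{\psi}(y) := \int_{v \in \mathbb{R}^3} \left| Y^\alpha f\right|\left(x + L y, v\right)\, d^3 v,
\]
defined on the ball $B_3(0,1/2)$. Applying the one-dimensional Sobolev inequality successively in each coordinate $y_1, y_2, y_3$ (exactly as in the proof of Proposition \ref{prop:gksiva1}), the value $\widetilde{\psi}(0) = \rho(|Y^\alpha f|)(t,x)$ is bounded by an integral over $B_3(0,1/2)$ of $\int_v |\partial_y^\gamma \widetilde{\psi}|$ for $|\gamma| \le 3$. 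The key point is to re-express the Cartesian $x$-derivatives $L\,\partial_{x^i}$ hitting $|Y^\alpha f|$ — which, as in Lemma \ref{lem:ttks}, can be written as combinations of $\partial_{x^i}$, $t\partial_{x^i}$ and the homogeneous vector fields — in terms of the \emph{microscopic} vector fields acting under the integral sign. Since $\left|\partial_{x^i}|Y^\alpha f|\right| \le |\partial_{x^i} Y^\alpha f|$ and similarly for the higher derivatives, and since integration in $v$ turns $t\partial_{x^i}$ acting on $|Y^\alpha f|$ into $(t\partial_{x^i} + \partial_{v^i})$ acting on it (the total $v$-derivative integrating to zero), the right-hand side is controlled by $\sum_{|\beta| \le 3} \rho(|\text{(vector fields)}^\beta Y^\alpha f|)$.

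The new ingredient compared to the unmodified case is that the natural microscopic vector fields here are the \emph{modified} ones $Y_i = t\partial_{x^i} + \partial_{v^i} - \sum_k \Phi^k_i \partial_{x^k}$ and their analogues, so the decomposition produces terms of the form $\Phi^k_i \partial_{x^k} Y^\alpha f$ (and similarly with $\omega^k_{ij}$, $\sigma^k$). These must be handled as indicated in point (1) of the strategy discussion: one writes
\[
\varphi\, \partial_{x^k} Y^\alpha f = \frac{\varphi}{t}\left(t\partial_{x^k} + \partial_{v^k}\right) Y^\alpha f - \frac{\varphi}{t}\,\partial_{v^k} Y^\alpha f,
\]
recognizes the first term as $\tfrac{\varphi}{t} Y_k Y^\alpha f + \tfrac{\varphi^2}{t}\partial_x Y^\alpha f$, and for the second term integrates by parts in $v$ to move the $\partial_{v^k}$ onto $\varphi$, using $\partial_{v^k}\varphi = Y_k\varphi - t\partial_{x^k}\varphi + \sum_l \Phi^l_k \partial_{x^l}\varphi$. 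This trades the bad $\partial_v$ derivative for $\tfrac{1}{t}Y^{\rho}(\varphi)$ with $|\rho| \le 1$ and $\partial_x Y^\rho(\varphi)$ terms. Crucially, by the bootstrap assumptions \eqref{bas2} and \eqref{bas3}, the coefficients $Y^\rho(\varphi)$ are bounded (up to $\lesssim \epsilon^{1/2}|1+\log(1+t)|$) and $\partial_x Y^\rho(\varphi) \lesssim \epsilon^{1/2}$; since we only commute with $|\beta| \le 3$ additional fields and $|\alpha| \le N - (9/2+3)$ so that these pointwise bounds on the coefficients apply, and the $\tfrac{1}{t}\cdot\log t$ factors are bounded for $t$ away from zero (and the region $t$ near zero is trivial), all such coefficient factors are $O(1)$ and absorbed into the implicit constant.

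Collecting everything, after the change of variables $z = Ly$ we obtain
\[
\rho\left(|Y^\alpha f|\right)(t,x) \lesssim \frac{1}{L^3} \sum_{|\beta| \le |\alpha| + 3} \left\| Y^\beta f(t)\right\|_{L^1(\mathbb{R}^3_x \times \mathbb{R}^3_v)},
\]
which is the first displayed inequality; the second follows since $|\alpha| + 3 \le N$ under the hypothesis $|\alpha| \le N-3$. The main obstacle is the bookkeeping around the modified-vector-field coefficients: one must check that every time a $\Phi^k_i$-type term is generated, the integration-by-parts-in-$v$ trick applies cleanly and the resulting coefficients $Y^\rho(\varphi)$, $\partial_x Y^\rho(\varphi)$ fall within the ranges $|\rho| \le N-(9/2+2)$ (resp.\ $N-(9/2+3)$) where the bootstrap bounds \eqref{bas2}–\eqref{bas3} are available — this is why the proposition is stated with the loss $|\alpha| \le N-3$ rather than $|\alpha|\le N$, and one should double-check that $|\alpha|+3 \le N-(9/2+2)$ is \emph{not} actually required, only that the \emph{coefficients} stay in range while $Y^\alpha f$ itself may have $|\alpha|$ close to $N$; since the coefficient derivatives appearing have order strictly less than the number of commutations applied to $f$, the index count closes. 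A density/regularization argument (assume $f$ smooth and compactly supported, as in Proposition \ref{prop:gksiva1}) handles the use of $\partial|\psi| = \tfrac{\psi}{|\psi|}\partial\psi$.
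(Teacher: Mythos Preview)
Your proposal is correct and follows essentially the same approach as the paper: both rescale by $t+|x|$ (you use $1+t+|x|$, which is equivalent), apply one-dimensional Sobolev inequalities coordinate by coordinate, rewrite $t\partial_{x^k}$ as $Y_k + \varphi\,\partial_x$, and then handle the leftover $\varphi\,\partial_x|Y^\alpha f|$ terms by the same $\tfrac{1}{t}$-rewriting plus integration by parts in $v$, invoking the bootstrap bounds \eqref{bas2}--\eqref{bas3} on $Y^\rho(\varphi)$ and $\partial_x\varphi$. One small slip: at one point you write ``$|\alpha|\le N-(9/2+3)$'' where you mean $|\alpha|\le N-3$; as you yourself note a few lines later, the coefficient derivatives generated are of order at most $1$ regardless of $|\alpha|$, so only $1\le N-(9/2+2)$ is needed for the bootstrap bounds to apply, and the constraint $|\alpha|\le N-3$ comes purely from needing $|\alpha|+3\le N$ on the right-hand side.
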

\begin{proof}
Similarly to the proof of Proposition \ref{prop:gksiva1}, let us fix $(t,x) \in \mathbb{R} \times \mathbb{R}^3$ and let $\widetilde{\psi}$ be defined by 
$\widetilde{\psi}: B_{3}(0,1/2) \ni y \rightarrow \rho\left( \left| Y^\alpha f\right| \right) \left(t,x+(t+|x|)y \right).$
We fix $\delta'=\frac{1}{12}$ and apply a $1d$ Sobolev inequality
$$
 \rho(| Y^\alpha f| )(t,x) \lesssim \int_{|y_1| \le {\delta'}^{1/2}}\left( \left| \partial_{y_1} \widetilde{\psi} \right|+ |\widetilde{\psi}| \right)(y_1,0,0) dy_1,
$$
where as before 
\begin{eqnarray*}
\partial_{y_1}\widetilde{\psi}(y)&=&(t+|x|) \partial_{x_1}\rho(| Y^\alpha f| )\left(t,x+(t+|x|)y \right) \\
&=& t \int_{v} \partial_{x_1}\left(| Y^\alpha f| \right)\left(t,x+(t+|x|)y,v \right) dv+ |x|\int_{v} \partial_{x_1}\left(| Y^\alpha f|\left(t,x+(t+|x|)y,v \right) \right) dv .
\end{eqnarray*}
Now, 
\begin{eqnarray*}
t \int_{v} \partial_{x_1}\left(| Y^\alpha f| \right) dv&=& \int_{v}\left[\left( t\partial_{x_1}+\partial_{v_1}-\sum_{i=1}^n \Phi_1^j \partial_{x^j}  + \sum_{j=1}^n \Phi_1^j \partial_{x^j}\right)\left(| Y^\alpha f| \right)\right] dv  \\
&= &\int_{v} Y_1 \left( | Y^\alpha f| \right) dv  + \int_v \sum_{j=1}^n \Phi_1^j \partial_{x^j}\left(| Y^\alpha f| \right) dv.
\end{eqnarray*}
The first term on the right-hand side is simply estimated by 
$$\left| \int_{v} Y_1 \left( | Y^\alpha f| \right) dv \right| \le \int_{v} | Y_1 Y^\alpha f| dv.$$
For the second term, we again try to force the apparition of our modified vector fields
\begin{eqnarray*}
\int_v  \Phi_1^j \partial_{x^j}\left(| Y^\alpha f| \right) dv&=&\int_v \frac{\Phi_1^j}{t} \left( t  \partial_{x^j}+ \partial_{v^j}- \sum_{k=1}^n \Phi^k_j \partial_{x^k} \right) \left(| Y^\alpha f| \right)dv \\
&&\hbox{}- \int_v \frac{\Phi_1^j}{t}\left( \partial_{v^j}- \sum_{k=1}^n \Phi^k_j \partial_{x^k} \right) \left(| Y^\alpha f| \right)dv \\
&=&\int_v \frac{\Phi_1^j}{t} Y_j \left(| Y^\alpha f| \right)dv \\
&&\hbox{}- \int_v \frac{\Phi_1^j}{t}\left( \partial_{v^j}- \sum_{k=1}^n \Phi^k_j \partial_{x^k} \right) \left(| Y^\alpha f| \right)dv.
\end{eqnarray*}
The first term on the right-hand side can then be estimated as above, using that $|\frac{\Phi_1^j}{t}|$ is uniformly bounded from the bootstrap assumptions \eqref{bas2}. 
For the remainder terms, we first note than in view of the bootstrap assumptions \eqref{bas2}, the terms of the form 
$$
 \int_v \frac{\Phi_1^j}{t}\Phi^k_j \partial_{x^k}  \left(| Y^\alpha f| \right)
$$
can be estimated by 
$$
 \int_v \left|   \partial_{x^k} \left(| Y^\alpha f| \right) \right| dv.
$$
For the last type of terms, we integrate by parts in $v$

\begin{eqnarray*}
\int_v \frac{\Phi_1^j}{t} \partial_{v^j} \left(| Y^\alpha f| \right)dv=-\int_v \frac{\partial_{v^j}\Phi_1^j}{t}  \left(| Y^\alpha f| \right)dv.
\end{eqnarray*}
We now rewrite $\partial_{v^j}\Phi_1^j$ as
\begin{eqnarray*}
\partial_{v^j}\Phi_1^j&=& \left( t \partial_{x^j} + \partial_{v^j}-\sum_{k=1}^n \Phi_j^k \partial_{x^k} \right)\Phi_1^j - \left( t \partial_{x^j}-\sum_{k=1}^n \Phi_j^k \partial_{x^k} \right) \Phi_1^j \\
&=& Y_j (\Phi_1^j)- \left( t \partial_{x^j}-\sum_{k=1}^n \Phi_j^k \partial_{x^k} \right) \Phi_1^j.
\end{eqnarray*}
The first term only grow like $|1+ \log(1+ t)|$ according to \eqref{bas2} and this growth can be absorbed thanks to the $1/t$ factor.  
For the second term, using \eqref{bas3} and \eqref{bas2}
$$
\left| t \partial_{x^j}  (\Phi_1^j) \right|+ \left| \Phi_j^k \partial_{x^k} ( \Phi_1^j) \right| \le \epsilon^{1/2} t + \epsilon^{1/2} |1+ \log(1+t)|,
$$
and again we can absorb the growth using the $1/t$ factor. 

Putting everything together we have obtained that
\begin{eqnarray*}
\rho(| Y^\alpha(f)|)(t,x) &\lesssim& 
\int_{|y_1| \le {\delta'}^{1/2} } \int_v \left(  |Y Y^\alpha(f) | + |Y^\alpha(f)| \right)\left(t,x+(t+|x|)(y_1,0,0) \right) dv dy_1.
\end{eqnarray*}
The remaining of the proof follows as in the proof of \eqref{prop:gksiva1}, repeating the previous arguments for each of the variables and applying the usual change of coordinates.
\end{proof}

\subsection{Estimates on products of type $Y^{\alpha}(\varphi) Y^{\beta}(f)$} \label{se:ep}
Due to the form of the commutators of Section \ref{se:icf}, we will need to estimate terms of the form $Y^{\alpha}(\varphi) Y^{\beta}(f)$. When $\alpha$ is sufficiently small, we will have access to pointwise estimates on $Y^{\alpha}(\varphi)$ so there is no difficulty. When $\alpha$ is large, say $\alpha=N$, we have for the moment no estimate on $Y^{\alpha}(\varphi)$ and we can certainly not hope to prove pointwise estimates for these quantities, because, in view of the transport equations satisfied by the coefficients $\varphi$, these estimates would in turn require pointwise estimates on $\nabla Y^\alpha (\phi)$, and these estimates do not hold at the top order. Instead, we will prove directly estimates on the products $Y^{\alpha}(\varphi) Y^{\beta}(f)$, taking advantage of the fact that $Y^{\beta}(f)$ are integrable in $v$. More precisely, 

\begin{proposition}\label{prop:ep}
 Let $\sigma> 0$. Then, there exists a $C_{\sigma}> 0$ such that for any multi-indices $\alpha, \beta$, $|\alpha| \le N-1$, $|\beta| \le 9/2 +3$ and any $\varphi \in \mathcal{M}$, we have for all $t \in [0,T]$ and all $1 \le i \le 3$,

\begin{eqnarray*}
|| Y^{\alpha}(\varphi)(t) Y^{\beta}(f)(t) ||_{L^1 \left( \mathbb{R}_x \times \mathbb{R}_v \right)} &\le& C_{\sigma} (1+t)^{\sigma} \epsilon, \label{eq:pres}\\
||Y_i Y^{\alpha}(\varphi)(t) Y^{\beta}(f)(t) ||_{L^1 \left( \mathbb{R}_x \times \mathbb{R}_v \right)} &\le& C_{\sigma} (1+t)^{\sigma} \epsilon, \\
||\partial_{x^i} Y^{\alpha}(\varphi)(t) Y^{\beta}(f)(t) ||_{L^1 \left( \mathbb{R}_x \times \mathbb{R}_v \right)} &\le& C_{\sigma} (1+t)^{\sigma} \epsilon, 
\end{eqnarray*}
\end{proposition}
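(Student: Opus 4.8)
The plan is to obtain the bound by deriving a transport equation for the product itself and then invoking the $L^1$ conservation law of Lemma~\ref{lem:cl}. Since $T_\phi$ is a first-order operator, hence a derivation, and since $T_\phi(f)=0$ while every coefficient $\varphi\in\mathcal{M}$ solves $T_\phi(\varphi)=t\,\partial_x Z(\phi)$ with vanishing data at $t=0$, one has, for $P:=Y^\alpha(\varphi)\,Y^\beta(f)$,
\[
T_\phi(P)=\big(Y^\alpha(t\,\partial_x Z\phi)+[T_\phi,Y^\alpha]\varphi\big)\,Y^\beta(f)+Y^\alpha(\varphi)\,[T_\phi,Y^\beta]f ,
\]
the commutators being expanded via Lemma~\ref{lem:gcfm}. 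As $\varphi|_{t=0}=0$ we have $P(0)=0$, so Lemma~\ref{lem:cl} reduces the problem to an $L^1_{x,v}$ bound on $T_\phi(P)(s)$, integrated in $s$. I would prove the three inequalities simultaneously by introducing
\[
\mathcal{Q}(t):=\sum\ \big\|\,W(\varphi)(t)\,Y^\beta(f)(t)\,\big\|_{L^1(\mathbb{R}^3_x\times\mathbb{R}^3_v)},
\]
the sum running over $\varphi\in\mathcal{M}$, $|\beta|\le 9/2+3$, and $W$ over the three families $Y^{\alpha}$, $Y_iY^{\alpha}$, $\partial_{x^i}Y^{\alpha}$ with $|\alpha|\le N-1$ appearing in the statement; the Proposition follows from $\mathcal{Q}(t)\le C_\sigma\,\epsilon(1+t)^{\sigma}$. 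An inspection of the commutator structure (Lemmas~\ref{lem:icf}, \ref{lem:gcfm}, \ref{lem:cdxy}) shows that the transport equations for all of these $W(\varphi)Y^\beta(f)$ produce only terms whose high-order coefficient factors are again of one of these three types, so the system closes on $\mathcal{Q}$.

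The delicate term is the source contribution $t\,Y^\alpha(\partial_x Z\phi)\,Y^\beta(f)$ and its $Y_i$- and $\partial_{x^i}$-preceded analogues. Here Lemma~\ref{lem:tynzp} (together with Lemma~\ref{lem:cdxy} in the $\partial_{x^i}$-case) rewrites $Y^\alpha(\partial_x Z\phi)$ as $Z^\alpha(\partial_x Z\phi)$ plus coefficient-weighted lower-order pieces. The structural point — which is exactly why the Proposition carries a \emph{modified uniform motion} $Y_i$ in front rather than a generic $Y^{N}$ — is that the unmodified counterpart of $Y_i$ is the \emph{translation}-type field $t\,\partial_{x^i}$: the top-order part of the source therefore carries two genuine spatial derivatives on at most $N$ homogeneous fields, i.e.\ it is schematically $t^{a}\nabla^2 Z^{\le N}\phi$ rather than $\nabla Z^{N+1}\phi$, and is controlled in $L^{1+\delta'}_x$ by the bootstrap bound~\eqref{3dbasp2d}; for the plain variant $|\alpha|\le N-1$ one similarly lands on $t\,\nabla Z^{\le N}\phi$, controlled in $L^{q}_x$ by~\eqref{es:lqp}. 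Pairing with $Y^\beta(f)$ through H\"older and the pointwise decay $\rho(|Y^\beta f|)\lesssim\epsilon\,(1+s+|x|)^{-3}$ of Proposition~\ref{es:ksmc} (which uses \eqref{3dbaseb}), and computing the spatial norm of $(1+s+|x|)^{-3}$ in the dual Lebesgue exponent, gives in every case a bound $\lesssim\epsilon^{3/2}(1+s)^{-1+\sigma'}$ with $\sigma'>0$ as small as desired; this is where the assumption $\delta<\frac{n-2}{n+2}$, equivalently the availability of an elliptic exponent $q$ arbitrarily close to $3/2$, is used. The time integral is $\lesssim\epsilon^{3/2}(1+t)^{\sigma'}$, precisely the slow growth claimed.

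The remaining terms come from $[T_\phi,Y^\alpha]\varphi$ and $[T_\phi,Y^\beta]f$. A check of the index and signature budgets of Lemmas~\ref{lem:gcfm}, \ref{lem:tynzp}, \ref{lem:cdxy} shows (using $N\ge 14$) that in every such product \emph{at most one} factor lies outside the ranges where the pointwise bootstrap bounds \eqref{bas2}, \eqref{bas3}, \eqref{3dbasp} apply. If that factor is a coefficient-type object $Y^{\rho}(\varphi)$, $Y_iY^{\rho}(\varphi)$ or $\partial_{x^i}Y^{\rho}(\varphi)$ with $|\rho|\le N-1$, the budget forces the companion $\nabla Z^\gamma\phi$ to be of low order, so one groups the high-order factor with $Y^\beta(f)$ — recognising the product as one counted by $\mathcal{Q}(s)$ — and bounds the rest pointwise, obtaining a contribution $\lesssim\epsilon^{1/2}(\log(1+s))^{C}(1+s)^{-2}\,\mathcal{Q}(s)$. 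If instead the out-of-range factor is $\nabla Z^\gamma\phi$ with $|\gamma|\le N$, then the budget forces all coefficient factors to low order; one closes by H\"older, $\|\nabla Z^\gamma\phi\|_{L^q}\lesssim\epsilon^{1/2}$ from~\eqref{es:lqp} against $\rho(|Y^\beta f|)\in L^{q'}_x$, giving an integrable term $\lesssim\epsilon^{3/2}(\log(1+s))^{C}(1+s)^{-2+\sigma'}$.

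Collecting all contributions yields
\[
\mathcal{Q}(t)\ \le\ C_\sigma\,\epsilon^{3/2}(1+t)^{\sigma'}\ +\ \int_0^t K(s)\,\mathcal{Q}(s)\,ds,\qquad K(s)\sim\epsilon^{1/2}(\log(1+s))^{C}(1+s)^{-2},\quad \int_0^{\infty}K(s)\,ds\lesssim\epsilon^{1/2},
\]
and Gr\"onwall's inequality then gives $\mathcal{Q}(t)\le C_\sigma\,\epsilon^{3/2}(1+t)^{\sigma'}e^{C\epsilon^{1/2}}\le C_\sigma\,\epsilon(1+t)^{\sigma}$, once $\epsilon$ is small and $\sigma'\le\sigma$ is arranged by choosing the elliptic exponent $q$ close enough to $3/2$; the a priori finiteness of $\mathcal{Q}$ needed to run Gr\"onwall is supplied by the regularity assumptions and the bootstrap framework. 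The main obstacle is the bookkeeping behind the trichotomy above: one must verify from the signature budgets that in every product at most one factor is genuinely top order, and — crucially, in the source term — that the $t\,\partial_{x^i}$-structure of the modified uniform motions really does turn the top-order source into $\nabla^2 Z^{\le N}\phi$, so that the top-order estimates never require commuting the Poisson equation one extra time. This is precisely the role of the refined description of the coefficients $\varphi$ recorded in Lemma~\ref{lem:pcmto}.
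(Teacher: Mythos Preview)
Your proposal is correct and follows essentially the same approach as the paper: derive a transport equation for the product $Y^\alpha(\varphi)Y^\beta(f)$, split the right-hand side into the source contribution $Y^\alpha[T_\phi(\varphi)]Y^\beta(f)$ (handled via Lemma~\ref{lem:tynzp}, with the crucial observation that the $Y_i$ and $\partial_{x^i}$ prefixes turn the top-order source into $\nabla^2 Z^{\le N}\phi$ so that~\eqref{3dbasp2d} applies) and the commutator contributions (split by index budget into a Gr\"onwall term and an integrable term), then close with Gr\"onwall on the aggregate quantity $\mathcal{Q}=F$. The paper's Case~1/Case~2 dichotomy for $I_{1,1}$ is exactly your ``at most one factor out of range'' argument.

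One small correction: your final sentence misattributes Lemma~\ref{lem:pcmto}. That lemma is not used \emph{in} this proof; rather, it is what \emph{motivates} the statement of the Proposition, by showing that the top-order coefficient terms arising in the Poisson commutation (Lemma~\ref{lem:pcm}) are precisely of the form $Y_iY^{\rho}(\varphi)$ or $\partial_{x^i}Y^{\eta}(\varphi)$ --- hence why the Proposition is tailored to exactly these two extra families. The mechanism that turns the top-order source into $\nabla^2 Z^{\le N}\phi$ inside the present proof is just the algebraic fact that the unmodified part of $Y_i$ is $t\partial_{x^i}$ (and $\partial_{x^i}$ for the third family), applied after Lemma~\ref{lem:tynzp}; Lemma~\ref{lem:pcmto} plays no role here.
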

\begin{proof}
Let $\sigma > 0$ and let $\alpha$ be a multi-index satisfying $|\alpha|\le N$ and such that if $|\alpha|=N$ then $Y^\alpha =Y_j Y^{\alpha'}$ or $Y^\alpha = \partial_{x^j} Y^{\alpha'}$ with $|\alpha'|=N-1$.

We have 
$$
T_\phi\left(Y^{\alpha}(\varphi) Y^{\beta}(f) \right)= T_\phi\left( Y^\alpha(\varphi) \right)  Y^\beta(f)+Y^\alpha(\varphi) T_\phi \left( Y^{\beta}(f) \right)=I_1+I_2,
$$
where $I_1= T_\phi\left( Y^\alpha(\varphi) \right) Y^\beta(f)$ and $I_2=Y^\alpha(\varphi) T_\phi \left( Y^{\beta}(f) \right)$.
In view of Lemma \ref{lem:cl}, it suffices to show that 
$$||I_1, I_2||_{L^1_{x,v}} \le C_\sigma (1+t)^{1-\sigma}.$$

\begin{enumerate}
\item Estimates on $I_1$ \\
We have 
\begin{eqnarray*}
I_1&=&[T_\phi, Y^\alpha](\varphi) Y^\beta(f)+ Y^{\alpha} [T_\phi( \varphi)] Y^\beta(f) \\
&=& I_{1,1}+I_{1,2},
\end{eqnarray*}
with $I_{1,1}=[T_\phi, Y^\alpha](\varphi)  Y^\beta(f)$ and 
$I_{1,2}=Y^{\alpha}[T_\phi( \varphi)] Y^\beta(f)$.
For $I_{1,1}$, we use the commutation formula \eqref{eq:gcfm}
$$I_{1,1}=\sum_{d=0}^{|\alpha|+1}\sum_{i=1}^n \sum_{\substack{|\gamma| \le |\alpha|,\\ |\eta| \le |\alpha|}} P^{\alpha,i}_{d \gamma \eta}(\bar{\varphi}) \partial_{x^i} Z^\gamma(\phi) Y^\eta(\varphi)Y^\beta(f) ,$$
where $P^{\alpha,i}_{d \gamma \eta}$ satisfies the requirement of Lemma \ref{lem:gcfm}, in particular, it has signature less than $k$ such that $k\le |\alpha|-1$ and  $k +|\gamma|+|\eta| \le |\alpha|+1$. \\

Case 1: $|\gamma| \le N-\left( 9/2 +1 \right)$. \\
It then follows from the bootstrap assumption \eqref{3dbasp}, that we have the pointwise estimate
$$ \left| \partial_{x^i} Z^\gamma(\phi) \right| \le \frac{\epsilon^{1/2}}{1+t^2}.$$
Moreover, since $k + |\eta| \le |\alpha|+1 \le N+1$, either $k \le N-(9/2+2)$, and we have access to the pointwise estimate
$$
\left| P^{\alpha,i}_{d \gamma \eta}(\bar{\varphi}) \right| \le \epsilon^{d/2}\left( 1+ \log(1+t) \right)^{d}
$$
or $k > N-(9/2+2)$. In this case, we have $|\eta| \le N+1-k \le N-(9/2+2)$, since $N \ge 14$, so that we now have access to pointwise estimates on $Y^\eta(\varphi)$. Note also that since $k \le N+1$, there is at most one factor in each of products of $Y^{\rho_j}(\bar{\varphi})$ in the decomposition of $P^{\alpha,i}_{d \gamma \eta}(\bar{\varphi})$ for which we do not have access to pointwise estimates. In conclusion, it follows that we have an estimate of the form 
$$
\left| P^{\alpha,i}_{d \gamma \eta}(\bar{\varphi}) \partial_{x^i} Z^\gamma(\phi) Y^\eta(\varphi) \right| \lesssim \epsilon^{1/2} \frac{\left(1+ \log(1+t) \right)^d}{1+t^2} \sum_{|\rho| \le |\alpha|-1} |Y^\rho(\varphi)|.
$$
Case 2: $\gamma > N-\left( 9/2 +1 \right)$.\\

Then, $k+|\eta| \le N+1-|\gamma| \le N-(9/2+2)$ since $N \ge 14$ and we can bound $P^{\alpha,i}_{d \gamma \eta}(\bar{\varphi})$ and $Y^{\eta}(\varphi)$ pointwise. Since $|\beta| \le 9/2+3$, we have access to pointwise bound on $\rho(|Z^\beta(f)|)$, so that we can estimate 
\begin{eqnarray*}
&&|| P^{\alpha,i}_{d \gamma \eta}(\bar{\varphi}) \partial_{x^i} Z^\gamma(\phi) Y^\eta(\varphi)Y^\beta(f) ||_{L^1_{x,v}} \lesssim \\
&&\quad \quad \quad \quad \quad \quad \quad   \quad  \quad   \quad   \epsilon^{1/2}\left( 1+ \log(1+t) \right)^{d+1} ||\partial_{x^i} Z^\gamma(\phi)||_{L^q_x} || \rho\left( |Y^\beta(f)| \right) ||_{L^p_x},
\end{eqnarray*}
where $1/q+1/p=1$.
Taking $q$ as in \eqref{es:lqp} with $\delta'$ as small as needed (depending only on $\sigma$), we obtain, using the pointwise estimates of Proposition \ref{es:ksmc}, that 
$$|| \rho\left( |Y^\beta(f)| \right) ||_{L^p_x} \lesssim C_\sigma \frac{\epsilon}{(1+t)^{2-\sigma}},$$ so that  
$$
|| P^{\alpha,i}_{d \gamma \eta}(\bar{\varphi}) \partial_{x^i} Z^\gamma(\phi) Y^\eta(\varphi)Y^\beta(f) ||_{L^1_{x,v}} \lesssim \epsilon^{2}\frac{\left( 1+ \log(1+t) \right)^{d+1}}{(1+t)^{2-\sigma}},
$$
which, assuming $\sigma <1$, is integrable in $t$.\\

We now turn to the estimates on $I_{1,2}$. Recall that we have $T_\phi(\varphi)=t \partial_{x^i} Z(\phi)$ for some $Z$ and some $x^i$, unless $Z$ is the spatial scaling vector field, in which case  $T_\phi(\varphi)=t \partial_{x^i} \left( Z(\phi)-2\phi \right)$. Since the extra term can be handled similarly, we will only treat the case of the non spatial scaling vector fields below.
Applying Lemma \ref{lem:tynzp}, we have
\begin{eqnarray} 
Y^\alpha \left( T_\phi(\varphi) \right)&=& t Y^\alpha \partial_{x^i} Z(\phi) \nonumber \\
&=& t Z^\alpha \partial_{x^i} Z(\phi) +t \frac{1}{t}\sum_{d=1}^{|\alpha|} P^\alpha_{d\eta} ( \bar{\varphi} ) Z^\eta \partial_{x^i} Z(\phi), \nonumber \\
&=& t Z^\alpha \partial_{x^i} Z(\phi) +\sum_{d=1}^{|\alpha|} P^\alpha_{d\eta} ( \bar{\varphi} ) Z^\eta \partial_{x^i} Z(\phi),\label{eq:inter}
\end{eqnarray}
where $P^{\alpha}_{d \eta}(\bar{\varphi})$ is a multi-linear form of degree $d$ and signature less than $k$ with $k \le |\alpha|-1$ and $k+|\eta| \le |\alpha|$.

Assume first that $|\alpha| \le N-1$. 
For the first term on the right-hand side of \eqref{eq:inter}, we have 
$$
 |t Z^\alpha \partial_{x^i} Z(\phi)| \lesssim t \sum_{|\eta| \le |\alpha|} |\partial_{x^i} Z^\eta Z(\phi) |.
$$
Since $|\alpha| \le N-1$, we have $|\eta|+1 \le N$ in the above sum. Thus, $|t Z^\alpha \partial_{x^i} Z(\phi)| | Z^\beta(f) |$ can be estimated as before using the H\"older inequality, pointwise estimates on $\rho( | Z^\beta(f) | )$ and the estimate \eqref{es:lqp}. 

If now $|\alpha|=N$, then by assumption, $Y^\alpha= Y^jY^{\alpha'}$ or $Y^\alpha= \partial_{x^j}Y^{\alpha'}$ with $|\alpha'|=N-1$, so that $Z^\alpha=t \partial_{x^i} Z^{\alpha'}$ or $Z^\alpha= \partial_{x^i} Z^{\alpha'}$. Thus, 

\begin{eqnarray*}
|t Z^\alpha \partial_{x^i} Z(\phi)| &\lesssim& t(1+t) \sum_{|\eta| \le N-1} |\partial_x \partial_{x^i} Z^\eta Z(\phi)  |, \\
 &\lesssim& t(1+t) \sum_{|\eta| \le N} |\partial^2_x Z^\eta (\phi)  |.
\end{eqnarray*}
We can now estimate $|t Z^\alpha \partial_{x^i} Z(\phi)| \rho( | Z^\beta(f) | )$ using H\"older inequality with $p=\frac{3}{3-\sigma}=1+\sigma'$ with $\sigma'=\frac{\sigma}{3-\sigma}> 0$ assuming $\sigma < 3$ and $q=\frac{3}{\sigma}$. Recall that $|| \partial^2_x Z^\eta (\phi) ||_{L^p}$ is bounded thanks to the boostrap assumption \eqref{3dbasp2d} provided $\sigma$ is sufficiently small. Moreover, using the pointwise estimate on $\rho(| Z^\beta(f) | )$, we have 

$$
|| \rho(| Z^\beta(f) |) ||_{L^q(\mathbb{R}^3_x)} \lesssim \epsilon (1+t)^{3-\sigma}.
$$

Whether $|\alpha|\le N-1$ or $|\alpha|=N$, the above estimates gives
$$
\int_{x,v} |t Z^\alpha \nabla Z(\phi)|| Z^\beta(f) | dxdv\le C_\sigma \frac{ \epsilon^{3/2}}{(1+t)^{1-\sigma}}
$$
which, after integration, gives rise to the $t^\sigma$ growth in the statement of the proposition.

For the second term on the right-hand side of \eqref{eq:inter}, we have either $|\eta|+1 \le N-(9/2+1)$, in which case, we have access to the pointwise estimate 
$$|\partial_{x^i} Z^\eta Z(\phi) | \lesssim \frac{\epsilon^{1/2}}{(1+t)^2}$$ and thus, 
$$
P^\alpha_{d\eta}(\bar{\varphi}) |\partial_{x^i} Z^\eta Z(\phi) | Z^{\beta}(f)| \lesssim  \frac{\epsilon^{1/2}\left(1+\log(1+t) \right)^{d}}{(1+t)^2} \sum_{|\rho| \le N-1} |Y^\rho(\varphi)|| Z^{\beta}(f)|,
$$
where we have used the fact that there is at most one term in $P^\alpha_{d\eta}(\bar{\varphi})$ for which we do not have access to pointwise estimates, 
or we have $|\eta|+1 > N-(9/2+1)$, in wich case $k \le N-(9/2+2)$ and we can bound pointwise $P_{d \eta}(\bar{\varphi})$ as
$$
| P_{d \eta}(\bar{\varphi}) | \lesssim \left( 1+\log (1+t) \right)^{d}.
$$
$P^\alpha_{d\eta}(\bar{\varphi}) |\partial_{x^i} Z^\eta Z(\phi) | Z^{\beta}(f)|$ can then be estimated as before, using H\"older inequality, pointwise estimates on $\rho( | Z^\beta(f) | )$ and the estimate \eqref{es:lqp}.
\item Estimates on $I_2$. Using \eqref{eq:gcfm} again, we have
$$
| I_2 | \lesssim \sum_{d=0}^{|\beta|+1}\sum_{i=1}^n \sum_{\substack{|\gamma| \le |\beta|,\\ |\eta| \le |\beta|}} | P^{\beta,i}_{d \gamma \eta}(\bar{\varphi}) | \, |\partial_{x^i} Z^\gamma(\phi)| \, |Y^\eta(f)| \, |Y^\alpha(\varphi)|,
$$
where $P^{\beta,i}_{d\gamma \eta}(\bar{\varphi})$ are multilinear forms of degree $d$ and signature $k \le |\beta|-1$ satisfying $k+|\gamma|+|\eta| \le |\beta|+1$.
Since $|\beta| \le 9/2+3$, we have $|\beta|-1 \le 9/2+2 \le N-(9/2+2)$. Thus, using the bootstrap assumption \ref{bas2}, all the terms in the decomposition of each of the $P^{\beta,i}_{d\gamma \eta}(\bar{\varphi})$ can be bounded pointwise. Thus, we have 
$$
| I_2 | \lesssim \left(1+\log(1+t) \right)^{|\beta|+1}\sum_{i=1}^n \sum_{\substack{|\gamma| \le |\beta|,\\ |\eta| \le |\beta|, \\ |\gamma|+|\eta| \le |\beta|+1}} |\partial_{x^i} Z^\gamma(\phi)| \, |Y^\eta(f)| \, |Y^\alpha(\varphi)|.
$$
Now since $|\gamma| \le |\beta| \le 9/2 +3$ in the above sum, we have $|\gamma| \le N-(9/2+1)$ since $N \ge 14$ and thus, we can bound $|\partial_{x^i} Z^\gamma(\phi)|$ pointwise using the boostrap assumption \eqref{3dbasp}.
We have thus obtained
$$
||I_2(t)||_{L^1(\mathbb{R}^n_v) \times L^2(\mathbb{R}^n_x)} \lesssim \frac{\epsilon^{1/2}}{(1+t)^{1+\delta'}}\sum_{|\eta| \le |\beta|}|| Y^{\alpha}(\varphi)(t) Y^{\eta}(f)(t) ||_{L^1 \left( \mathbb{R}_x \times \mathbb{R}_v \right)},
$$
for some $\delta'> 0$.
\item Conclusions of the proof of the lemma:  \\
Let 
\begin{eqnarray*}
F_1(t)&=&\sum_{|\eta| \le |\beta|}\sum_{|\alpha| \le N-1}|| Y^{\alpha}(\varphi)(t) Y^{\eta}(f)(t) ||_{L^1 \left( \mathbb{R}_x \times \mathbb{R}_v \right)}, \\
F_2(t)&=&\sum_{|\eta| \le |\beta|}\sum_{i=1}^3 \sum_{|\alpha| = N-1}||Y_iY^{\alpha}(\varphi)(t) Y^{\eta}(f)(t) ||_{L^1 \left( \mathbb{R}_x \times \mathbb{R}_v \right)}, \\
F_3(t)&=&\sum_{|\eta| \le |\beta|}\sum_{i=1}^3 \sum_{|\alpha| = N-1}||\partial_{x^i} Y^{\alpha}(\varphi)(t) Y^{\eta}(f)(t) ||_{L^1 \left( \mathbb{R}_x \times \mathbb{R}_v \right)}, \\
F&=&F_1+F_2+F_3. 
\end{eqnarray*}
Combining all the ingredients above, We have obtained that, there exists some $\delta'> 0$ such that 
$$
F(t) \lesssim \int_{0}^t \frac{\epsilon^{1/2}}{(1+s)^{1+\delta'}}F(s) ds + C_\sigma \epsilon^{3/2} t^\sigma.$$
Applying Gronwall inequality and using the smallness of the initial data then finishes the proof.
\end{enumerate}
\end{proof}

Using \eqref{eq:Ewv}, we have similarly
\begin{proposition}\label{prop:vwpe}
 Let $\sigma> 0$. Then, there exists a $C_{\sigma}> 0$ such that for any multi-indices $\alpha, \beta$, $|\alpha| \le N-1$, $|\beta| \le 9/2 +3$ and any $\varphi \in \mathcal{M}$, we have for all $t \in [0,T]$, and all $1 \le i \le 3$,

\begin{eqnarray}
||(1+v^2)^{\frac{\delta(\delta+3)}{2(1+\delta)}} Y^{\alpha}(\varphi)(t) Y^{\beta}(f)(t) ||_{L^{1+\delta} \left( \mathbb{R}_x \times \mathbb{R}_v \right)} \le C_{\sigma} (1+t)^{\sigma} \epsilon\label{eq:pres2},\\
||(1+v^2)^{\frac{\delta(\delta+3)}{2(1+\delta)}} Y_i Y^{\alpha}(\varphi)(t) Y^{\beta}(f)(t) ||_{L^{1+\delta} \left( \mathbb{R}_x \times \mathbb{R}_v \right)} \le C_{\sigma} (1+t)^{\sigma} \epsilon\label{eq:pres2bis},\\
||(1+v^2)^{\frac{\delta(\delta+3)}{2(1+\delta)}} \partial_{x^i} Y^{\alpha}(\varphi)(t) Y^{\beta}(f)(t) ||_{L^{1+\delta} \left( \mathbb{R}_x \times \mathbb{R}_v \right)} \le C_{\sigma} (1+t)^{\sigma} \epsilon.\label{eq:pres2ter}
\end{eqnarray}
\end{proposition}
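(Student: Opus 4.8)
The plan is to mimic exactly the proof of Proposition \ref{prop:ep}, but now carrying the extra weight $(1+v^2)^{\frac{\delta(\delta+3)}{2(1+\delta)}}$ and working with $L^{1+\delta}_{x,v}$ norms in place of $L^1_{x,v}$. The starting point is again the product rule for the transport operator: writing $w=(1+v^2)^{\frac{\delta(\delta+3)}{2(1+\delta)}}$ and setting $p=1+\delta$, $q=\delta(\delta+3)$ so that $w=(1+v^2)^{q/(2p)}$, one has
$$
T_\phi\left( Y^\alpha(\varphi) Y^\beta(f) \right)= T_\phi\left( Y^\alpha(\varphi) \right) Y^\beta(f)+Y^\alpha(\varphi) T_\phi\left( Y^\beta(f) \right)=I_1+I_2,
$$
and then we invoke the $v$-weighted conservation law \eqref{eq:Ewv} applied to $g=Y^\alpha(\varphi)Y^\beta(f)$ with exponents $p=1+\delta$ and the appropriate weight $q$ chosen so that $(1+v^2)^{q/2}|g|^p = w^p |g|^p$ as it appears in the statement. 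This reduces matters to bounding, integrated in time, the two source terms
$$
\left\| w\, |Y^\alpha(\varphi)Y^\beta(f)|^{\delta}\, T_\phi\!\left(Y^\alpha(\varphi)Y^\beta(f)\right) \right\|_{L^1_{x,v}}
\quad\text{and}\quad
\left\| \,|Y^\alpha(\varphi)Y^\beta(f)|^{1+\delta}\, (1+v^2)^{(q-1)/2}\partial_x\phi \right\|_{L^1_{x,v}},
$$
the second of which is controlled by the pointwise decay of $\partial_x\phi$ from \eqref{3dbasp} exactly as the term $I_2$ in Lemma \ref{lem:cl}'s application elsewhere, leaving no growth.

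For the first source term I would repeat the splitting $I_1=I_{1,1}+I_{1,2}$ with $I_{1,1}=[T_\phi,Y^\alpha](\varphi)Y^\beta(f)$ and $I_{1,2}=Y^\alpha(T_\phi(\varphi))Y^\beta(f)$, expanding $I_{1,1}$ via the commutation formula \eqref{eq:gcfm} and $I_{1,2}$ via Lemma \ref{lem:tynzp} and the transport equations defining the coefficients in $\mathcal{M}$, precisely as in the proof of Proposition \ref{prop:ep}. The case distinctions are identical: either $|\gamma|\le N-(9/2+1)$ so that $|\partial_{x^i}Z^\gamma\phi|\lesssim \epsilon^{1/2}(1+t^2)^{-1}$ by \eqref{3dbasp}, or $|\gamma|>N-(9/2+1)$ so that the remaining indices are small enough to bound the coefficient factors pointwise via \eqref{bas2}; likewise at top order $|\alpha|=N$ one uses $Z^\alpha=t\partial_x Z^{\alpha'}$ or $\partial_x Z^{\alpha'}$ and the $L^{1+\sigma'}$-boundedness of $\partial^2_x Z^\eta\phi$ from \eqref{3dbasp2d}. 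The only genuinely new bookkeeping is that wherever Proposition \ref{prop:ep} estimates $\rho(|Z^\beta f|)$ or $\rho(|Y^\beta f|)$ in some $L^p_x$, here we must instead control the $v$-weighted quantities $\int_v w^p |Y^\beta f|^p\,dv$; but the Klainerman-Sobolev estimate of Proposition \ref{es:ksmc} applies verbatim to $w\,Y^\beta f$ (since $w$ commutes harmlessly with the modified vector fields up to $w$ itself, by the analogue of Lemma \ref{lem:czv}, and $|\beta|\le 9/2+3$ is small), and the bootstrap bound \eqref{3dbaseb} on $E_{N,\delta}[f(t)]$ controls the weighted $L^{1+\delta}$ norms directly. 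One also uses Young's inequality $w\,|ab|^{\delta}\,|T_\phi(ab)|\lesssim w(|a|^p+|b|^p) + w|T_\phi(ab)|^p$-type manipulations, or more simply Hölder with exponents $p$ and $p'=p/(p-1)$, to redistribute the weight between the factor $Y^\alpha(\varphi)$ (which has no $v$-decay but only logarithmic $t$-growth at low order) and $Y^\beta(f)$ (which carries all the weight and all the spatial decay).

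Assembling these bounds, one arrives as before at a closed Gronwall-type inequality
$$
G(t)\lesssim \int_0^t \frac{\epsilon^{1/2}}{(1+s)^{1+\delta'}}\,G(s)\,ds + C_\sigma\,\epsilon^{3/2}\,t^\sigma,
$$
where $G$ is the sum over the relevant multi-indices of the three weighted $L^{1+\delta}_{x,v}$ norms appearing in \eqref{eq:pres2}, \eqref{eq:pres2bis}, \eqref{eq:pres2ter}; Gronwall's lemma and the smallness of the data then give $G(t)\le C_\sigma(1+t)^\sigma\epsilon$, which is the claim. I expect the main obstacle to be purely organizational rather than conceptual: one must check that the exponent $\frac{\delta(\delta+3)}{2(1+\delta)}$ in the weight is consistent with the exponent $q$ forced by \eqref{eq:Ewv} when $p=1+\delta$ (i.e. that $w^p=(1+v^2)^{q/2}$ with $q=\frac{\delta(\delta+3)}{1+\delta}\cdot(1+\delta)=\delta(\delta+3)$, so that the loss-of-one-power weight $(1+v^2)^{(q-1)/2}$ in the $\partial_x\phi$ error term is still integrable in $v$ after pairing with $\rho$), and that the Hölder exponents used to peel off $\partial_{x^i}Z^\gamma\phi$ in $L^q_x$ with $q$ as in \eqref{es:lqp} leave a power of $(1+t)$ small enough to be absorbed into $(1+t)^\sigma$. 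None of these introduce any new difficulty beyond what is already handled in Proposition \ref{prop:ep}, so the proof is a routine but careful transcription.
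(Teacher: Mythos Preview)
Your proposal is correct and takes precisely the approach the paper intends: the paper's own proof consists of a single sentence stating that the argument is almost identical to that of Proposition \ref{prop:ep} (using the weighted conservation law \eqref{eq:Ewv} in place of the unweighted $L^1$ law) and is left to the reader. You have supplied the details of that transcription, including the correct identification of $q=\delta(\delta+3)$ so that $w^{1+\delta}=(1+v^2)^{q/2}$, the same $I_{1,1}/I_{1,2}/I_2$ splitting and case analysis, and the final Gronwall step.
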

\begin{proof}The proof is almost identical to the proof of the previous proposition and therefore left to the reader.
\end{proof}

Finally, we can get rid of the small growth provided with look at a product of the form $Y^\alpha \partial_x (\varphi)Y^\beta(f)$.

\begin{proposition}\label{prop:perg} For any multi-indices $\alpha, \beta$, $|\alpha| \le N-1$, $|\beta| \le 9/2 +3$ and any $\varphi \in \mathcal{M}$, we have for all $t \in [0,T]$ and all $1 \le j \le 3$,

\begin{eqnarray*}
|| Y^{\alpha}(\partial_{x^j}\varphi)(t) Y^{\beta}(f)(t) ||_{L^1 \left( \mathbb{R}_x \times \mathbb{R}_v \right)} &\lesssim&  \epsilon,\\
\end{eqnarray*}
as well as 
\begin{eqnarray*}
||(1+v^2)^{\frac{\delta(\delta+3)}{2(1+\delta)}} Y^{\alpha}(\partial_{x^j}\varphi)(t) Y^{\beta}(f)(t) ||_{L^{1+\delta} \left( \mathbb{R}_x \times \mathbb{R}_v \right)} \lesssim \epsilon.
\end{eqnarray*}
\end{proposition}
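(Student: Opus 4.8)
The plan is to mirror the proofs of Propositions~\ref{prop:ep} and~\ref{prop:vwpe}: write a transport equation for the product $Y^{\alpha}(\partial_{x^j}\varphi)\,Y^{\beta}(f)$, integrate it with the approximate conservation law of Lemma~\ref{lem:cl}, estimate the right-hand side, and close by a Gr\"onwall argument for
\[
G(t):=\sum_{|\alpha|\le N-1}\ \sum_{|\beta|\le 9/2+3}\ \sum_{j=1}^{3}\ \big\|\,Y^{\alpha}(\partial_{x^j}\varphi)(t)\,Y^{\beta}(f)(t)\,\big\|_{L^{1}(\mathbb{R}^{3}_{x}\times\mathbb{R}^{3}_{v})}.
\]
The only genuinely new input compared with Proposition~\ref{prop:ep} is that the source of the transport equation satisfied by $\partial_{x^j}\varphi$ carries one extra spatial derivative on $\phi$: since $T_\phi(\varphi)=t\,\partial_{x^k}[Z(\phi)]$ (up to the harmless $-2\phi$ correction for the scaling coefficient) and $[\partial_{x^j},T_\phi]=(\partial_{x^j}\nabla_x\phi).\nabla_v$, we have
\[
T_\phi(\partial_{x^j}\varphi)=t\,\partial_{x^j}\partial_{x^k}\big[Z(\phi)\big]-(\partial_{x^j}\nabla_x\phi).\nabla_v\varphi .
\]
Moreover, since $|\alpha|\le N-1$, applying $Y^\alpha$ to $\partial_{x^j}\varphi$ involves at most $N$ operations on $\varphi$, so no $|\alpha|=N$ decomposition of the kind used in Proposition~\ref{prop:ep} is needed here.

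First I would dispose of the low-order range $|\alpha|\le N-(9/2+3)$: by \eqref{bas3} one has $|Y^{\alpha}(\partial_{x^j}\varphi)|\lesssim\epsilon^{1/2}$ pointwise --- crucially \emph{without} the logarithmic growth of \eqref{bas2} --- while $\|Y^{\beta}(f)\|_{L^{1}_{x,v}}\le E_{N,\delta}[f]\lesssim\epsilon$, so the product already has $L^{1}$ norm $\lesssim\epsilon^{3/2}\lesssim\epsilon$. In the remaining range $N-(9/2+3)<|\alpha|\le N-1$ I would split $T_\phi\big(Y^{\alpha}(\partial_{x^j}\varphi)\,Y^{\beta}(f)\big)$ into $\big(T_\phi Y^{\alpha}(\partial_{x^j}\varphi)\big)Y^{\beta}(f)$ and $Y^{\alpha}(\partial_{x^j}\varphi)\big(T_\phi Y^{\beta}(f)\big)$. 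The second term is handled exactly as the term $I_2$ in Proposition~\ref{prop:ep}: commuting $T_\phi$ past $Y^{\beta}$ via \eqref{eq:gcfm}, all coefficients are of low order ($|\beta|\le 9/2+3$) hence controlled pointwise, and $|\partial_{x^i}Z^{\gamma}\phi|\lesssim\epsilon^{1/2}(1+t)^{-2}$ by \eqref{3dbasp}, producing a contribution $\lesssim \epsilon^{1/2}(1+\log(1+s))^{C}(1+s)^{-2}\,G(s)$ --- the Gr\"onwall term, with integrable coefficient. For the first term I would write $T_\phi Y^{\alpha}(\partial_{x^j}\varphi)=[T_\phi,Y^{\alpha}](\partial_{x^j}\varphi)+Y^{\alpha}\big(T_\phi(\partial_{x^j}\varphi)\big)$ and distribute derivatives with Lemmas~\ref{lem:gcfm}, \ref{lem:tynzp}, \ref{lem:cdxy}, running the same large/small case analysis on the multi-indices as in Proposition~\ref{prop:ep}, using Proposition~\ref{es:ksmc} for the pointwise decay of $\rho(|Y^{\beta}f|)$ and \eqref{es:lqp} for the $L^{3/2}$ bound on $\nabla Z^{\le N}\phi$, and invoking Proposition~\ref{prop:ep} (resp.\ \ref{prop:vwpe}) to control the residual $\varphi$-factor products $P(\bar\varphi)\,\nabla Z^{\le N}\phi\,Y^{\beta}(f)$, whose $(1+s)^{\sigma}$ growth is harmless against the decay of $\nabla Z^{\le N}\phi$.

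The mechanism that removes the growth is the top-order term of $Y^{\alpha}\big(t\,\partial_{x^j}\partial_{x^k}[Z(\phi)]\big)$. Since this is a function of $(t,x)$, Lemma~\ref{lem:tynzp} (writing $t\partial_{x^j}=Z'\in\Gamma$, extracting the single factor of $t$ as in Proposition~\ref{prop:ep}) produces only terms of the shape $t\,\nabla^{2}Z^{\le N}\phi$ and $t^{-1}P(\bar\varphi)\,\nabla^{2}Z^{\le N-1}\phi$ --- exactly one power of $t$, and at most $N$ derivatives on $\phi$, so \eqref{3dbasp2d} applies. Paired with $Y^{\beta}(f)$ and $\rho(|Y^{\beta}f|)$ via H\"older with $p=1+\delta'$ and large conjugate exponent, $\|\rho(|Y^{\beta}f|)\|_{L^{(1+\delta')'}}\lesssim\epsilon(1+t)^{-3+\sigma}$, this gives $\lesssim t\,\epsilon^{1/2}\,\epsilon(1+t)^{-3+\sigma}=\epsilon^{3/2}(1+t)^{-2+\sigma}$, which is integrable --- the single power of $t$ instead of the two occurring in Proposition~\ref{prop:ep} is precisely the gain. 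The term $(\partial_{x^j}\nabla_x\phi).\nabla_v\varphi$ is treated by first rewriting $\partial_{v^k}=t^{-1}Y_k-\partial_{x^k}+t^{-1}\sum_l\Phi^l_k\partial_{x^l}$ and $\partial_{x^j}\partial_{x^k}\phi=t^{-1}(t\partial_{x^j})\partial_{x^k}\phi$, which turns it, after $Y^{\alpha}$, into products already estimated in Propositions~\ref{prop:ep} and~\ref{prop:vwpe} (in particular the quantities $Y_kY^{N-1}(\varphi)Y^{\beta}(f)$ entering $F_2$ there) multiplied by a gained factor of decay of $\nabla\phi$ from \eqref{3dbasp}. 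Collecting everything yields
\[
G(t)\lesssim\int_{0}^{t}\frac{\epsilon^{1/2}\,(1+\log(1+s))^{C}}{(1+s)^{1+\delta'}}\,G(s)\,ds+C'\,\epsilon^{3/2},
\]
and Gr\"onwall plus the smallness of $\epsilon$ gives $G(t)\lesssim\epsilon$; the weighted $L^{1+\delta}$ estimate follows identically, starting from \eqref{eq:Ewv} in place of Lemma~\ref{lem:cl}, exactly as Proposition~\ref{prop:vwpe} follows from Proposition~\ref{prop:ep}.

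I expect the main obstacle to be the top-order bookkeeping: one must check, using the signature constraints of Lemmas~\ref{lem:gcfm} and~\ref{lem:tynzp}, that after $Y^{\alpha}$ hits $T_\phi(\partial_{x^j}\varphi)$ every top-order contribution genuinely reduces to $\nabla^{2}Z^{\le N}\phi$ (so that \eqref{3dbasp2d} is applicable) carrying at most one power of $t$ --- never $N+1$ derivatives on $\phi$ nor a factor $t^{2}$ --- and that whenever a high-order coefficient factor $Y^{\rho}(\varphi)$ survives, the accompanying $\phi$-derivative order is small enough to use the pointwise decay $|\nabla^{2}Z^{\le m}\phi|\lesssim\epsilon^{1/2}t^{-1}(1+t)^{-2}$ (obtained from \eqref{3dbasp} by writing one $\partial_x$ as $t^{-1}(t\partial_x)$), so that the leftover product $Y^{\rho}(\varphi)Y^{\beta}(f)$ falls within the hypotheses $|\rho|\le N-1$, $|\beta|\le 9/2+3$ of Propositions~\ref{prop:ep} and~\ref{prop:vwpe}. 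Once this is verified, the rest is a routine, if lengthy, repetition of the estimates of those two propositions.
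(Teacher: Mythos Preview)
Your proposal is correct and follows essentially the same approach as the paper. The paper's own proof is much terser: it simply says to rerun the argument of Proposition~\ref{prop:ep} and observes that the single term responsible for the $t^{\sigma}$ growth there, namely $t\,Z^{\alpha}\partial_{x^i}Z(\phi)$, is now replaced by $t\,Z^{\alpha}\partial_{x^i}\partial_{x^j}Z(\phi)$, which after commutation is bounded by $t\sum_{|\eta|\le N}|\partial_x^2 Z^{\eta}\phi|$ and hence, via \eqref{3dbasp2d} and H\"older, contributes an integrable $(1+t)^{-2+\sigma}$ instead of $(1+t)^{-1+\sigma}$. You identify exactly this mechanism; you are simply more explicit than the paper in setting up the Gr\"onwall functional, isolating the low-order range via \eqref{bas3}, and writing out the commutator contribution $(\partial_{x^j}\nabla_x\phi)\cdot\nabla_v\varphi$ in $T_\phi(\partial_{x^j}\varphi)$, which the paper absorbs into the phrase ``the previous arguments still apply''.
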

\begin{proof}
First note that the previous arguments used in the proof of Proposition \ref{prop:ep} still apply and that we may only focus on the terms leading to the $t^\sigma$ growth in the proof of Proposition \ref{prop:ep}, which were contained in the error term $I_{1,2}$. More precisely, the term leading to the $t^\sigma$ growth is the first one on the right-hand side of \eqref{eq:inter}, i.e. the term $t Z^\alpha \partial_{x^i} Z(\phi)$. In our case, this term should be replaced by 

$$t Z^\alpha \partial_{x^i} \partial_{x^j}Z(\phi),$$ 

with $|\alpha| \le N-1$. 
Commuting the $\partial_{x}$ and $Z^\alpha$, we have 
$$
|t Z^\alpha \partial_{x^i} \partial_{x^j}Z(\phi)| \le t \sum_{|\eta| \le N} |\partial_x^2 Z^\eta \phi|.
$$



We can then repeat the previous arguments (i.e. use H\"older inequality and the bootstrap assumption \eqref{3dbasp2d}), except that we have gained a $1/t$ factor. This gain now means that the resulting error will decay like $1/t^{2-\sigma}$, which is integrable in $t$ and therefore does not lead to any growth. The $L^{1+\delta}_{x,v}$ weighted estimates can be treated similarly. 
\end{proof}

\subsection{Improving the bootstrap assumptions}
We are now in a position to improve each of the boostrap assumptions. 
\subsubsection{Improving the estimates on $Z^\alpha(\phi)$}
Similarly to Lemma \ref{lem:zpes}, we can improve assumption \eqref{3dbasp2d} to
\begin{lemma}
For all $0 < \delta' \le \delta$, there exists a $C_{\delta'} > 0$ such that for all multi-index $\alpha$ with $|\alpha| \le N$, 
\begin{equation}
||\nabla^2 Z^\alpha \phi (t) ||_{L^{1+\delta'}} \le C_{\delta'} \epsilon.
\end{equation}


Applying the Gagliardo-Nirenberg inequality, we deduce that for all $t\in [0,T]$,
$$
||\nabla Z^\alpha \phi(t) ||_{L^q(\mathbb{R}^n)} \lesssim  \epsilon,  $$ 
for all $3/2 < q < \frac{3(1+\delta)}{2-\delta}$. 
\end{lemma}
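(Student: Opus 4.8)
The statement to prove is the three-dimensional improvement of Lemma \ref{lem:zpes}: that under the bootstrap assumptions \eqref{3dbaseb}--\eqref{bas3}, the $L^{1+\delta'}$ norms of $\nabla^2 Z^\alpha\phi$ are in fact controlled by $\epsilon$ (not merely $\epsilon^{1/2}$) for all $|\alpha|\le N$. The plan is to proceed exactly as in the proof of Lemma \ref{lem:zpes}: start from the commuted Poisson equation for $Z^\alpha\phi$ and apply the Calder\'on--Zygmund inequality to get $\|\nabla^2 Z^\alpha\phi\|_{L^{1+\delta'}}\lesssim \|Z^\alpha(\rho(f))\|_{L^{1+\delta'}}$. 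The new feature in $n=3$ is that $Z^\alpha\rho(f)$ must be re-expressed using the modified-vector-field commutation formula \eqref{eq:crz} of Lemma \ref{lem:pcm} rather than the naive Lemma \ref{lem:vac}, so that
\[
Z^\alpha(\rho(f))=\sum_{j,d,\beta}\frac{1}{t^j}\rho\!\left(P^{\alpha,j}_{d\beta}(\bar\varphi)\,Y^\beta f\right)+\sum_{d,\beta}\rho\!\left(Q^\alpha_{d\beta}(\partial_x\bar\varphi)\,Y^\beta f\right).
\]
One then takes the $L^{1+\delta'}$ norm in $x$ of the right-hand side and estimates each piece.

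The key steps, in order: (i) For each term, bound the velocity integral by H\"older in $v$ against the weight $(1+|v|^2)^{(\delta'+3)/2}$, exactly as in the proof of Lemma \ref{lem:zpes}, exploiting that $1/(1+|v|^2)^{(\delta'+3)/2}\in L^1_v$ and that the weight $(1+v^2)^{\frac{\delta'(\delta'+3)}{2(1+\delta')}}$ appearing in $E_{N,\delta'}$ is exactly what is needed so that $p/q=\delta'$; this reduces matters to $L^{1+\delta'}_{x,v}$ bounds on products $P(\bar\varphi)Y^\beta f$ or $Q(\partial_x\bar\varphi)Y^\beta f$ (times a weight). (ii) The top-order danger is when $|\beta|$ is large (so no pointwise decay on $\rho(|Y^\beta f|)$ is available) while simultaneously the coefficient factor $P^{\alpha,j}_{d\beta}(\bar\varphi)$ contains a high-order $Y^\rho(\varphi)$: here the signature constraints $k\le|\alpha|-1$, $k+|\beta|\le|\alpha|$ of Lemma \ref{lem:pcm} guarantee that at most one factor $Y^{\rho_\ell}(\varphi)$ sits above the pointwise-estimate threshold $N-(9/2+2)$, and for that single factor together with $Y^\beta f$ one invokes Proposition \ref{prop:vwpe} (the weighted $L^{1+\delta}$ product estimate) to get a bound $C_\sigma(1+t)^\sigma\epsilon$; all remaining coefficient factors are bounded pointwise by $\epsilon^{d/2}(1+\log(1+t))^d$ from \eqref{bas2}, and the loose $(1+t)^\sigma$ is acceptable because we only need a \emph{bound} here, not integrability. (iii) For the $Q(\partial_x\bar\varphi)$ terms one uses instead Proposition \ref{prop:perg}, which gives the clean $\lesssim\epsilon$ product bound with no growth, together with the pointwise bound \eqref{bas3} on the lower-order $\partial_x Y^\rho(\varphi)$ factors. (iv) For the genuinely low-order terms ($|\beta|$ small) one has the pointwise decay of $\rho(|Y^\beta f|)$ from Proposition \ref{es:ksmc}, which more than suffices. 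Collecting: each term is $\lesssim C\epsilon$ (the $1/t^j$ prefactors only help, and the stray $\log$/$(1+t)^\sigma$ factors are harmless after choosing constants), which upgrades $\epsilon^{1/2}$ to $\epsilon$. Finally, the Gagliardo--Nirenberg inequality $\|\nabla Z^\alpha\phi\|_{L^q}\lesssim\|\nabla^2 Z^\alpha\phi\|_{L^{1+\delta'}}$ with $1/q=1/(1+\delta')-1/3$ yields the stated $L^q$ bound for every $3/2<q<\frac{3(1+\delta)}{2-\delta}$.

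The main obstacle is bookkeeping at the top order $|\alpha|=N$: one must check that the combination of the signature inequalities in Lemma \ref{lem:pcm} with the regularity thresholds ($N\ge14$, pointwise bounds available up to order $N-(9/2+2)$ or $N-(9/2+3)$) really does leave at most one "bad" high-order coefficient factor alongside at most one "bad" high-order piece of $f$, so that Propositions \ref{prop:vwpe} and \ref{prop:perg} apply verbatim. This is the same mechanism already used in the proof of Proposition \ref{prop:ep}, so no new idea is needed — only care that the indices in the $Q$-terms (where $d+k'+|\beta|\le|\alpha|$ is stronger) and in the $P$-terms are correctly tracked, and that the weight exponents match the definition \eqref{eq:tn} of $E_{N,\delta}$ for the relevant $\delta'\le\delta$. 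The $1/t^j$ factors with $j\ge1$ are never a problem since they only improve decay; the only mild care is near $t=0$, where one uses the alternative non-modified commutation (Lemma \ref{lem:vac}) or simply the local-in-time smallness of all quantities.
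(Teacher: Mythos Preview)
Your overall strategy matches the paper's proof: use the commuted Poisson equation \eqref{eq:cdz} (equivalently Calder\'on--Zygmund plus \eqref{eq:crz}), reduce via the weighted H\"older-in-$v$ trick to $L^{1+\delta'}_{x,v}$ bounds on the products $P(\bar\varphi)Y^\beta f$ and $Q(\partial_x\bar\varphi)Y^\beta f$, split according to whether the high factor falls on a coefficient or on $f$, invoke Propositions \ref{prop:vwpe} and \ref{prop:perg} for the former, and absorb any $(1+t)^\sigma$ or $\log$ growth with the $1/t^j$ prefactors in the $P$-terms (and use the growth-free Proposition \ref{prop:perg} for the $Q$-terms, where there is no such prefactor).

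There is, however, one genuine gap at the top order. You write that the signature constraint for the $P$-forms is $k\le|\alpha|-1$; in fact Lemma \ref{lem:pcm} only gives $k\le|\alpha|$ for the $P^{\alpha,j}_{d\beta}$ (the bound $k'\le|\alpha|-1$ applies to the $Q$-forms). Hence at $|\alpha|=N$ the $P$-form can contain a factor $Y^\rho(\varphi)$ with $|\rho|=N$, and Proposition \ref{prop:vwpe} as stated only covers $|\rho|\le N-1$ together with the two structured order-$N$ cases $Y_iY^{\rho'}(\varphi)$ and $\partial_{x^i}Y^{\rho'}(\varphi)$, $|\rho'|=N-1$. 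The paper closes this gap precisely by invoking Lemma \ref{lem:pcmto}, which shows that the only top-order contributions to $P^{\alpha,j}_{d\beta}(\bar\varphi)$ at $|\alpha|=N$ are of these two structured types, so that Propositions \ref{prop:ep} and \ref{prop:vwpe} do apply verbatim. You need to add this step; without it the claim that ``Propositions \ref{prop:vwpe} and \ref{prop:perg} apply verbatim'' is not justified at $|\alpha|=N$.
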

\begin{proof}
Using the commuted equation \eqref{eq:cdz} for $Z^\alpha \phi$, we have
$$
|| \Delta Z^\alpha(\phi)||_{L^{1+\delta'}} \lesssim J_1 +J_2
$$
where 
\begin{eqnarray*}
J_1=\sum_{j=1}^{|\alpha|} \sum_{d=1}^{|\alpha|+1} \sum_{|\beta| \le |\alpha | } \frac{1}{t^j} ||  \rho\left( P^{\alpha,j}_{d\beta}(\bar{\varphi}) Y^\beta(g) \right)||_{L^{1+\delta'}},
\end{eqnarray*}
where the $P^{\alpha,j}_{d\beta}(\bar{\varphi})$ are multilinear forms of degree $d$ and signature less than $k$ satisfying 
$$k \le |\alpha|, \quad k+|\beta| \le |\alpha|,$$ and, in view of Lemma \ref{lem:pcmto}, such that when $|\alpha|=N$ the only top order terms in $P^{\alpha,j}_{d\beta}(\bar{\varphi})$ are of the form $Y_i Y^\beta(\varphi)$ or $\partial_{x^i} Y^\beta(\varphi)$ with $|\beta|=N-1$, so that we can apply Propositions \ref{prop:ep} and \ref{prop:vwpe},\\

and where
$$
J_2= \sum_{d=0}^{|\alpha|}\sum_{|\beta| \le |\alpha | } || \rho\left( Q^{\alpha}_{d\beta}(\partial_x \bar{\varphi}) Y^\beta(g) \right)||_{L^{1+\delta'}},
$$
where the $Q^{\alpha}_{d\beta}(\partial_x\bar{\varphi})$ are multilinear forms of degree $d$ of the form \eqref{eq:qmf} and signature less than $k'$ satisfying

$$k' \le |\alpha|-1, \quad k'+|\beta| \le |\alpha|.$$ 
Following the strategy of the proof of Lemma \ref{lem:zpes}, we see that it is sufficient to prove $L^p_x$ bounds on $\rho(J_1)$ and $\rho(J_2)$. With this in mind, recall that, for $i=1,2$,
$$
||\rho(J_i) ||_{L^p_x}=\left|\left| \int_v J_i dv\right|\right|_{L^p_x} \lesssim \left( \int_{v} \chi(v)^{-1} dv \right)^{1/q} \left|\left|  \chi(v)^{1/q} J_i \right|\right|_{L^p_{x,v}},
$$
for any weight function $\chi(v)$. Choosing $\chi(v)$, $p$ and $q$ as in the proof of \ref{lem:zpes}, we only need to prove the $v$-weighted $L^p_{x,v}$ bounds for $J_1$ and $J_2$. 
\begin{enumerate}
\item Estimates on $J_1$. Since $k \le |\alpha|$, there can be at most one term in the decomposition of each of the $P^{\alpha,j}_{d\beta}$ for which we do not have access to pointwise estimates. Thus, we have 
$$
| P^{\alpha,j}_{d\beta}(\bar{\varphi}) Y^\beta(g) | \le (1+(\log(1+t))^{d-1} \sum_{|\eta|+|\beta| \le |\alpha|}\sum_{\varphi \in \gamma_m} |Y^{\eta}(\varphi)| \, |Y^\beta(g) |.
$$

Now in the above sum, either $|\eta| > N-(9/2+2)$, in which case $|\beta| \le 9/2+3$ and we apply the product estimates of Proposition \ref{prop:vwpe}, or $|\eta| \le N-(9/2+2)$, in which case we can still estimate $|Y^{\eta}(\varphi)|$ pointwise. In this case, we use the $v$-weighted bounds on $Y^\beta(f)$ contain in the norm $E_N$, see \eqref{eq:tn}. In both case, we can absorb the $t$-growth thanks to the $t$ weights in the definition of $J_1$.
\item Estimates on $J_2$. These are obtained similarly, using Proposition \ref{prop:perg} instead of \ref{prop:vwpe}, since there is no $t$ weight in $J_2$ to absorb any $t$ growth. 




\end{enumerate}

\end{proof}
The preceding lemma improves the boostrap assumption \eqref{3dbasp2d}. 


Similarly to Lemma \ref{lem:l2dp} and Corollary \ref{lem:dgpp}, the pointwise estimates on $\rho\left( |Y^\beta(f)| \right)$ can be transformed into pointwise estimates for $\nabla Z^\alpha \phi$. 
\begin{lemma} For all multi-index $\alpha$ such that $|\alpha| \le N-3$
$$
||\nabla Z^\alpha \phi ||_{L^2(\mathbb{R}^n)} \lesssim  \frac{\epsilon}{t^{1/2}}, $$
and for any multi-index $|\alpha| \le N-(9/2+1)$ and $Z^\alpha \in \Gamma_{s}^{|\alpha|}$, we have for all $t \in [0,T],$
$$
\left| \nabla Z^\alpha \phi \right| \lesssim \frac{\epsilon}{(1+|x|+t)^{3/2}t^{1/2}}.
$$
\end{lemma}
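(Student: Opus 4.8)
The plan is to repeat, in the three-dimensional modified setting, the argument of Lemma~\ref{lem:l2dp} and Corollary~\ref{lem:dgpp}, with two substitutions: the clean identity $Z^\alpha\rho(f)=\sum_{|\beta|\le|\alpha|}c_\beta\rho(Z^\beta f)$ of Lemma~\ref{lem:vac} is replaced by the modified commutation formula \eqref{eq:cdz} of Lemma~\ref{lem:pcm}, and the pointwise control of $\rho(Z^\beta f)$ is replaced by Proposition~\ref{es:ksmc} together with the product estimates of Propositions~\ref{prop:ep}, \ref{prop:vwpe} and \ref{prop:perg}. The second (pointwise) statement is then deduced from the first exactly as Corollary~\ref{lem:dgpp} was deduced from Lemma~\ref{lem:l2dp}, via the $L^2$-based Klainerman--Sobolev inequality of Lemma~\ref{lem:ksmp}.

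For the $L^2$ estimate, fix $|\alpha|\le N-3$, multiply the commuted Poisson equation \eqref{eq:cdz} (applied with $g=f$) by $Z^\alpha\phi$, integrate by parts over $\mathbb{R}^3_x$, and use the Gagliardo--Nirenberg inequality $\|Z^\alpha\phi\|_{L^{6}}\lesssim\|\nabla Z^\alpha\phi\|_{L^2}$ as in Lemma~\ref{lem:l2dp}; this reduces the claim to the bound $\|\Delta Z^\alpha\phi\|_{L^{6/5}(\mathbb{R}^3)}\lesssim \epsilon\, t^{-1/2}$ for the right-hand side of \eqref{eq:cdz}. I would split that right-hand side according to whether all the coefficient factors $Y^{\rho_j}(\varphi)$ appearing in $\tilde P^{\alpha,j}_{d\beta}(\bar\varphi)$, resp.\ the factors $Y^{\rho_j}(\partial_x\varphi)$ in $\tilde Q^{\alpha}_{d\beta}(\partial_x\bar\varphi)$, have order $\le N-(9/2+2)$, resp.\ $\le N-(9/2+3)$. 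Since $N\ge14$ and the signature and order satisfy $k+|\beta|\le|\alpha|\le N-3$, in the opposite case there is at most one higher-order factor, of order $\le N-1$, and then $|\beta|\le 9/2+3$, so the relevant product $Y^{\rho_j}(\varphi)Y^\beta(f)$ (resp.\ $Y^{\rho_j}(\partial_x\varphi)Y^\beta(f)$) is controlled by Proposition~\ref{prop:vwpe} (resp.\ Proposition~\ref{prop:perg})---the $t^\sigma$ growth of the former being absorbed by the weight $t^{-j}$ with $j\ge1$ present in the $\tilde P$-sum, and Proposition~\ref{prop:perg} producing no growth in the $\tilde Q$-sum. In the remaining, ``low-order'' case, all coefficients are bounded pointwise by the bootstrap assumptions \eqref{bas2}--\eqref{bas3} (up to a power of $1+\log(1+t)$, again beaten by $t^{-j}$), $\rho(|Y^\beta f|)$ is bounded by $\epsilon(1+t+|x|)^{-3}$ from Proposition~\ref{es:ksmc} and \eqref{3dbaseb}, and $\|\,(1+t+|x|)^{-3}\,\|_{L^{6/5}_x}\lesssim t^{-1/2}$ via the same integral $\int_{\mathbb{R}^3}(1+t+|x|)^{-18/5}dx\lesssim t^{-3/5}$ used in Lemma~\ref{lem:l2dp}; wherever Proposition~\ref{prop:vwpe} is invoked, the resulting $v$-weighted $L^{1+\delta}_{x,v}$ bound is converted into an $x$-norm of the velocity average by the H\"older-in-$v$ trick of Lemma~\ref{lem:zpes}, the weight exponent $\tfrac{\delta(\delta+3)}{2(1+\delta)}$ being chosen precisely so that $(1+|v|^2)^{-\frac{\delta(\delta+3)}{2(1+\delta)}}\in L^{(1+\delta)/\delta}_v$.

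For the pointwise estimate, take $|\alpha|\le N-(9/2+1)$ with $Z^\alpha\in\Gamma_s^{|\alpha|}$ and apply Lemma~\ref{lem:ksmp} to each component of $\nabla Z^\alpha\phi$. Since $(n+2)/2=5/2$ and, for $Z^\beta\in\Gamma_s$, $[\partial_{x^k},Z^\beta]$ is a linear combination of translations composed with lower-order elements of $\Gamma_s$, one gets $|\nabla Z^\alpha\phi(t,x)|\lesssim (1+t+|x|)^{-3/2}\sum_{|\beta|\le2}\|\nabla Z^\beta Z^\alpha\phi\|_{L^2(\mathbb{R}^3)}$ with $|\beta|+|\alpha|\le 2+(N-11/2)=N-7/2\le N-3$, so the first part applies and yields the claimed bound $\epsilon(1+|x|+t)^{-3/2}t^{-1/2}$.

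The main obstacle is the bookkeeping in the second step above: one must sort, term by term in \eqref{eq:cdz}, which coefficients $Y^{\rho_j}(\varphi)$ and which products $Y^{\rho_j}(\varphi)Y^\beta(f)$ admit pointwise control and which only admit the integral control of Propositions~\ref{prop:ep}--\ref{prop:perg}, and then verify in every case that the combination of the $t$-weights $t^{-j}$, the logarithmic and $t^\sigma$ growths, the spatial decay of $\rho(|Y^\beta f|)$, and the H\"older-in-$v$ loss conspire to produce exactly the rate $t^{-1/2}$ with \emph{no} logarithmic loss. It is precisely this balancing that dictates the threshold $|\alpha|\le N-3$ and that uses $N\ge14$ to ensure at most one high-order coefficient factor in each multilinear form.
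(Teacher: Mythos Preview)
Your overall plan---repeat the argument of Lemma~\ref{lem:l2dp} and Corollary~\ref{lem:dgpp} with the modified commutation formula \eqref{eq:cdz} and the bootstrap bounds---is exactly right, and your handling of the low-order coefficient terms (all $|\rho_j|\le N-(9/2+2)$, resp.\ $\le N-(9/2+3)$) as well as the deduction of the pointwise bound from the $L^2$ bound via Lemma~\ref{lem:ksmp} are correct and match the paper.

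There is, however, a genuine gap in how you treat the high-order coefficient terms. You propose to control $\rho\big(Y^{\rho_j}(\varphi)Y^\beta(f)\big)$ (resp.\ $\rho\big(Y^{\rho_j}(\partial_x\varphi)Y^\beta(f)\big)$) by invoking Proposition~\ref{prop:vwpe} (resp.\ Proposition~\ref{prop:perg}) and then converting the $v$-weighted $L^{1+\delta}_{x,v}$ bound into an $x$-norm via the H\"older trick of Lemma~\ref{lem:zpes}. But that trick only produces $\|\rho(\cdot)\|_{L^{1+\delta}_x}$, whereas the energy identity you set up forces you to control $\|\Delta Z^\alpha\phi\|_{L^{6/5}_x}$. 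Since in dimension $3$ one has $\delta<\frac{n-2}{n+2}=\frac{1}{5}$, so $1+\delta<\frac{6}{5}$, and there is no auxiliary $L^\infty_x$ bound available to interpolate, this step does not close. The $t^{-j}$ weights absorb the $t^\sigma$ growth but do nothing about the integrability gap in $x$.

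The paper closes this gap differently: it applies the modified Klainerman--Sobolev inequality of Proposition~\ref{es:ksmc} \emph{directly to the product} $Y^\rho(\varphi)f$ (resp.\ $Y^\rho(\partial_x\varphi)f$). This yields a pointwise bound
\[
\Big|\rho\big(Y^\rho(\varphi)f\big)(t,x)\Big|\lesssim\frac{1}{(1+t+|x|)^3}\sum_{|\gamma|\le 3}\big\|Y^\gamma\big(Y^\rho(\varphi)f\big)\big\|_{L^1_{x,v}},
\]
and the $L^1_{x,v}$ norms on the right are then bounded by Proposition~\ref{prop:ep} (resp.\ Proposition~\ref{prop:perg}), giving $\lesssim(1+t)^\sigma\epsilon$ (resp.\ $\lesssim\epsilon$). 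With this pointwise decay in hand, the $L^{6/5}_x$ norm is computed exactly as in Lemma~\ref{lem:l2dp}, and the $t^{-j}$ weight in the $\tilde P$-sum (resp.\ the absence of growth in the $\tilde Q$-sum) yields the required $t^{-1/2}$. So the missing ingredient in your argument is that for the high-order product terms one should use the $L^1$ product estimates of Propositions~\ref{prop:ep} and~\ref{prop:perg} fed into Proposition~\ref{es:ksmc}, rather than the $L^{1+\delta}$ product estimate of Proposition~\ref{prop:vwpe}.
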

\begin{proof}
We use again the commutation formula \eqref{eq:cdz}. Apart from the top order terms, we can estimate all the quantities on the right-hand side using the pointwise estimates \eqref{bas2} on $Y^\rho(\varphi)$ for $|\rho| \le N-(9/2+2)$, \eqref{bas3} on $Y^\rho(\partial \varphi)$ for $|\rho| \le N-(9/2+3)$ and Proposition \ref{es:ksmc}. Once we have access to pointwise estimates, we can just follow the strategy of the proof of \ref{lem:l2dp}. Thus, the only difficult term are thos containing top order terms in $Y^\rho(\varphi)$. Those coming from the $P$ multilinear forms are of the type
$$
\frac{1}{t} \rho\left (Y^\rho(\varphi) f \right)
$$
with $|\rho|$ the largest integer such that $|\rho| \le N-(9/2+1)$, i.e. $|\rho|=N-6$. Now, since $|\rho| \le N-6$, we can consider $Y^\beta (Y^\rho(\varphi) f)$ for $|\beta| \le 3$. Thus, we may apply Proposition \ref{es:ksmc} directly to the product $Y^\rho(\varphi) f$ and we find 
$$
\left| \frac{1}{t} \rho\left (Y^\rho(\varphi) f \right) \right| \lesssim \frac{\epsilon}{ t^{1-\sigma}\left(1+t+|x|\right)^3},
$$
where we have used Proposition \ref{prop:ep} to bound the norms appearing on the right-hand side after applications of Proposition \ref{es:ksmc}. Choosing $\sigma < 1$, these terms therefore decay better than what is needed for the statement of the Lemma.

The top order terms coming from the $Q$ forms are of type
$$
\rho\left (Y^\rho(\partial_{x^i} \varphi) f \right),
$$
with $|\rho| \le N-(9/2+2)$, i.e. $|\rho| \le N-5$. We can then proceed similarly. Since we have no extra $t$ decay in front of the $Q$ forms, it is important not to lose any $t$ decay here and thus we use the improvements of Proposition \ref{prop:perg}. 
The rest of the proof is identical to that of Lemma \ref{lem:l2dp} and therefore omitted.
\end{proof}
The preceeding lemma improves \eqref{3dbasp}. To improve \eqref{3dbaspy} is then suffices to use Lemma \ref{lem:tynzp} as well as the pointwise bounds on $Y^{\alpha}(\varphi)$ \eqref{bas2}. Thus, it remains only to improve \eqref{3dbaseb}, \eqref{bas2} and \eqref{bas3}. 

\subsubsection{Improving the global bounds}
We have
\begin{lemma}For all $t \in [0,T]$,
$$
E_{N,\delta}[f(t)] \le 3/2 \epsilon,
$$
which improves \eqref{3dbaseb}.
\end{lemma}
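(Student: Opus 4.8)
The plan is to mimic the energy argument of Section~\ref{se:igb} for the $n\ge 4$ case, but replacing the crude commutation relation \eqref{eq:tlos} by the improved one of Lemma~\ref{lem:gcfm} and feeding in the technical estimates established in the preceding subsections.

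Fix a multi-index $\alpha$ with $|\alpha|\le N$. Applying Lemma~\ref{lem:cl} to $Y^\alpha f$ and using that $T_\phi(f)=0$, so that $T_\phi(Y^\alpha f)=[T_\phi,Y^\alpha]f$, it suffices to control $\int_0^t\|[T_\phi,Y^\alpha]f(s)\|_{L^1(\mathbb{R}^3_x\times\mathbb{R}^3_v)}\,ds$, and, for the weighted part of $E_{N,\delta}$, the corresponding weighted integrals arising from \eqref{eq:Ewv}. By Lemma~\ref{lem:gcfm},
$$
[T_\phi,Y^\alpha]f=\sum_{d=0}^{|\alpha|+1}\sum_{i=1}^3\sum_{\substack{|\gamma|\le|\alpha|,\ |\beta|\le|\alpha|}}P^{\alpha,i}_{d\gamma\beta}(\bar\varphi)\,\partial_{x^i}Z^\gamma(\phi)\,Y^\beta(f),
$$
where $P^{\alpha,i}_{d\gamma\beta}$ has degree $d$ and signature $k$ with $k\le|\alpha|-1$ and $k+|\gamma|+|\beta|\le|\alpha|+1$. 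The essential point, in contrast with \eqref{eq:tlos}, is that \emph{no factor of $t$} appears here, precisely because the modified vector fields were built to avoid $v$-derivatives in the commutator; hence it is enough to obtain an integrable-in-$s$ bound with integral $\lesssim\epsilon^{3/2}$.

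To estimate a generic term, write $P^{\alpha,i}_{d\gamma\beta}$ as a product of factors $Y^{\rho_j}(\varphi)$ with $\sum_j|\rho_j|=k\le|\alpha|-1$; since $N\ge 14$, all but at most one of these have order $\le N-(9/2+2)$ and are bounded pointwise by $\epsilon^{1/2}|1+\log(1+t)|$ via \eqref{bas2}. There remain three quantities: the possibly large factor $Y^{\rho}(\varphi)$, the term $\partial_{x^i}Z^\gamma(\phi)$, and $Y^\beta(f)$. If $|\gamma|\le N-(9/2+1)$ we use the pointwise decay $|\partial_{x^i}Z^\gamma\phi|\lesssim\epsilon^{1/2}(1+t)^{-2}$ of \eqref{3dbasp}; when moreover $|\beta|\le 9/2+3$, Proposition~\ref{prop:ep} bounds $\|Y^{\rho}(\varphi)Y^\beta(f)\|_{L^1_{x,v}}$ by $C_\sigma(1+t)^\sigma\epsilon$, and when $|\beta|>9/2+3$ the budget $k+|\gamma|+|\beta|\le|\alpha|+1\le N+1$ forces $|\gamma|$ and $k$ to be small enough to estimate both $\partial_{x^i}Z^\gamma\phi$ and $Y^{\rho}(\varphi)$ pointwise, after which $\|Y^\beta f\|_{L^1_{x,v}}\le E_{N,\delta}[f(t)]\le 2\epsilon$ by \eqref{3dbaseb}; either way the term is $\lesssim\epsilon^{3/2}(1+t)^{-2+\sigma}(\log(1+t))^{d}$. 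If instead $|\gamma|>N-(9/2+1)$, then $k+|\beta|\le N+1-|\gamma|\le 9/2+2$, so $P^{\alpha,i}_{d\gamma\beta}$ is bounded pointwise, $|\beta|\le 9/2+3$ gives the pointwise estimate on $\rho(|Y^\beta f|)$ through Proposition~\ref{es:ksmc}, and H\"older's inequality with $1/p+1/q=1$, $q$ close to $3/2$, combined with \eqref{es:lqp} and $\|\rho(|Y^\beta f|)\|_{L^p_x}\lesssim\epsilon(1+t)^{-3/q}$, again yields an integrable bound $\lesssim\epsilon^{3/2}(1+t)^{-2+\sigma}(\log(1+t))^{d}$. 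Choosing $\sigma<1$ makes all contributions integrable, so $\|Y^\alpha f(t)\|_{L^1_{x,v}}\le\|Y^\alpha f(0)\|_{L^1_{x,v}}+C_N\epsilon^{3/2}$. The weighted $L^{1+\delta}$ norms are handled identically from \eqref{eq:Ewv} with $p=1+\delta$, $q=\delta(\delta+3)$: the error term involving $T_\phi(Y^\alpha f)$ is treated as above, using Young's inequality to dispose of the product $|Y^\beta f|\,|Y^\alpha f|^{p-1}$, the weighted product estimates of Proposition~\ref{prop:vwpe}, and the weighted pointwise bound \eqref{eq:perz2}, while the error term involving $\partial_x\phi$ is controlled directly by \eqref{3dbasp}.

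Summing over $|\alpha|\le N$ and over the two families of norms gives $E_{N,\delta}[f(t)]\le E_{N,\delta}[f(0)]+C_N\epsilon^{3/2}\le\epsilon+C_N\epsilon^{3/2}\le\frac32\epsilon$ once $\epsilon_0$ is chosen small enough, which improves \eqref{3dbaseb}. The main obstacle is the top-order case $|\alpha|=N$: there the coefficient factors $Y^{\rho}(\varphi)$ may have order $N-1$, for which no pointwise bound is available, forcing the use of the product estimates of Propositions~\ref{prop:ep} and \ref{prop:vwpe}; making this legitimate requires checking, via the signature budget $k+|\gamma|+|\beta|\le|\alpha|+1$ together with $N\ge 14$, that a product estimate is only ever needed with a companion index $|\beta|\le 9/2+3$, and that the mild polynomial growth $(1+t)^\sigma$ it carries is always dominated by the $(1+t)^{-2}$ decay furnished either by $\partial_{x^i}Z^\gamma\phi$ or by $\rho(|Y^\beta f|)$.
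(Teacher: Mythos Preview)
Your argument is correct and follows essentially the same approach as the paper: use Lemma~\ref{lem:cl}, expand the commutator via Lemma~\ref{lem:gcfm}, observe that at most one factor $Y^{\rho_j}(\varphi)$ in $P^{\alpha,i}_{d\gamma\beta}$ can escape the pointwise bound \eqref{bas2}, and then split according to which of the three remaining factors is large. The paper organizes the case analysis slightly differently (splitting first on whether the signature $k$ exceeds $N-(9/2+2)$ rather than on $|\gamma|$), but the estimates invoked---\eqref{3dbasp}, \eqref{es:lqp}, Proposition~\ref{es:ksmc}, and the product estimates of Propositions~\ref{prop:ep} and~\ref{prop:vwpe}---are the same, and the weighted $L^{1+\delta}$ part is treated in both cases by the obvious parallel argument.
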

\begin{proof}
Using the commutation formula \eqref{eq:gcfm}, we have 
$$
[T_\phi,Y^\alpha ](f)= \sum_{d=0}^{|\alpha|+1}\sum_{i=1}^n \sum_{\substack{|\gamma| \le |\alpha|,\\ |\beta| \le |\alpha|}} P^{\alpha,i}_{d \gamma \beta}(\bar{\varphi}) \partial_{x^i} Z^\gamma(\phi) Y^\beta(f),
$$
where the $P^{\alpha,i}_{d \gamma \beta}(\bar{\varphi})$ are multilinear forms of degree $d$ and signature less than $k$ such that $k \le |\alpha|-1 \le N-1$ and $k+|\gamma|+|\beta| \le |\alpha|+1 \le N+1$.
When $k \le N- (9/2+2)$, we can bound $P^{\alpha,i}_{d \gamma \beta}(\bar{\varphi})$ by a polynomial power of $1+\log(1+t)$ and we can treat the other terms as in Section \ref{se:il1zaf}.
Otherwise $k > N- (9/2+2)$ and thus $|\beta| \le 9/2+3$ and we can use the estimates on products of Proposition \ref{prop:ep}. As in Section \ref{se:igb}, it follows that all terms are integrable in $L^1\left(\mathbb{R}_t; L^1( \mathbb{R}^n_x \times \mathbb{R}_v^n )\right)$ and the lemma follows from Lemma \ref{lem:cl}.
\end{proof}

\subsubsection{Improving the pointwise bounds on $Y^\alpha(\varphi)$ and $Y^\alpha(\partial_{x^i} \varphi)$}
Finally, we conclude the proof of the $3$d case by improving \eqref{bas2} and \eqref{bas3}.
\begin{lemma}For all $\varphi \in \mathcal{M}$ and for all $t \in [0,T]$,
\begin{enumerate}
\item for all multi-index $\alpha$ with $|\alpha|\le N-(9/2+2)$,
\begin{equation} \label{ibas2}
\left| Y^\alpha( \varphi)\right| \lesssim \epsilon \left(1+\log(1+t) \right),
\end{equation}
\item for all multi-index $\alpha$ with $|\alpha|\le N-(9/2+3)$ and all $1 \le i \le n$,
\begin{equation} \label{ibas3}
\left| Y^\alpha(\partial_{x^i} \varphi)\right| \lesssim \epsilon.
\end{equation}

\end{enumerate}
\end{lemma}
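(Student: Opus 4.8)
The plan is to estimate $Y^\alpha(\varphi)$ and $Y^\alpha(\partial_{x^i}\varphi)$ by integrating along the characteristics of $T_\phi$ the transport equations that these quantities satisfy, closing everything by an induction on $|\alpha|$ carried out simultaneously over all $\varphi\in\mathcal{M}$, together with a Gronwall argument for the top--order terms. Recall that each $\varphi\in\mathcal{M}$ solves, with vanishing data at $t=0$, an equation of the schematic form $T_\phi(\varphi)=\mu t\,\partial_x Z(\phi)$ with $Z\in\Gamma$ (the $-2\phi$ correction occurring for $\sigma^k$ is harmless and treated identically), so that $Y^\alpha(\varphi)$ and $Y^\alpha(\partial_{x^i}\varphi)$ also vanish at $t=0$ (since the $Y$'s degenerate to $\partial_{x,v}$--derivatives there). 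Since no vector field of $\gamma_{m,s}$ differentiates $t$ and $\partial_x Z(\phi)$ is independent of $v$, all the $x$-- and $Y$--derivatives of this source are controlled, through the Poisson equation, by the pointwise bounds on $\nabla Z^\beta\phi$ and on its $x$--derivatives $|\partial_x^k\nabla Z^\beta\phi|\lesssim \epsilon\,(1+t)^{-1/2}(1+|x|+t)^{-3/2-k}$ established in the previous subsections; the gain of one power of $(1+|x|+t)$ per $\partial_x$, which itself traces back to the improved spatial decay of $\rho(\partial_x^\alpha f)$, is what makes the estimates close.

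For \eqref{ibas2} I commute with $Y^\alpha$, writing $T_\phi(Y^\alpha\varphi)=Y^\alpha(T_\phi\varphi)+[T_\phi,Y^\alpha]\varphi$. For the source term $Y^\alpha(T_\phi\varphi)=t\,Y^\alpha(\partial_x Z\phi)$, Lemma \ref{lem:tynzp} replaces $Y^\alpha$ by $Z^\alpha$ up to $\tfrac1t P(\bar\varphi)Z^\beta$ corrections; using the pointwise decay of $\nabla Z^\beta\phi$ for $|\beta|\le|\alpha|+1\le N-(9/2+1)$ and the bootstrap bound \eqref{bas2} on the coefficients in $P(\bar\varphi)$, this is $\lesssim \epsilon(1+s)^{-1}$ plus time--integrable lower order terms, whose time integral contributes the logarithm. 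For the commutator, Lemma \ref{lem:gcfm} expands $[T_\phi,Y^\alpha]\varphi$ into terms $P^{\alpha,i}_{d\gamma\beta}(\bar\varphi)\,\partial_{x^i}Z^\gamma(\phi)\,Y^\beta(\varphi)$ with $|\gamma|,|\beta|\le|\alpha|$ and signature $k\le|\alpha|-1$, $k+|\gamma|+|\beta|\le|\alpha|+1$: when $|\beta|<|\alpha|$ one uses the induction hypothesis on $Y^\beta(\varphi)$, the bound \eqref{bas2} on the coefficients of $P$, and $|\partial_{x^i}Z^\gamma\phi|\lesssim \epsilon(1+s)^{-2}$ to obtain a time--integrable contribution, whereas when $|\beta|=|\alpha|$ the constraints force $|\gamma|\le 1$ and $k\le 1$, so the coefficient multiplying $Y^\alpha(\varphi)$ is bounded by $\epsilon\,(1+\log(1+s))^{d}(1+s)^{-2}$, which is time--integrable. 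Integrating along the characteristic through an arbitrary $(t,x,v)$ and applying Gronwall with this integrable kernel yields $|Y^\alpha(\varphi)(t,x,v)|\lesssim \epsilon(1+\log(1+t))$, which improves \eqref{bas2}.

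For \eqref{ibas3} I first use Lemma \ref{lem:cdxy} to write $Y^\alpha(\partial_{x^i}\varphi)=\partial_{x^i}(Y^\alpha\varphi)+\sum P(\partial_x\bar\varphi)\,Y^\beta(\partial_{x^j}\varphi)$ with, on the right, total order strictly below $|\alpha|$, so those terms are handled by the induction hypothesis together with the bound on the coefficients of $P(\partial_x\bar\varphi)$. It then remains to bound $\partial_{x^i}(Y^\alpha\varphi)$, which (with zero data) solves $T_\phi(\partial_{x^i}Y^\alpha\varphi)=\partial_{x^i}(T_\phi Y^\alpha\varphi)-\mu\sum_k\partial_{x^i}\partial_{x^k}\phi\,\partial_{v^k}(Y^\alpha\varphi)$. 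The point is that every term now carries one more $x$--derivative than in the previous step: the source $\partial_{x^i}(T_\phi Y^\alpha\varphi)$ involves $\partial_x^2 Z^\beta\phi$ rather than $\partial_x Z^\beta\phi$, hence gains an extra factor $(1+s)^{-1}$ and is $\lesssim \epsilon(1+s)^{-2}$, time--integrable with \emph{no} logarithm; and in the second term one rewrites $\partial_{v^k}(Y^\alpha\varphi)=Y_k(Y^\alpha\varphi)-t\,\partial_{x^k}(Y^\alpha\varphi)+\sum_l\Phi^l_k\partial_{x^l}(Y^\alpha\varphi)$ exactly as in the proof of Proposition \ref{es:ksmc}, and the improved bound $|\partial_{x^i}\partial_{x^k}\phi|\lesssim\epsilon(1+s)^{-1/2}(1+|x|+s)^{-5/2}$ makes each of the three resulting products time--integrable, the $t\,\partial_{x^k}(Y^\alpha\varphi)$ contribution producing precisely an integrable Gronwall kernel for $\partial_x(Y^\alpha\varphi)$; note here that $Y_k Y^\alpha\varphi$ has order $|\alpha|+1\le N-(9/2+2)$, still within range of \eqref{bas2}. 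Integrating along characteristics and applying Gronwall gives $|\partial_{x^i}(Y^\alpha\varphi)(t,x,v)|\lesssim\epsilon$, which is \eqref{ibas3}.

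I expect the bookkeeping at the top order $|\alpha|=N-(9/2+2)$ (resp.\ $N-(9/2+3)$) to be the only genuinely delicate point: one must verify that in both the commutator expansion of Lemma \ref{lem:gcfm} and the identity of Lemma \ref{lem:cdxy} a coefficient $Y^\beta(\varphi)$ or $Y^\beta(\partial_x\varphi)$ of \emph{maximal} order $|\beta|=|\alpha|$ can only occur multiplied by a factor that is already $O(\epsilon)$ and integrable in $t$ --- which is exactly what the signature constraints enforce --- so that the coupling between the different elements of $\mathcal{M}$ and between consecutive orders closes through a single Gronwall inequality, without losing a derivative (this is why the Poisson equation is never commuted more than $N$ times) or a power of $t$. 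As in Proposition \ref{prop:ep}, any residual $(1+t)^\sigma$ loss from the top order is absorbed by choosing the auxiliary integrability exponent $\sigma$ small enough, and the sharpening of the constant from $\epsilon^{1/2}$ in \eqref{bas2},\eqref{bas3} to $\epsilon$ follows because every contributing term is at least quadratic in the small quantities, once $\epsilon$ is chosen small enough to absorb the fixed constants.
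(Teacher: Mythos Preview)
Your argument is correct and follows the same overall scheme as the paper --- commute, estimate source plus commutator pointwise, and integrate along characteristics of $T_\phi$ --- but you have built in more machinery than is actually needed.

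The induction on $|\alpha|$ and the Gronwall step are superfluous. In the commutator expansion $[T_\phi,Y^\alpha]\varphi=\sum P(\bar\varphi)\,\partial_{x^i}Z^\gamma\phi\,Y^\beta\varphi$, the index constraints $k\le|\alpha|-1$ and $|\beta|\le|\alpha|\le N-(9/2+2)$ place \emph{every} factor $Y^\rho(\varphi)$ (both inside $P$ and the final $Y^\beta\varphi$) within the range of the bootstrap assumption \eqref{bas2}. So one simply bounds all of these by $\epsilon^{1/2}(1+\log(1+t))$ and the $\phi$-factor by the already improved estimate $|\partial_{x^i}Z^\gamma\phi|\lesssim\epsilon/(1+t)^2$; the whole commutator is then $\lesssim\epsilon(1+\log(1+t))^{|\alpha|+2}/(1+t)^2$, integrable in $t$, with no need to distinguish $|\beta|<|\alpha|$ from $|\beta|=|\alpha|$. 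The $\epsilon$ (rather than $\epsilon^{1/2}$) comes entirely from the $\phi$-factor, not from inductively improved $\varphi$-bounds.

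For \eqref{ibas3} the paper takes a shorter route than your commutation of $\partial_{x^i}$ with both $Y^\alpha$ and $T_\phi$: since $\partial_{x^i}\in\gamma_m$, one regards $Y^\alpha\partial_{x^i}$ as $Y^{\alpha'}$ with $|\alpha'|\le N-(9/2+2)$ and reruns the proof of \eqref{ibas2} verbatim. The only difference is in the source term, where the extra $\partial_{x^i}$ is converted via $\partial_{x^i}\phi=\tfrac{1}{t}(t\partial_{x^i})\phi$ with $t\partial_{x^i}\in\Gamma$, so that $|tZ^\alpha\nabla Z\partial_{x^i}\phi|\lesssim\epsilon/(1+t)^2$ is now integrable and no logarithm appears. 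Your approach of course also works, but the rewriting of $\partial_{v^k}(Y^\alpha\varphi)$ and the resulting Gronwall kernel are avoidable.

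Finally, the remarks in your last paragraph about a residual $(1+t)^\sigma$ loss and Proposition~\ref{prop:ep} are out of place here: those arise only for the $L^1_{x,v}$ product estimates at the top order $N$, whereas the present lemma lives at order $\le N-(9/2+2)$, where everything is controlled pointwise by the bootstrap.
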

\begin{proof}
Using the commutation formula \eqref{eq:gcfm}, we have 
$$
[T_\phi,Y^\alpha ](\varphi)= \sum_{d=0}^{|\alpha|+1}\sum_{i=1}^n \sum_{\substack{|\gamma| \le |\alpha|,\\ |\beta| \le |\alpha|}} P^{\alpha,i}_{d \gamma \beta}(\bar{\varphi}) \partial_{x^i} Z^\gamma(\phi) Y^\beta(\varphi),
$$
where the $P^{\alpha,i}_{d \gamma \beta}(\bar{\varphi})$ are multilinear forms of degree $d$ and signature less than $k$ such that $k \le |\alpha|-1 \le N-1$ and $k+|\gamma|+|\beta| \le |\alpha|+1 \le N+1$. Given the range of the indices, we can estimates all terms on the right-hand side pointwise and find that 
$$
\left| [T_\phi,Y^\alpha ](\varphi) \right| \lesssim \frac{\left( 1+\log(1+t) \right)^{|\alpha|+2} \epsilon }{t^2},
$$
which is integrable in $t$. 
On the other hand, we have $Y^\alpha T_\phi(\varphi)=Y^\alpha t \nabla Z(\phi).$ Using Lemma \ref{lem:tynzp}, it follows that 
$$Y^\alpha T_\phi(\varphi)=t Z^\alpha \nabla Z \phi +\sum_{d=1}^{|\alpha|} P^\alpha_{d\beta} ( \bar{\varphi} ) Z^\beta \nabla Z \phi,$$
where the second term is integrable in $t$, while the first term satisfied only the weak bound, for all 
$$
| t Z^\alpha \nabla Z \phi | \lesssim \frac{\epsilon}{1+t}.
$$
Since any solution to $T_\phi(g)=h$ with $0$ initial data satisfies, for all $t > 0$, and for all $x,v \in \mathbb{R}^n_x \times \mathbb{R}^n_v$, $$|g|(t,x,v) \lesssim \int_0^t \sup_{(x',v')\in \mathbb{R}^n_x \times \mathbb{R}^n_v} |h(s,x,v)|ds,$$ \eqref{ibas2} follows.  \eqref{ibas3} can be obtained simarly, using that 
$$
| t Z^\alpha \nabla Z \partial_{x^i} \phi | \lesssim \frac{\epsilon}{t^2}.
$$
since $\partial_{x^i} \phi= \frac{1}{t} \left( t \partial_{x^i} \right)(\phi)$ with $t \partial_{x^i} \in \Gamma$.
\end{proof}

\bibliographystyle{hacm}
\bibliography{refs}

\end{document}